\pgfplotsset{compat=1.15}
\newcommand{\footremember}[2]{%
    \footnote{#2}
    \newcounter{#1}
    \setcounter{#1}{\value{footnote}}%
}
\def\R{{\mathbb R}}
\def\C{{\mathbb C}}
\def\N{{\mathbb N}}
\DeclareMathOperator{\rank}{rank}
\DeclareMathOperator{\diag}{diag}
\newtheorem{theorem}{\bf Theorem}
\newtheorem{lemma}{\bf Lemma}
\newtheorem{example}{\bf Example}
\newtheorem{corollary}{\bf Corollary}
\newtheorem{remark}{\bf Remark}
\providecommand{\keywords}[1]
{
  \textbf{\textbf{Keywords:}} #1
}
\begin{document}
\definecolor{qqzzff}{rgb}{0,0.6,1}
\definecolor{ududff}{rgb}{0.30196078431372547,0.30196078431372547,1}
\definecolor{xdxdff}{rgb}{0.49019607843137253,0.49019607843137253,1}
\definecolor{ffzzqq}{rgb}{1,0.6,0}
\definecolor{qqzzqq}{rgb}{0,0.6,0}
\definecolor{ffqqqq}{rgb}{1,0,0}
\definecolor{uuuuuu}{rgb}{0.26666666666666666,0.26666666666666666,0.26666666666666666}
\newcommand{\vi}[1]{\textcolor{blue}{#1}}
\newif\ifcomment
\commentfalse
\commenttrue
\newcommand{\comment}[3]{%
\ifcomment%
	{\color{#1}\bfseries\sffamily#3%
	}%
	\marginpar{\textcolor{#1}{\hspace{3em}\bfseries\sffamily #2}}%
	\else%
	\fi%
}

\newcommand{\mapr}[1]{{{\color{blue}#1}}}
\newcommand{\revise}[1]{{{\color{blue}#1}}}

\title{Exact polynomial optimization strengthened with Fritz John conditions}


\author{%
Ngoc Hoang Anh Mai\footremember{1}{University of Konstanz,
Universit\"atsstra{\ss}e 10, D-78464 Konstanz, Germany.}
  }

\maketitle

\begin{abstract}
Let $f,g_1,\dots,g_m$ be polynomials with real coefficients in a vector of variables $x=(x_1,\dots,x_n)$.
Denote by $\text{diag}(g)$ the diagonal matrix with coefficients $g=(g_1,\dots,g_m)$ and denote by $\nabla g$ the Jacobian of $g$.
Let $C$ be the set of critical points defined by
\begin{equation}
    C=\{x\in\mathbb R^n\,:\,\text{rank}(\varphi(x))< m\}\quad\text{with}\quad\varphi:=\begin{bmatrix}
\nabla g\\
\text{diag}(g)
\end{bmatrix}\,.
\end{equation}
Assume that the image  of $C$ under $f$, denoted by $f(C)$, is empty or finite.
(Our assumption holds generically since $C$ is empty in a Zariski open set in the space of the coefficients of $g_1,\dots,g_m$ with given degrees.)
We provide a sequence of values, which returned by semidefinite programs, finitely converges to the minimal value attained by $f$ over the basic semi-algebraic set $S$ defined by
\begin{equation}
    S:=\{x\in\mathbb R^n\,:\,g_j(x)\ge 0\,,\,j=1,\dots,m\}\,.
\end{equation}
Consequently, we can precisely compute the minimal value of any polynomial with real coefficients in $X$ over one of the following sets: the unit ball, the unit hypercube, and the unit simplex.
Under a slightly more general assumption, we extend this result to the minimization of any polynomial over a basic convex semi-algebraic set that has a non-empty interior and is defined by the inequalities of concave polynomials.
\end{abstract}
\keywords{sum-of-squares; Nichtnegativstellensatz; gradient ideal; Fritz John conditions; polynomial optimization}
\tableofcontents
\section{Introduction}
The study of non-negative polynomials is of interest in real algebraic geometry with applications in polynomial optimization.
In his seminal paper \cite{hilbert1888darstellung}, Hilbert studied the expression of non-negative polynomials as sums of squares of polynomials.
We call Positivstellens\"atze the representations of polynomials positive on a basic semi-algebraic set, a set defined by a system of polynomial inequalities.
Putinar showed in \cite{putinar1993positive} a Positivstellensatz that each polynomial positive on a compact basic semi-algebraic set satisfying the so-called Archimedean condition (stated below) can be decomposed as a linear combination of polynomials defining this basic semi-algebraic set with weights which are sums of squares of polynomials.
Using Putinar's Positivstellensatz, Lasserre introduced in \cite{lasserre2001global} a sequence of values returned by semidefinite programs (also known as Lasserre's hierarchy) to approximate from below as closely as desired the minimal value of a polynomial over a basic semi-algebraic set.

We refer to Nichtnegativstellens\"atze as the representations of polynomials that are non-negative on a basic semi-algebraic set.
They allow us to obtain a sequence of values returned by relaxation programs similar in spirit to Lasserre's hierarchy that converges finitely to the optimal value for a given polynomial optimization problem.
However, not all Nichtnegativstellens\"atze have clearly practical applications.
For instance, the relaxation programs based on Krivine--Stengle's Nichtnegativstellens\"atze \cite{krivine1964anneaux} are not convex, and hence it is hard to obtain the values returned by such programs.
It is because Krivine--Stengle's Nichtnegativstellens\"atze has non-prescribed denominators.
In other words, the corresponding relaxation programs are convex, namely semidefinite programs, if Nichtnegativstellensatz has no denominator or has a prescribed denominator.
We refer the readers to recent Nichtnegativstellens\"atze without denominators by Scheiderer \cite{scheiderer2000sums,scheiderer2003sums,scheiderer2006sums} for some compact basic semi-algebraic sets of low dimensions.
His works involve the non-strict extension of Schm\"udgen's Positivstellensatz \cite{schmudgen1991thek} originally stated that each polynomial positive on a compact basic semi-algebraic set can be written as a linear combination of products of polynomials defining this set with weights which are sums of squares of polynomials.

In this paper we provide some Nichtnegativstellens\"atze without denominators which have the same forms as Putinar's and Schm\"udgen's Positivstellensatz.
To achieve this, we utilize slack variables and additional polynomial equations for a given basic semi-algebraic set.
These polynomial equations, generated by the input polynomials and their gradients, involve the so-called Fritz John conditions.

Let $\R[x]$ denote the ring of polynomials with real coefficients in the vector of variables $x$.
Given $f,g_1,\dots,g_m\in\R[x]$, consider polynomial optimization problem
\begin{equation}\label{eq:pop}
    f^\star:=\inf\limits_{x\in S(g)} f(x)\,,
\end{equation}
where $S(g)$ is the basic semi-algebraic set associated with $g=(g_1,\dots,g_m)$, i.e.,
\begin{equation}
    S(g):=\{x\in\R^n\,:\,g_j(x)\ge 0\,,\,j=1,\dots,m\}\,.
\end{equation}
Given $p\in\R[x]$, we denote by $\nabla p$ the gradient of $p$, i.e., $\nabla p=(\frac{\partial p}{\partial x_1},\dots,\frac{\partial p}{\partial x_n})$.
We state the Fritz John conditions in the following lemma:
\begin{lemma}\label{lem:FJ}
Let $f,g_1,\dots,g_m\in\R[x]$. If $u$ is a  local minimizer for problem \eqref{eq:pop}, then the Fritz John conditions hold for problem \eqref{eq:pop} at $u$, i.e., the following conditions hold:
\begin{equation}
    \begin{cases}
    		\exists (\lambda_0,\dots,\lambda_m)\in [0,\infty)^{m+1}\,:\\
          \lambda_0 \nabla f(u)=\sum_{j=1}^m \lambda_j \nabla g_j(u)\,,\\
          \lambda_j g_j(u) =0\,,\,j=1,\dots,m\,,\\
          \sum_{j=0}^m \lambda_j^2=1
    \end{cases}
   \Leftrightarrow
       \begin{cases}
       \exists (\lambda_0,\dots,\lambda_m)\in \R^{m+1}\,:\\
          \lambda_0^2 \nabla f(u)=\sum_{j=1}^m \lambda_j^2 \nabla g_j(u)\,,\\
          \lambda_j^2 g_j(u) =0\,,\,j=1,\dots,m\,,\\
          \sum_{j=0}^m \lambda_j^2=1\,.
    \end{cases}
\end{equation}
\end{lemma}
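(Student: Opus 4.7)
My plan is to split the proof into two parts: first, establishing the classical Fritz John necessary conditions (the left-hand system), and second, verifying the purely algebraic equivalence between that system and its ``squared'' reformulation on the right.

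For the first part, I would use a standard penalty argument. Picking $r>0$ small enough that $u$ minimizes $f$ on $S(g)\cap\{\|x-u\|\le r\}$, I would consider the unconstrained problem
\[
 \min_{\|x-u\|\le r}\; f(x) + k \sum_{j=1}^m \max(-g_j(x),0)^2 + \|x-u\|^2,
\]
let $x_k$ be a minimizer, and observe via the coercive penalty that $x_k\to u$ and $\max(-g_j(x_k),0)\to 0$. Setting $\lambda_0^{(k)}:=1$ and $\lambda_j^{(k)}:=2k\max(-g_j(x_k),0)\ge 0$ converts the first-order optimality relation at $x_k$ into the stationarity equation plus a vanishing perturbation $2(x_k-u)$. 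Dividing the multiplier vector by its Euclidean norm $N_k$ and extracting a convergent subsequence produces non-negative limits $(\lambda_0,\dots,\lambda_m)$ on the unit sphere. Complementary slackness follows because $\lambda_j^{(k)}=0$ whenever $g_j(x_k)>0$, which holds eventually at every inactive index. One could alternatively invoke Gordan's theorem of the alternative applied to $\nabla f(u)$ together with the gradients of the active constraints.

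For the second part, the equivalence is a straightforward rescaling. In the direction $(\Rightarrow)$, given non-negative $(\lambda_j)$ satisfying the left-hand system with $\sum_j\lambda_j^2=1$, set $t:=\sum_j\lambda_j>0$ and define $\mu_j:=\sqrt{\lambda_j/t}\in\R$; then $\mu_j^2=\lambda_j/t$, so dividing the stationarity equation and each complementary slackness relation by $t$ yields the right-hand system, with $\sum_j\mu_j^2=1$ automatic. In the direction $(\Leftarrow)$, given real $(\lambda_j)$ satisfying the right-hand system, set $\tau:=(\sum_j\lambda_j^4)^{1/2}>0$ and $\mu_j:=\lambda_j^2/\tau\ge 0$; the right-hand conditions then rearrange exactly into the left-hand system.

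The main subtlety I anticipate is handling normalization: both the limiting step in the penalty argument and the two rescalings in the algebraic equivalence demand nonzero denominators, which are guaranteed in every case by the constraint $\sum_j\lambda_j^2=1$ ruling out a zero multiplier vector. The Fritz John part itself is classical and could simply be quoted; the substantive content of the lemma for this paper is the squared reformulation, as it converts inequality constraints on the multipliers into polynomial equations and thereby paves the way for the sum-of-squares treatment developed in the sequel.
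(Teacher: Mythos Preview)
Your proposal is correct. The paper itself does not prove this lemma: it simply cites Fritz John's original paper \cite{jhon1948extremum} and Freund's lecture notes \cite[Theorem~10]{freund2004optimality} for the classical Fritz John conditions, and leaves the equivalence with the squared reformulation implicit. Your write-up therefore goes further than the paper by supplying both a standard penalty-based derivation and an explicit rescaling argument for the $\Leftrightarrow$; the latter is the only part with any content specific to this paper, and your treatment of it (via $\mu_j=\sqrt{\lambda_j/t}$ in one direction and $\mu_j=\lambda_j^2/\tau$ in the other) is sound.
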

In the final conditions of the two sides, the sequence of multipliers $\lambda_j$ is normalized to ensure that the multipliers are not all zeros.

Fritz John derived his conditions in \cite{jhon1948extremum}. 
A proof for Lemma \ref{lem:FJ} can be found in Freund's lecture note \cite[Theorem 10]{freund2004optimality}.

If $\lambda_0>0$, the Fritz John conditions are equivalent to the Karush--Kuhn--Tucker conditions (see \cite{karush1939minima,kuhn1951w}).
Note that there are some cases of problem \eqref{eq:pop} (indicated below) for which the Karush--Kuhn--Tucker conditions do not hold at any global minimizer.

Denote by $\Sigma^2[x]$ the cone of sum of squares of polynomials in $\R[x]$.
Given $g_1,\dots,g_m\in\R[x]$, let $Q(g)[x]$ be the quadratic module associated with $g=(g_1,\dots,g_m)$, i.e.,
\begin{equation}
    Q(g)[x]:=\Sigma^2[x] +\sum_{j=1}^m g_j\Sigma^2[x]\,.
\end{equation}
We say that $S(g)$ satisfies the Archimedean condition if there exists $R>0$ such that $R-x_1^2-\dots-x_n^2\in Q(g)[x]$.
Under the Archimedean condition and some standard optimality conditions (containing the Karush--Kuhn--Tucker conditions), Nie utilizes Marshall's Nichtnegativstellensatz \cite{marshall2009representations,marshall2006representations} to guarantee in \cite{nie2014optimality} finite convergence of Lasserre's hierarchy. 
In this case, the polynomial optimization problem necessarily has finite global minimizers.

Given $g_1,\dots,g_m\in\R[x]$, with $g=(g_1,\dots,g_m)$, let $\Pi g$ be the vector of products of $g_1,\dots,g_m$ defined by
\begin{equation}\label{eq:prod.g}
\Pi g:=(g^\alpha)_{\alpha\in\{0,1\}^m\backslash \{0\}}\,,
\end{equation}
where $\alpha=(\alpha_1,\dots,\alpha_m)$ and $g^\alpha:=g_1^{\alpha_1}\dots g_m^{\alpha_m}$.
We call $Q(\Pi g)[x]$ the preordering generated by $g$, denoted by $P(g)[x]$.
Obviously, if $m=1$, it holds that $P(g)[x]=Q(g)[x]$.

Given $h_1,\dots,h_l\in\R[x]$, let $V(h)$ be the variety defined by $h=(h_1,\dots,h_l)$, i.e.,
\begin{equation}
V(h):=\{x\in\R^n\,:\,h_j(x)=0\,,\,j=1,\dots,l\}\,.
\end{equation}
and let $I(h)[x]$ be the ideal generated by $h$, i.e.,
\begin{equation}
    I(h)[x]:= \sum_{j=1}^l h_j \R[x]\,.
\end{equation}
The real radical of an ideal $I(h)[x]$, denoted by $\sqrt[\R]{I(h)}$, is defined as
\begin{equation}
{\sqrt[\R]{I(h)[x]}}=\{f\in\R[x]\,:\,\exists m\in \N\,:\,-f^{2m}\in\Sigma^2[x]+I(h)[x]\}\,.
\end{equation}
Krivine--Stengle's Nichtnegativstellens\"atze \cite{krivine1964anneaux} imply that
\begin{equation}\label{eq:real.radi}
\sqrt[\R]{I(h)[x]}=\{p\in\R[x]\,:\,p=0\text{ on }V(h)\}\,.
\end{equation}
We say that $I(h)[x]$ is real radical if $I(h)[x]=\sqrt[\R]{I(h)[x]}$.

Demmel, Nie, and Powers provide in \cite{demmel2007representations} a Nichtnegativstellensatz saying that if $f$ is non-negative on a subset of $S(g)$ at which the Karush--Kuhn--Tucker conditions hold for problem \eqref{eq:pop}, then there exists $q\in P(g)[x,\lambda] $ such that $f-q$ vanishes on $V(h_\text{KKT})$, where $\lambda:=(\lambda_1,\dots,\lambda_m)$ and
\begin{equation}
    h_\text{KKT}:=(\nabla f-\sum_{j=1}^m \lambda_j \nabla g_j,\lambda_1g_1,\dots,\lambda_mg_m)\,.
\end{equation}
Here $h_\text{KKT}$ includes polynomials in the Karush--Kuhn--Tucker conditions.
To apply this Nichtnegativstellensatz for exact polynomial optimization, they assume the Karush--Kuhn--Tucker conditions hold at some global minimizer.

Our goal is to deal with the case of problem \eqref{eq:pop} for which the Karush--Kuhn--Tucker conditions do not hold at any global minimizer or the set of global minimizers has a positive dimension.
Given $g_1,\dots,g_m\in\R[x]$, let  $\varphi^g:\R^{n}\to \R^{(n+m)\times m}$ be a function associated with $g=(g_1,\dots,g_m)$ defined by
\begin{equation}
\varphi^g(x)=\begin{bmatrix}
\nabla g(x)\\
\diag(g(x))
\end{bmatrix}=
    \begin{bmatrix}
    \nabla g_1(x)& \dots& \nabla g_m(x)\\
    g_1(x)&\dots&0\\
    .&\dots&.\\
    0&\dots&g_m(x)
    \end{bmatrix}\,.
\end{equation}
Given a real matrix $A$, we denote by $\rank(A)$ the dimension of the vector space generated by the columns of $A$ over $\R$.
We say that a set $\Omega$ is finite if its cardinal number is a non-negative integer.
Let  $C(g)$ be the set of critical points associated with $g$ defined by
\begin{equation*}
    C(g):=\{x\in\R^n\,:\,\rank(\varphi^g(x))< m\}.
\end{equation*}
It is easily seen that $C(g)$ is the set of points at which the Fritz John conditions stated in Lemma \ref{lem:FJ} hold for problem \eqref{eq:pop} in the case of $\lambda_0=0$.
In other words, $C(g)$ is the set of all points at which the Fritz John conditions hold, but the Karush--Kuhn--Tucker conditions do not.
From this, the following lemma follows:
\begin{lemma}\label{lem:empty.Cg}
If $C(g)=\emptyset$, problem \eqref{eq:pop} has no local minimizer or only has local minimizers at which the Karush--Kuhn--Tucker conditions for this problem hold.
\end{lemma}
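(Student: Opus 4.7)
The plan is to prove the contrapositive: if problem \eqref{eq:pop} has a local minimizer at which the Karush--Kuhn--Tucker conditions fail, then $C(g)\neq\emptyset$. The argument will consist of unpacking the Fritz John conditions from Lemma \ref{lem:FJ} in the case $\lambda_0=0$ and matching them with the rank-deficiency condition defining $C(g)$.

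First I would fix a local minimizer $u$ and apply Lemma \ref{lem:FJ} to obtain multipliers $(\lambda_0,\dots,\lambda_m)\in[0,\infty)^{m+1}$ with $\sum_{j=0}^m \lambda_j^2=1$ satisfying $\lambda_0\nabla f(u)=\sum_{j=1}^m \lambda_j\nabla g_j(u)$ and $\lambda_j g_j(u)=0$ for every $j$. Splitting into the two cases $\lambda_0>0$ and $\lambda_0=0$ is natural: in the first case, rescaling the multipliers by $1/\lambda_0$ immediately yields the Karush--Kuhn--Tucker conditions at $u$, so there is nothing left to prove.

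The substantive step is the case $\lambda_0=0$. Here the normalization forces $(\lambda_1,\dots,\lambda_m)\neq 0$, and the remaining Fritz John equations read $\sum_{j=1}^m \lambda_j\nabla g_j(u)=0$ together with $\lambda_j g_j(u)=0$ for all $j$. I would interpret these equations as a non-trivial linear relation among the columns of $\varphi^g(u)$: recalling that the $j$-th column of $\varphi^g(u)$ is $(\nabla g_j(u)^\top,\,g_j(u)\,e_j^\top)^\top$ where $e_j$ is the $j$-th standard basis vector of $\R^m$, the two Fritz John identities say exactly that $\varphi^g(u)(\lambda_1,\dots,\lambda_m)^\top=0$. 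Hence $\rank(\varphi^g(u))<m$, so $u\in C(g)$.

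Combining the cases: either $u$ satisfies the Karush--Kuhn--Tucker conditions, or $u\in C(g)$. Contrapositively, when $C(g)=\emptyset$, every local minimizer satisfies KKT. The only mild subtlety is verifying the column-by-column correspondence in the definition of $\varphi^g$, which is purely notational; no analytic difficulty arises, since the heavy lifting (deriving Fritz John multipliers) is already delivered by Lemma \ref{lem:FJ}.
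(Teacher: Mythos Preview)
Your argument is correct and is precisely the reasoning the paper intends: the paper does not give a formal proof but simply observes, just before the lemma, that $C(g)$ coincides with the set of points admitting Fritz John multipliers with $\lambda_0=0$, so the lemma follows immediately from Lemma~\ref{lem:FJ}. Your write-up spells out that observation (the dichotomy $\lambda_0>0$ vs.\ $\lambda_0=0$, the rescaling to KKT, and the kernel interpretation $\varphi^g(u)(\lambda_1,\dots,\lambda_m)^\top=0$) in full detail.
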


We state the first main result in the following theorem:
\begin{theorem}\label{theo:rep}
Let $f,g_1,\dots,g_m\in\R[x]$. 
Assume that $f$ is non-negative on $S(g)$  with $g:=(g_1,\dots,g_m)$ and $f(C(g))$ is finite. 
Then there exists $q\in P(g)[x,\bar \lambda]$ such that $f-q$ vanishes on $V(h_\text{FJ})$, where $\bar\lambda:=(\lambda_0,\dots,\lambda_m)$ and
\begin{equation}\label{eq:.polyFJ}
    h_\text{FJ}:=(\lambda_0\nabla f-\sum_{j=1}^m \lambda_j \nabla g_j,\lambda_1g_1,\dots,\lambda_mg_m,1-\sum_{j=0}^m\lambda_j^2)\,.
\end{equation}
Moreover, if $S(g)$ satisfies the Archimedean condition, we can take $q\in Q(g)[x,\bar \lambda]$.
\end{theorem}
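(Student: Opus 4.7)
The plan is to reduce Theorem~\ref{theo:rep} to the Demmel--Nie--Powers (DNP) Nichtnegativstellensatz for the KKT conditions, using the hypothesis that $f(C(g))$ is finite to cover the gap opened by allowing $\lambda_0=0$ in the Fritz John conditions. I would first decompose the real variety as $V_\R(h_\text{FJ})=V_0\cup V_+$, where $V_0:=V_\R(h_\text{FJ})\cap\{\lambda_0=0\}$ and $V_+:=V_\R(h_\text{FJ})\setminus V_0$. On $V_0$ the surviving Fritz John equations reduce (together with the normalization) to the rank-deficiency condition defining $C(g)$, so the $x$-projection of $V_0$ lies in $C(g)$ and $f(V_0)\subseteq\{a_1,\dots,a_k\}:=f(C(g))$. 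On $V_+$ the rational substitution $\mu_j:=\lambda_j/\lambda_0$ turns the Fritz John system into the KKT system, embedding the image of $V_+$ into $V(h_\text{KKT})$.

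I would then build polynomial representatives on each piece and glue. On $V_+$, applying DNP produces $q_+\in P(g)[x,\mu]$ with $f-q_+\in I(h_\text{KKT})$; substituting $\mu_j=\lambda_j/\lambda_0$ and multiplying by a sufficiently high even power of $\lambda_0$, then absorbing that power into the sum-of-squares coefficients, yields $\tilde q_+\in P(g)[x,\bar\lambda]$ with $f-\tilde q_+$ vanishing on $V_+$. On $V_0$, since $f$ takes only the finitely many values $a_1,\dots,a_k$, the Lagrange interpolants $\ell_i(f):=\prod_{j\ne i}(f-a_j)/(a_i-a_j)$ let me write a polynomial $\tilde q_0\in P(g)[x,\bar\lambda]$ that agrees with $f$ on each level stratum $V_0\cap\{f=a_i\}$; non-negative $a_i$'s are handled by constant sums of squares, while negative $a_i$'s (which can only occur at $x\notin S(g)$) are absorbed by multiplication with suitable $g^\alpha$-factors that change sign on the stratum. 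To glue, I exploit the partition-of-unity identity $\lambda_0^2+\sum_{j\ge 1}\lambda_j^2\equiv 1$ modulo $I(h_\text{FJ})$: the candidate $q:=\lambda_0^2\tilde q_++\sum_{j\ge 1}\lambda_j^2\tilde q_0$ has weights summing to $1$ on $V_\R(h_\text{FJ})$ and each weight is manifestly a sum of squares, so $q\in P(g)[x,\bar\lambda]$; the residual obstruction $\sum_{j\ge 1}\lambda_j^2(\tilde q_+-\tilde q_0)$ on $V_+$ is then iteratively corrected by ideal combinations until $f-q$ vanishes on all of $V_\R(h_\text{FJ})$. For the Archimedean refinement, Putinar's Positivstellensatz replaces Schm\"udgen's at each invocation, and the bounded slack variables $\bar\lambda$---bounded by the equation $\sum_{j=0}^m\lambda_j^2=1$ already in $h_\text{FJ}$---propagate the Archimedean condition to $\R[x,\bar\lambda]$, so we can take $q\in Q(g)[x,\bar\lambda]$.

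The main obstacle I expect is the gluing: the DNP-pullback introduces high powers of $\lambda_0$, and the iterative correction against the relation $\lambda_0^2+\sum_{j\ge 1}\lambda_j^2-1$ must preserve membership in the preordering while forcing the residue $\sum_{j\ge 1}\lambda_j^2(\tilde q_+-\tilde q_0)$ to vanish on $V_+$. This is where closure of $P(g)$ under products of the $g^\alpha$'s---the feature that distinguishes $P(g)$ from the quadratic module $Q(g)$---is essential, and it also explains why the non-Archimedean half of the theorem requires the full preordering rather than only the module.
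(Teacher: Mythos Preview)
Your plan has two genuine gaps, and the paper's proof avoids both by \emph{not} splitting $V(h_\text{FJ})$ into $V_0$ and $V_+$.

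\textbf{Gap 1: the DNP pullback does not produce a polynomial representative on $V_+$.} From DNP you obtain $q_+\in P(g)[x,\mu]$ with $f=q_+$ on $V(h_\text{KKT})$ (vanishing on the variety, not ideal membership). Substituting $\mu_j=\lambda_j/\lambda_0$ gives the \emph{rational} identity $f=q_+(x,\lambda/\lambda_0)$ on $V_+$. Clearing denominators by $\lambda_0^{2N}$ yields a polynomial $\tilde q_+:=\lambda_0^{2N}q_+(x,\lambda/\lambda_0)\in P(g)[x,\bar\lambda]$, but the resulting identity on $V_+$ is $\lambda_0^{2N}f=\tilde q_+$, not $f=\tilde q_+$. ``Absorbing the power into the SOS coefficients'' moves $\lambda_0^{2N}$ into $q_+$, not into $f$; the factor on $f$ does not disappear. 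Using $\lambda_0^2=1-\sum_{j\ge1}\lambda_j^2$ to trade $\lambda_0^{2N}f$ for $f$ produces a remainder $(1-\lambda_0^{2N})f$, which is an SOS times $f$, and $f$ is not in $P(g)$ off $S(g)$.

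\textbf{Gap 2: the partition-of-unity gluing does not close.} Even granting polynomials $\tilde q_+,\tilde q_0\in P(g)[x,\bar\lambda]$ with $f=\tilde q_+$ on $V_+$ and $f=\tilde q_0$ on $V_0$, the weights $\lambda_j^2$ for $j\ge1$ do \emph{not} vanish on $V_+$, so $q:=\lambda_0^2\tilde q_+ +\sum_{j\ge1}\lambda_j^2\tilde q_0$ satisfies $f-q=(1-\lambda_0^2)(f-\tilde q_0)$ on $V_+$, which has no reason to vanish. Your ``iterative correction by ideal combinations'' is not a defined procedure: nothing in $I(h_\text{FJ})$ forces $f-\tilde q_0$ to vanish on $V_+$, and you cannot modify $q$ by elements of $I(h_\text{FJ})$ while keeping it in $P(g)$.

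\textbf{What the paper does instead.} The key lemma shows that $f$ is constant on every semi-algebraically connected component $W$ of the \emph{whole} $V(h_\text{FJ})$. Take a semi-algebraic path $\phi(\tau)=(x(\tau),\bar\lambda(\tau))$ in $W$; the zero set of $\tau\mapsto\lambda_0(\tau)$ is a finite union of closed subintervals. On each such subinterval $x(\tau)\in C(g)$, so $f\circ x$ can take only values in the finite set $f(C(g))$, hence is constant there by continuity. On each complementary open subinterval the Lagrangian $L(x,\bar\lambda)=f(x)-\sum_j(\lambda_j/\lambda_0)g_j(x)$ equals $f$ on $V(h_\text{FJ})\setminus\{\lambda_0=0\}$ and has zero derivative along $\phi$, so $f\circ x$ is again constant. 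Patching, $f\circ x$ is constant on $[0,1]$. Thus $f(V(h_\text{FJ}))$ is finite, and then the same interpolation/Positivstellensatz machinery as in DNP (decompose into disjoint level varieties, pick interpolating polynomials, use Krivine--Stengle on components missing $S(g)$) produces $q\in P(g)[x,\bar\lambda]$ with $f=q$ on $V(h_\text{FJ})$; in the Archimedean case the same argument with Putinar in place of Schm\"udgen gives $q\in Q(g)[x,\bar\lambda]$. No splitting, no rational substitution, no gluing.
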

Here $h_\text{FJ}$ includes polynomials in the Fritz John conditions stated in Lemma \ref{lem:FJ}.
By \eqref{eq:real.radi}, it is clear that in Theorem \ref{theo:rep} if $I(h_\text{FJ})[x,\bar \lambda]$ is real radical, then $f-q\in I(h_\text{FJ})[x,\bar \lambda]$.
Given $d\in\N$, let $\R[x]_d$ be the set of polynomials of degree at most $d$.

We state the second main result in the following theorem:
\begin{theorem}\label{theo:gen}
Let $d_1,\dots,d_m$ be positive integers. Then there exists a polynomial $\psi$, which is in the coefficients of polynomials $g_j\in\R[x]_{d_j}$ for $j=1,\dots,m$ such that if $\psi$ does not vanish at the input data then $C(g)$ with $g:=(g_1,\dots,g_m)$ is empty.
\end{theorem}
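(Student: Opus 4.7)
I would regard the coefficients of $(g_1,\dots,g_m)$ as a point $g$ in the real affine space $\Lambda:=\prod_{j=1}^m\R[x]_{d_j}$ of dimension $N:=\sum_j\binom{n+d_j}{n}$, and work with its complexification $\Lambda_\C$. Since $C(g)$ is the affine real variety cut out by the $m\times m$ minors of $\varphi^g(x)$, which are polynomials in both $x$ and the coefficients of $g$, it suffices to show that the set of $g\in\Lambda_\C$ for which the \emph{complex} critical set $C_\C(g):=\{x\in\C^n:\rank(\varphi^g(x))<m\}$ is non-empty is contained in a proper Zariski-closed subset of $\Lambda_\C$, for then any non-zero defining polynomial (automatically definable over $\R$, as the whole construction is) will serve as $\psi$.

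The main step is a dimension count on the incidence variety
\[
\tilde C:=\{(g,x)\in\Lambda_\C\times\C^n\,:\,\rank(\varphi^g(x))<m\},
\]
carried out through the fibers of the second projection $\pi_2:\tilde C\to\C^n$. Fix $x_0\in\C^n$; the map $g\mapsto\varphi^g(x_0)$ is $\C$-linear, and because any prescribed pair $(\nabla g_j(x_0),g_j(x_0))\in\C^n\times\C$ can be realized by a polynomial of degree one in $\R[x]_{d_j}$ (using $d_j\ge1$), this map surjects onto the subspace $W$ of matrices of the shape $\begin{bmatrix}A\\ \diag(b)\end{bmatrix}$ with $A\in\C^{n\times m}$, $b\in\C^m$. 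Consequently the codimension of $\pi_2^{-1}(x_0)$ in $\Lambda_\C$ equals the codimension in $W$ of the rank-$<m$ locus.

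The key technical step is to show this codimension is at least $n+1$. I would stratify $W$ by the zero-pattern $J:=\{j:b_j=0\}$: on each stratum the columns of $\begin{bmatrix}A\\ \diag(b)\end{bmatrix}$ indexed by $J^c$ are linearly independent and their $\C$-span meets $\C^n\times\{0\}$ trivially, so the total rank equals $|J^c|+\rank(A_J)$, where $A_J$ is the $n\times|J|$ submatrix of columns of $A$ indexed by $J$. The condition $\rank<m$ therefore reads $\rank(A_J)<|J|$, a determinantal locus of codimension $\max(n-|J|+1,0)$ in the $A_J$-factor; adding the codimension $|J|$ of $\{b_j=0,\,j\in J\}$ gives total codimension at least $n+1$ on every stratum. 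With this in hand, $\dim_\C\tilde C\le n+(N-n-1)=N-1$, so $\dim\overline{\pi_1(\tilde C)}\le N-1<N$, and any non-zero real polynomial vanishing on that closure can be taken as $\psi$.

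I expect the main obstacle to be precisely the codimension computation for the structured rank locus in $W$: the naive codimension of ``$\rank<m$'' inside all $(n+m)\times m$ matrices is $n+1$, but since our matrices are constrained to the special shape $\begin{bmatrix}A\\ \diag(b)\end{bmatrix}$, one must verify that the bound survives; the stratification by the zero-pattern of $b$ makes this transparent but requires care in tracking how the rank decomposes across strata, especially when $|J|>n$.
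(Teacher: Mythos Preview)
Your argument is correct: the surjectivity of $g\mapsto\varphi^g(x_0)$ onto the structured matrix space $W$ holds because $d_j\ge1$, and your stratification by the zero-pattern $J$ of $b$ cleanly yields codimension $\ge n+1$ for the rank-deficient locus on every stratum (the case $J=\emptyset$ being vacuous). The passage from fiberwise codimension to $\dim\tilde C\le N-1$ is routine via a constructible stratification of $\C^n$, and the conjugation-stability of $\tilde C$ guarantees a real defining polynomial for $\overline{\pi_1(\tilde C)}$.

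However, your route is genuinely different from the paper's. Rather than a direct incidence-variety dimension count, the paper introduces slack variables $y=(y_1,\dots,y_m)$ and sets $p_j(x,y):=g_j(x)-y_j^2$. The key observation is that the Jacobian $\nabla p(x,y)$ has exactly the shape of $\varphi^g(x)$ with $g_j(x)$ replaced by $-2y_j$ in the diagonal block; moreover, over $\C$ the condition ``$\lambda_j g_j(x)=0$ and $y_j^2=g_j(x)$'' forces $\lambda_j y_j=0$. Thus $C(g)\ne\emptyset$ implies the system $p_1=\dots=p_m=0$ has a singular solution in $\C^{n+m}$, and one can invoke Nie's discriminant $\Delta(p)$ (your Lemma~\ref{lem:discri} in the paper) directly, taking $\psi(g):=\Delta(p)$. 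The paper's approach is shorter and yields an explicit $\psi$ tied to classical elimination theory, at the cost of importing the discriminant machinery as a black box; your approach is more self-contained and makes the geometry transparent, but requires the stratified codimension computation you flagged as the main obstacle.
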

On one hand, Theorem \ref{theo:gen} implies that $C(g)$ is empty, and so is $f(C(g))$ on a Zariski open set in the space of the coefficients of $g_1,\dots,g_m$ with given degrees. 
Thus in Theorem \ref{theo:rep}, our assumption that $f(C(g))$ is finite holds generically.
However, there exists a case of $g=(g_1,\dots,g_m)$ (indicated below) for which both $C(g)$ and $f(C(g))$ are infinite.
To overcome this, we provide in Theorem \ref{theo:rep2} a representation of $f$ with denominator $\lambda_0$ but without assumption on $f(C(g))$.
On the other hand, combining Theorem \ref{theo:gen} and Lemma \ref{lem:empty.Cg} gives the genericity of the Karush--Kuhn--Tucker conditions.

The proofs of Theorems \ref{theo:rep} and \ref{theo:gen} (postponed to Sections \ref{sec:proof.rep} and \ref{sec:proof.gen}) are inspired by the techniques of Demmel--Nie--Power \cite{demmel2007representations} and Nie \cite{nie2014optimality}, respectively. 
To prove Theorem \ref{theo:rep}, we claim that the polynomial $f$ has a finite number of values on the variety $V(h_{FJ})$. 
We prove this by considering $f$ on each connected component of $V(h_{FJ})$ not contained in the hyperplane $\lambda_0=0$ and then applying the mean value theorem.
The remaining case is based on the assumption that $f(C(g))$ is finite. 
The proof of Theorem \ref{theo:gen} relies on the existence of a discriminant for a system of polynomial equations generated by $g_1,\dots,g_m$ under a simple transformation.

Bucero and Mourrain present in \cite[Section 3.3]{bucero2013exact} a variety defined by the Fritz John conditions without giving any representation of polynomials non-negative on semi-algebraic sets in the case of $C(g)\ne\emptyset$.
Note that the polynomial equation $1-\sum_{j=1}^m \lambda_j^2=0$ does not exist in their variety to ensure the non-zero vector of multipliers as in our variety $V(h_\text{FJ})$ with $h_\text{FJ}$ defined as in \eqref{eq:.polyFJ}.

Nie provides in \cite{nie2013exact} a preordering-based representation of polynomial $f$ non-negative on a basic semi-algebraic set $S(g)$ with $g=(g_1,\dots,g_m)$ by adding to this set a large number of polynomial equations generated by the Jacobian of the polynomial map $(f,g_1,\dots,g_m)$. 
To achieve this, he restricts the number of polynomials defining $S(g)$ and assumes that the Jacobian of each subset of $g$ has a full rank on their variety.
For comparison purposes, to obtain our representations in Theorem \ref{theo:rep}, we utilize $n+m+1$ additional polynomial equations (generated by $g_j$ and $\nabla f,\nabla g_j$) for $S(g)$ and $m+1$ slack variables $\lambda_0,\dots,\lambda_m$, which are the multipliers in the Fritz John conditions stated in Lemma \ref{lem:FJ}.
Here we assume that the matrix $\varphi^g(x)$ (generated by $g_j(x)$ and $\nabla g_j(x)$) does not need to have a full rank for each $x\in\R^n$, but it is required that the image of all real points at which $\varphi^g$ is rank-deficient under $f$ is finite.
Under these conditions and the Archimedean condition $S(g)$, we also provide a representation of $f$ involving the quadratic module $Q(g)[x,\bar \lambda]$.

The paper is organized as follows: 
Section \ref{sec:proof} is to prove Theorems \ref{theo:rep} and \ref{theo:gen}. 
We give in Section \ref{sec:examples} some illustrated examples for Theorem \ref{theo:rep}.
A relevant counterexample is indicated in this section.
Section \ref{sec:application} presents the application of our results in computing precisely the optimal value for a polynomial optimization problem. 
Section \ref{sec:variation} shows variations of our main results under slightly more general assumptions.
Section \ref{sec:rep.gen} states the general Nichtnegativstellens\"atze with prescribed denominators based on the Fritz John conditions.

\section{Proof of the main results}
\label{sec:proof}
\subsection{Preliminaries}
In this subsection, we present some preliminaries from algebraic geometry needed for proof of our main results.
We recall one of Krivine--Stengle's Positivstellensatz \cite{krivine1964anneaux} in the following lemma:
\begin{lemma}\label{lem:pos}
Let $g_1,\dots,g_m\in\R[x]$. Assume that $S(g)=\emptyset$ with $g:=(g_1,\dots,g_m)$. 
Then it holds that $-1 \in P(g)[x]$.
\end{lemma}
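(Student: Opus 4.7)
The plan is to prove this classical Krivine--Stengle Positivstellensatz by contradiction, using the theory of ordered fields and real closed extensions. Suppose $-1 \notin P(g)[x]$; the goal is to exhibit a point $u \in \R^n$ satisfying $g_j(u) \geq 0$ for $j = 1, \dots, m$, which contradicts the hypothesis $S(g) = \emptyset$.

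The construction proceeds in three steps. First, by Zorn's lemma I would extend $P(g)[x]$ to a preordering $T \subseteq \R[x]$ that is maximal subject to the constraint $-1 \notin T$. A short algebraic manipulation, exploiting the identity $ab = \frac{1}{4}((a+b)^2 - (a-b)^2)$ together with maximality, shows that $T$ must be a \emph{prime cone}: in addition to containing $\Sigma^2[x]$ and being closed under addition and multiplication, it satisfies $T \cup (-T) = \R[x]$, and $\mathfrak{p} := T \cap (-T)$ is a prime ideal of $\R[x]$. Second, the quotient field $K := \mathrm{Frac}(\R[x]/\mathfrak{p})$ inherits a total ordering from $T/\mathfrak{p}$; letting $R$ denote its real closure, the images $\bar x_1, \dots, \bar x_n$ of the variables in $R$ satisfy $g_j(\bar x) \geq 0$ for all $j$, since each $g_j$ lies in $P(g)[x] \subseteq T$. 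Third, because $R$ is a real closed field extending $\R$, Tarski's transfer principle (equivalently, the Artin--Lang homomorphism theorem) transfers the existential sentence $\exists y \in R^n : \bigwedge_j g_j(y) \geq 0$ down to $\R$, yielding a point in $S(g)$ and hence the desired contradiction.

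The main obstacle is the first step, in which one must show that a maximal proper preordering is automatically a prime cone. This is where the genuine real-algebraic content sits, and it requires a careful case analysis: if some $p \in \R[x]$ lies in neither $T$ nor $-T$, one enlarges $T$ in two different ways (by adjoining $p$ and by adjoining $-p$), obtains identities expressing $-1$ in each enlargement, and then combines them to produce $-1 \in T$, a contradiction. By comparison, the passage to the real closure and the transfer step, while technical, are standard applications of the model completeness of real closed fields and would be invoked as black boxes in any modern treatment.
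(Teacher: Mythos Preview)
The paper does not prove this lemma at all: it is introduced with the sentence ``We recall one of Krivine--Stengle's Positivstellensatz \cite{krivine1964anneaux} in the following lemma'' and then simply cited as a black box. Your proposal therefore goes well beyond what the paper does, supplying an actual argument where the paper offers only a reference.

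As for the content of your sketch, it is the standard textbook proof (as in Bochnak--Coste--Roy or Marshall's book): extend $P(g)[x]$ via Zorn to a maximal preordering $T$ not containing $-1$, show maximality forces $T$ to be a prime cone, pass to the ordered residue field and its real closure, and invoke Tarski transfer (equivalently Artin--Lang) to produce a real point in $S(g)$. The outline is correct. The combination step you allude to for establishing $T\cup(-T)=\R[x]$ can be made precise as follows: if neither $p$ nor $-p$ lies in $T$, maximality gives $-1=t_1+pt_2$ and $-1=s_1-ps_2$ with $t_i,s_i\in T$; multiplying $1+t_1=-pt_2$ and $1+s_1=ps_2$ yields $-1=t_1+s_1+t_1s_1+p^2t_2s_2\in T$, a contradiction. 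The primality of $T\cap(-T)$ and the transfer step are, as you say, standard once this is in hand. So your approach is sound, just strictly more detailed than the paper's treatment, which is to quote the result without proof.
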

We recall in the following lemma Putinar's Positivstellensatz  \cite{putinar1993positive}:
\begin{lemma}\label{lem:Pu}
Let $f,g_1,\dots,g_m\in\R[x]$.
Assume that $f$ is positive on $S(g)$ with $g:=(g_1,\dots,g_m)$, and $S(g)\ne \emptyset$ satisfies the Archimedean condition. 
Then it holds that $f \in Q(g)[x]$.
\end{lemma}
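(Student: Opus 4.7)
The plan is to proceed by contradiction using a Hahn--Banach separation followed by a moment-problem argument. Assume $f \notin Q(g)[x]$. Since $Q(g)[x]$ is a convex cone in the real vector space $\R[x]$ and $\{f\}$ is a convex set disjoint from it, I would first invoke a geometric Hahn--Banach theorem on a suitable finite-dimensional truncation $\R[x]_d$ (for $d$ large, including the degree of $f$) to produce a nonzero linear functional $L\colon \R[x] \to \R$ with $L(q) \ge 0$ for all $q \in Q(g)[x]$ and $L(f) \le 0$. Up to rescaling, one may ensure $L(1) = 1$, using that $1 \in Q(g)[x]$.

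Next, I would use the Archimedean condition to control $L$. From $R - \sum_{i=1}^n x_i^2 \in Q(g)[x]$, applying $L$ to products of the form $p^2(R - \sum x_i^2)$ for $p \in \R[x]$ yields the moment bounds
\begin{equation*}
    L(x_i^{2k}) \le R^k L(1), \qquad |L(x^{\alpha})| \le R^{|\alpha|/2},
\end{equation*}
after Cauchy--Schwarz manipulations in the bilinear form $(p,q) \mapsto L(pq)$, which is positive semidefinite by the assumption on $L$. These bounds imply that the sequence of moments $(L(x^\alpha))_\alpha$ is dominated by the moments of a compactly supported measure on the ball $B_R = \{\sum x_i^2 \le R\}$, and in particular is a determinate moment sequence.

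I would then apply Haviland's theorem (using that $L$ is nonnegative on every polynomial nonnegative on $B_R$, which follows via a density argument of such polynomials inside a suitable closure of $Q(g)[x]$) to represent $L$ as integration against a positive Borel measure $\mu$ supported on $B_R$. Next, from $L(g_j p^2) \ge 0$ for every $p \in \R[x]$, a standard approximation of the indicator function of $\{g_j < 0\}$ by squares of polynomials on $B_R$ gives $\mu(\{g_j < 0\}) = 0$, and therefore $\supp(\mu) \subseteq S(g)$.

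The contradiction then follows at once: since $f$ is \emph{strictly} positive on the compact set $S(g)$ and $\mu$ is a nonzero positive measure supported there, one has
\begin{equation*}
    L(f) = \int f\, d\mu > 0,
\end{equation*}
contradicting $L(f) \le 0$. The main obstacle I expect is the second step: carefully deriving the moment bounds from the Archimedean condition and justifying the passage from the separating functional $L$ on $\R[x]$ to a genuine integration functional via Haviland, which requires first showing that the closure of $Q(g)[x]$ in an appropriate topology on $\R[x]_{2d}$ contains every polynomial nonnegative on $S(g)$ up to that degree. The remaining ingredients---separation, moment representation, and support identification---are then standard.
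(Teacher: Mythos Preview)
The paper does not prove this lemma; it is stated as a preliminary and attributed directly to Putinar \cite{putinar1993positive}. Your outline follows the standard moment-theoretic proof of Putinar's Positivstellensatz, and the overall architecture---separate, bound moments via the Archimedean hypothesis, represent $L$ by a measure, identify the support, derive a contradiction---is correct.

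One point deserves tightening. Your appeal to Haviland's theorem requires knowing that $L$ is nonnegative on every polynomial nonnegative on $B_R$, and you propose to obtain this from a ``density argument of such polynomials inside a suitable closure of $Q(g)[x]$.'' That step is effectively circular: showing that polynomials nonnegative on $B_R$ lie in the closure of the quadratic module generated by $R-\|x\|^2$ is itself an instance of the Positivstellensatz you are trying to prove. The cleaner route is to avoid Haviland entirely. The moment bounds $|L(x^\alpha)|\le R^{|\alpha|/2}$ that you correctly extract from the Archimedean condition already give Carleman's condition on each marginal sequence $(L(x_i^k))_k$, and then the multivariate Hamburger moment problem (e.g.\ via Nussbaum's theorem, or via the GNS construction with bounded multiplication operators as in Putinar's original argument) produces the representing measure $\mu$ directly, with support automatically contained in $B_R$. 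Your subsequent steps---showing $\supp(\mu)\subset S(g)$ from $L(g_jp^2)\ge 0$ and concluding $L(f)>0$---then go through unchanged.
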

Denote by $\deg(p)$ the degree of a given polynomial $p\in\R[x]$.
The following lemma is a consequence of Lemmas \ref{lem:pos} and \ref{lem:Pu}:
\begin{lemma}\label{lem:rep-1.Pu}
Let $g_1,\dots,g_m\in\R[x]$. 
Assume that $g_m:=R-x_1^2-\dots-x_n^2$ for some $R>0$ and $S(g)=\emptyset$ with $g:=(g_1,\dots,g_m)$. 
Then it holds that $-1 \in Q(g)[x]$.
\end{lemma}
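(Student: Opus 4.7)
The plan is to combine Krivine--Stengle's Nichtnegativstellensatz (Lemma~\ref{lem:pos}) with Putinar's Positivstellensatz (Lemma~\ref{lem:Pu}). Since $S(g)=\emptyset$, Lemma~\ref{lem:pos} immediately yields a preordering representation
\begin{equation*}
-1 \;=\; \sum_{\alpha\in\{0,1\}^m}\sigma_\alpha\,g^\alpha, \qquad \sigma_\alpha\in\Sigma^2[x].
\end{equation*}
Moreover, since $g_m=R-x_1^2-\cdots-x_n^2$ is a generator of $Q(g)[x]$, we have $R-x_1^2-\cdots-x_n^2\in Q(g)[x]$ trivially, so $Q(g)[x]$ satisfies the Archimedean condition. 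The remaining task is therefore to convert the preordering representation into a representation in the quadratic module $Q(g)[x]$.

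The main technical tool is Lemma~\ref{lem:Pu} applied to the auxiliary single-constraint system $(g_m)$, whose associated set $\{x\in\R^n: x_1^2+\cdots+x_n^2\le R\}$ is non-empty, compact, and Archimedean. Hence every polynomial strictly positive on this ball lies in $Q((g_m))[x]\subseteq Q(g)[x]$; in particular, for every polynomial $p\in\R[x]$ there is a constant $N_p>0$ such that both $N_p-p$ and $N_p+p$ belong to $Q(g)[x]$. This provides a rich reservoir of quadratic-module elements with which to reshape the preordering representation of $-1$.

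The main obstacle is absorbing each mixed-product term $\sigma_\alpha g^\alpha$ with $|\alpha|\ge 2$ into $Q(g)[x]$, since such terms a priori lie in $P(g)[x]$ but not manifestly in $Q(g)[x]$. I would proceed by induction on $|\alpha|$, using the polarization identity $g_ig_j=\tfrac14\bigl((g_i+g_j)^2-(g_i-g_j)^2\bigr)$ combined with Archimedean bounds of the form $N_{ij}-(g_i-g_j)^2\in Q(g)[x]$ to rewrite each pairwise product as a sum of a square and an element of $Q(g)[x]$, up to bounded sums-of-squares corrections. Longer products $g^\alpha$ reduce to the pairwise case by repeated application. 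The technical crux is to track these corrections through the full representation of $-1$ so that the cumulative offsets can be absorbed into an enlarged leading SOS summand $\sigma_0$; managing the sign bookkeeping consistently---so that what starts as a formal expression in $\Sigma^2[x]-\Sigma^2[x]$ lands cleanly in $Q(g)[x]$---is where the proof requires the most care.
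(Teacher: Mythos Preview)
Your plan has a genuine gap in the ``sign bookkeeping'' step, and it is not just a matter of care---the strategy as written cannot close.

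Concretely: your polarization identity plus the Archimedean bound $N_\alpha-\sigma_\alpha(g_i-g_j)^2\in Q((g_m))[x]\subseteq Q(g)[x]$ yields, for each mixed term,
\[
\sigma_\alpha g^\alpha \;=\; q_\alpha \;-\; c_\alpha,\qquad q_\alpha\in Q(g)[x],\ \ c_\alpha\ge 0\ \text{a constant},
\]
(or, before using the bound, $q_\alpha$ minus an SOS). Summing and combining with the $|\alpha|\le 1$ part gives only
\[
K-1\;\in\;Q(g)[x]\qquad\text{for some constant }K=\sum_{|\alpha|\ge 2}c_\alpha\,.
\]
You have no control over $K$: the constants $c_\alpha$ are essentially sup-norms on the ball of the specific SOS weights coming from the Krivine--Stengle certificate, and these can be arbitrarily large. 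So you land at a vacuous statement (any positive constant lies in $Q(g)$). Passing through $\Sigma^2-\Sigma^2$ does not help either, since \emph{every} polynomial lies in $\Sigma^2[x]-\Sigma^2[x]$. In short, your inductive reduction shows $-1\in Q(g)[x]-\R_{\ge 0}$, which carries no information.

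The paper avoids this entirely by a homogenization trick that manufactures a \emph{non-empty} Archimedean set so that Lemma~\ref{lem:Pu} applies directly. One introduces $x_0$, sets $\bar p(x_0,x)=x_0^{2\lceil\deg p/2\rceil}p(x/x_0)$, and homogenizes the Krivine--Stengle identity to obtain $-x_0^{2d}=\sum_\alpha\psi_\alpha\bar g^{\alpha}$ with $\psi_\alpha\in\Sigma^2[\bar x]$. Because $S(g)=\emptyset$, the set $S(\bar g,\,1-x_0^2)$ is contained in $\{x_0=0\}$ (any point with $x_0\neq 0$ would project to a point of $S(g)$); hence $-x_0^{2d}+\tfrac12$ is strictly positive there, and the set is non-empty (it contains the origin) and Archimedean. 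Lemma~\ref{lem:Pu} then gives $-x_0^{2d}+\tfrac12\in Q(\bar g,\,1-x_0^2)[\bar x]$, and specializing $x_0=1$ yields $-\tfrac12\in Q(g)[x]$, hence $-1\in Q(g)[x]$. The key difference from your approach is that the paper arranges a single application of Putinar on a non-empty set with a strictly positive target and a \emph{known} slack $\tfrac12$, rather than accumulating uncontrolled constants through term-by-term reductions.
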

\begin{proof}
Since $S(g)=\emptyset$, Lemma \ref{lem:pos} yields that there exists $\sigma_\alpha\in \Sigma^2[x]$ such that 
\begin{equation}\label{eq:rep.-1}
    -1=\sum_{\alpha\in\{0,1\}^n}\sigma_\alpha g^\alpha\,.
\end{equation}
Given $p\in\R[x]$ with $u=\lceil \deg(p)/2\rceil$, let $\bar p:=x_0^{2u}p(x/x_0) \in\R[\bar x]$, where $\bar x:=(x_0,x)$.
For instance, $\bar g_m=Rx_0^2-x_1^2-\dots-x_n^2$.
Let $d$ be an integer number such that $2d\ge \deg(\sigma_\alpha g^\alpha)$.
Let $d_j=\lceil\deg(g_j)/2\rceil$.
From \eqref{eq:rep.-1}, we get
\begin{equation}\label{eq:equi}
    -x_0^{2d}=\sum_{\alpha\in\{0,1\}^n}\psi_\alpha \bar g^\alpha\,,
\end{equation}
where $\psi_\alpha=x_0^{2(d-d_j)}\sigma_\alpha(x/x_0)\in\Sigma^2[\bar x]$ and $\bar g=(\bar g_1,\dots,\bar g_m)$.
Denote by $w\in\R[\bar x]$ the polynomial on the right-hand side of \eqref{eq:equi}.
Then $w$ is non-negative on $S(\bar g,1-x_0^2)$.
Since $0\in S(\bar g,1-x_0^2)$, we get $S(\bar g,1-x_0^2)\ne \emptyset$. 
On the other hand $S(\bar g,1-x_0^2)$ satisfies the Archimedean condition since
\begin{equation}
    (R+1)-x_0^2-\dots-x_n^2=(R+1)(1-x_0^2)+\bar g_m\in Q(\bar g,1-x_0^2)[\bar x]\,.
\end{equation}
Applying Lemma \ref{lem:Pu}, we obtain
\begin{equation}
    -x_0^{2d}+\frac{1}{2}=w+\frac{1}{2}\in Q(\bar g,1-x_0^2)[\bar x]\,.
\end{equation}
Letting $x_0=1$ implies that $-\frac{1}{2}\in Q(g)[x]$, yielding the result.
\end{proof}
In the following lemma, we obtain the same result as Lemma \ref{lem:rep-1.Pu} under a weaker condition:
\begin{lemma}\label{lem:rep-1.Pu2}
Let $g_1,\dots,g_m\in\R[x]$ such that $S(g)$ with $g:=(g_1,\dots,g_m)$ satisfies the Archimedean condition and $S(g)=\emptyset$. 
Then $-1 \in Q(g)[x]$.
\end{lemma}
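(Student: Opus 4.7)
The plan is to bootstrap from Lemma \ref{lem:rep-1.Pu}: that lemma already handles the special situation where one of the generators is of the form $R-x_1^2-\cdots-x_n^2$, so I would like to reduce the Archimedean case to that one by simply adjoining such a ball constraint to the list $g$.

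More concretely, first I would unpack the Archimedean hypothesis: there exists $R>0$ and sums of squares $\tau_0,\tau_1,\dots,\tau_m\in\Sigma^2[x]$ with
\begin{equation}
R-x_1^2-\cdots-x_n^2=\tau_0+\sum_{j=1}^m\tau_j g_j\,.
\end{equation}
Set $g_{m+1}:=R-x_1^2-\cdots-x_n^2$ and $g':=(g_1,\dots,g_m,g_{m+1})$. Since $g_{m+1}\in Q(g)[x]$ it is non-negative on $S(g)$, so $S(g')=S(g)=\emptyset$. Now $g'$ meets the hypotheses of Lemma \ref{lem:rep-1.Pu} (empty feasible set and a ball generator in last position), which gives
\begin{equation}
-1=\sigma_0+\sum_{j=1}^m\sigma_j g_j+\sigma_{m+1}g_{m+1}
\end{equation}
for some $\sigma_0,\dots,\sigma_{m+1}\in\Sigma^2[x]$.

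The final step is to absorb the $\sigma_{m+1}g_{m+1}$ term back into $Q(g)[x]$. Substituting the Archimedean identity for $g_{m+1}$ yields
\begin{equation}
-1=(\sigma_0+\sigma_{m+1}\tau_0)+\sum_{j=1}^m(\sigma_j+\sigma_{m+1}\tau_j)g_j\,,
\end{equation}
and since $\Sigma^2[x]$ is closed under both sums and products, each coefficient is again a sum of squares, giving $-1\in Q(g)[x]$ as required.

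There is really no serious obstacle here: the only subtlety is making sure one does not merely assert $Q(g,g_{m+1})[x]\subseteq Q(g)[x]$ (which is false for quadratic modules in general), but instead explicitly uses that the single extra multiplier $\sigma_{m+1}$ can be distributed against the Archimedean representation of $g_{m+1}$, where the multiplicative closure of $\Sigma^2[x]$ then saves the day. Everything else is bookkeeping, and no further appeal to Putinar's theorem is needed beyond what is already packaged inside Lemma \ref{lem:rep-1.Pu}.
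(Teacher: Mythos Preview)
Your proof is correct and follows essentially the same route as the paper: adjoin the ball polynomial $g_{m+1}=R-x_1^2-\cdots-x_n^2$ coming from the Archimedean hypothesis, apply Lemma~\ref{lem:rep-1.Pu} to $(g,g_{m+1})$, and then push the resulting representation back into $Q(g)[x]$.

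One small remark on your closing comment: the inclusion $Q(g,g_{m+1})[x]\subseteq Q(g)[x]$ is in fact automatically valid whenever $g_{m+1}\in Q(g)[x]$, precisely by the substitution argument you carry out (an SOS multiple of an element of $Q(g)[x]$ stays in $Q(g)[x]$ because $\Sigma^2[x]$ is closed under products). The paper simply asserts this inclusion in one line, whereas you unpack it explicitly; both are fine, and your caution is not misplaced, just unnecessary in this particular situation.
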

\begin{proof}
Since $S(g)$ satisfies the Archimedean condition, there exists $R>0$ such that $g_{m+1}:=R-x_1^2-\dots-x_n^2\in Q(g)[x]$.
It implies that $S(g)\subset S(g_{m+1})$, which gives $S(g,g_{m+1})=S(g)=\emptyset$.
By using Lemma \ref{lem:rep-1.Pu}, we obtain $-1\in Q(g,g_{m+1})[x]\subset Q(g)[x]$, yielding the result.
\end{proof}
Given $h_1,\dots,h_l\subset\R[x]$, let $V_\C(h)$ be the complex variety defined by $h=(h_1,\dots,h_m)$, i.e.,
\begin{equation}
    V_\C(h):=\{x\in\C^n\,:\,h_j(x)=0\,,\,j=1,\dots,l\}\,.
\end{equation}
Denote by $\delta_{ij}$ the Kronecker delta function at $(i,j)\in\N^2$.

The following lemma is a direct consequence of \cite[Lemma 2.4 and Remark 2.5]{demmel2007representations}:
\begin{lemma}\label{lem:interpolate}
Let $U_1,\dots,U_r$ be pairwise disjoint complex varieties defined by finitely many polynomials in $\R[x]$. 
Then there exist polynomials $p_1,\dots,p_r\in \R[x]$ such that $p_i(U_j)=\delta_{ij}$.
\end{lemma}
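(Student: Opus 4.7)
The plan is to construct the interpolating polynomials via a partition-of-unity argument built on Hilbert's Nullstellensatz, in the spirit of Lagrange interpolation. The key intermediate goal is to produce, for each ordered pair $i\ne j$, real polynomials $a_{ij},b_{ij}\in\R[x]$ with $a_{ij}|_{U_i}\equiv 0$, $b_{ij}|_{U_j}\equiv 0$, and $a_{ij}+b_{ij}=1$ identically on $\C^n$. Once such pairs are available, I would define $p_i:=\prod_{k\ne i} b_{ik}\in\R[x]$ and check the required property directly: on $U_i$ each factor satisfies $b_{ik}=1-a_{ik}=1$ because $a_{ik}$ vanishes on $U_i$, so $p_i\equiv 1$ on $U_i$; on $U_j$ with $j\ne i$ the factor indexed by $k=j$ vanishes on $U_j$, so $p_i\equiv 0$ there. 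This yields the interpolation identity $p_i(U_j)=\delta_{ij}$.

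To produce the pairs $(a_{ij},b_{ij})$, I would work with the ideals $J_i\subset\R[x]$ generated by the given real defining polynomials of $U_i$. By hypothesis $U_i\cap U_j=\emptyset$ inside $\C^n$; equivalently $V_\C(J_i)\cap V_\C(J_j)=\emptyset$. Hilbert's weak Nullstellensatz applied to the ideal $J_i\C[x]+J_j\C[x]\subseteq\C[x]$ therefore forces $J_i\C[x]+J_j\C[x]=\C[x]$, so there exist $\tilde a_{ij}\in J_i\C[x]$ and $\tilde b_{ij}\in J_j\C[x]$ with $\tilde a_{ij}+\tilde b_{ij}=1$. Since $J_i$ is generated by real polynomials, any element of $J_i\C[x]$ is an $\R$-linear combination of elements of $J_i$ plus $i$ times another such combination, so its coefficient-wise real part lies in $J_i$; the same holds for $J_j$. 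Taking real parts of both sides of $\tilde a_{ij}+\tilde b_{ij}=1$ then yields the desired decomposition $a_{ij}+b_{ij}=1$ in $\R[x]$ with $a_{ij}\in J_i$ and $b_{ij}\in J_j$.

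I expect the only genuine technical point to be the descent from $\C$ to $\R$, but this is handled cleanly by the real-part observation above precisely because the defining data of each $U_i$ is real. After that step, everything reduces to the elementary product verification described in the first paragraph, exactly mirroring classical Lagrange interpolation over a finite set of nodes, with the $b_{ij}$ playing the role of normalized linear factors.
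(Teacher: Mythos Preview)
Your argument is correct. The Nullstellensatz step, the descent from $\C[x]$ to $\R[x]$ by taking real parts (valid because the generators of each $J_i$ are real), and the Lagrange-style product $p_i=\prod_{k\ne i}b_{ik}$ all work exactly as you describe.

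The paper itself does not supply a proof; it simply cites \cite[Lemma~2.4 and Remark~2.5]{demmel2007representations}. The argument there is essentially the one you have written: disjointness of the complex zero sets forces $1\in J_i\C[x]+J_j\C[x]$ by the weak Nullstellensatz, one descends to $\R[x]$ using that the generators are real, and one assembles the interpolants by the same product construction. So your proof is not a genuinely different route but rather a self-contained version of the cited result.
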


We generalize the definition of a basic semi-algebraic set as follows: A semi-algebraic subset of R is a subset of the form
\begin{equation}\label{eq:def.semi.set}
\bigcup_{i=1}^t\bigcap_{j=1}^{r_i}\{x\in\R^n\,:\,f_{ij}(x)*_{ij}0\}\,,
\end{equation}
where $f_{ij}\in\R[x]$ and $*_{ij}$ is either $>$ or $=$.
Given two semi-algebraic sets $A\subset \R^n$ and $B\subset \R^m$, we say that a mapping $f : A \to B$ is semi-algebraic if its graph is a semi-algebraic set in $\R^{n+m}$.

The following lemma can be found in \cite[Proposition 1.6.2 (ii)]{pham2016genericity}:
\begin{lemma}\label{lem:composit.semi-al}
Compositions of semi-algebraic maps are semi-algebraic.
\end{lemma}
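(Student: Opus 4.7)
The plan is to express the graph of the composition as a projection of a set built from the graphs of the two given maps and then invoke the Tarski--Seidenberg projection theorem, which is the fundamental tool for stability of semi-algebraic sets under elimination of variables.

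More concretely, let $f\colon A\to B$ and $g\colon B\to C$ be semi-algebraic maps with $A\subset\R^n$, $B\subset\R^m$, $C\subset\R^p$, so that the graphs $\Gamma_f\subset\R^{n+m}$ and $\Gamma_g\subset\R^{m+p}$ are semi-algebraic in the sense of \eqref{eq:def.semi.set}. First I would form the cylinders $\Gamma_f\times\R^p\subset\R^{n+m+p}$ and $\R^n\times\Gamma_g\subset\R^{n+m+p}$; each remains semi-algebraic, as one simply reuses the defining polynomial inequalities/equalities in the larger ambient space (the new coordinates are free, hence do not appear in the defining relations). I would then take their intersection
\begin{equation}
W=(\Gamma_f\times\R^p)\cap(\R^n\times\Gamma_g)=\{(x,y,z)\in\R^{n+m+p}\,:\,(x,y)\in\Gamma_f\,,\,(y,z)\in\Gamma_g\}\,.
\end{equation}
The intersection of two semi-algebraic sets is semi-algebraic: writing each in the canonical form \eqref{eq:def.semi.set} and distributing the intersection over unions produces a new finite union of finite intersections of sets of the form $\{f_{ij}*_{ij}0\}$, which is again of the shape \eqref{eq:def.semi.set}.

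Next, the graph of the composition is
\begin{equation}
\Gamma_{g\circ f}=\{(x,z)\in\R^{n+p}\,:\,\exists y\in\R^m\,,\,(x,y,z)\in W\}=\pi(W)\,,
\end{equation}
where $\pi\colon\R^{n+m+p}\to\R^{n+p}$ is the projection that forgets the $y$-coordinates. The main (and only substantive) step is then to apply the Tarski--Seidenberg theorem, which guarantees that the image of a semi-algebraic set under a coordinate projection is again semi-algebraic. This yields that $\Gamma_{g\circ f}$ is semi-algebraic, and hence $g\circ f$ is a semi-algebraic map.

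The hard part is really just Tarski--Seidenberg itself; everything else (cylinders, intersections, rewriting unions/intersections) is purely formal manipulation of the Boolean form \eqref{eq:def.semi.set}. Since Tarski--Seidenberg is a foundational result of real algebraic geometry, in the proof I would simply cite it rather than reprove it, making the argument above essentially a two-line reduction.
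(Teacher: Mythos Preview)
Your argument is correct and is exactly the standard proof of this fact: build the fibered product of the two graphs in $\R^{n+m+p}$, note it is semi-algebraic as an intersection of cylinders over semi-algebraic sets, and then project out the middle coordinates using Tarski--Seidenberg. There is nothing to add or fix.

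For comparison, the paper does not prove this lemma at all; it simply quotes it from \cite[Proposition 1.6.2 (ii)]{pham2016genericity}. Your write-up thus supplies the (short) argument that the paper delegates to a reference, and the proof you give is precisely the one found in standard real algebraic geometry texts (e.g., Bochnak--Coste--Roy), so there is no substantive methodological difference to discuss.
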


A semi-algebraic subset $A$ is said to be semi-algebraically path connected if, for every $x,y$ in $A$, there exists a continuous semi-algebraic mapping $\phi:[0,1] \to A$ such that $\phi(0) = x$ and $\phi(1) = y$.

Combining \cite[Theorem 2.4.5 and Proposition 2.5.13]{bochnak2013real}, we obtain the following lemma:
\begin{lemma}\label{eq:component.semi}
Every semi-algebraic set has a finite number of components, which are semi-algebraically path connected.
\end{lemma}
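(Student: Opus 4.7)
My plan is to combine a cell decomposition with a chain-of-paths argument to identify components with unions of cells.

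First, I would invoke a cylindrical algebraic decomposition (CAD) of $\R^n$, built inductively on $n$ using the Tarski--Seidenberg quantifier elimination theorem. Applied to the finitely many polynomials $f_{ij}$ appearing in the representation \eqref{eq:def.semi.set} of a semi-algebraic set $A\subset\R^n$, CAD produces a finite partition of $\R^n$ into semi-algebraic cells $C_1,\dots,C_N$ on which the sign of each $f_{ij}$ is constant; each cell is semi-algebraically homeomorphic to an open cube $(0,1)^{d_i}$ (with the convention that $(0,1)^0$ denotes a single point). In particular $A$ is the disjoint union of finitely many such cells.

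Each cell $C_i$ is automatically semi-algebraically path-connected: straight-line segments in the open cube are semi-algebraic paths, and pulling them back through the semi-algebraic homeomorphism preserves this property by Lemma \ref{lem:composit.semi-al}.

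I would then call two cells of $A$ equivalent if they can be linked by a finite chain of cells in $A$ whose consecutive members can be joined by a continuous semi-algebraic path inside $A$. Finiteness of the cell count gives finitely many equivalence classes $K_1,\dots,K_r$, and each union $U_s:=\bigcup_{C_i\in K_s} C_i$ is a finite union of semi-algebraic sets, hence semi-algebraic, and is semi-algebraically path-connected by concatenating paths (again semi-algebraic by Lemma \ref{lem:composit.semi-al}).

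The main obstacle is proving that the $U_s$ coincide with the topological connected components of $A$, so that no continuous path in $A$ can move between different $U_s$. For this I would use the structure of cells together with the curve selection lemma: any continuous path $\gamma:[0,1]\to A$ has a preimage of each cell $C_i$ that is a semi-algebraic subset of $[0,1]$ (by Lemma \ref{lem:composit.semi-al}), hence a finite union of points and intervals; this subdivides $\gamma$ into finitely many sub-arcs, each contained in a single cell, producing a chain of cells witnessing the equivalence of the cells containing its endpoints. Consequently the topological components of $A$ are exactly the $U_s$, so they are finite in number, semi-algebraic, and semi-algebraically path-connected.
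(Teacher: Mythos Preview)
The paper does not supply its own proof of this lemma; it merely invokes \cite[Theorem~2.4.5 and Proposition~2.5.13]{bochnak2013real}. Your CAD strategy is the standard route behind those citations, but the final step contains a genuine gap. You assert that for an arbitrary \emph{continuous} path $\gamma:[0,1]\to A$ each preimage $\gamma^{-1}(C_i)$ is a semi-algebraic subset of $[0,1]$ ``by Lemma~\ref{lem:composit.semi-al}''. That lemma is about compositions of \emph{semi-algebraic} maps; your $\gamma$ is only continuous. The claimed conclusion is false in general: take $n=1$, $A=\R$, the cell $C=\{0\}$ of a CAD adapted to the polynomial $x$, and $\gamma(t)=t\sin(1/t)$ with $\gamma(0)=0$; then $\gamma^{-1}(C)=\{0\}\cup\{1/(k\pi):k\in\N,\ k\ge 1\}$, which is not semi-algebraic. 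So the promised subdivision of $\gamma$ into finitely many cell-contained sub-arcs does not follow, and the chain-of-cells argument collapses.

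There is a second, related omission: ruling out continuous paths between distinct $U_s$ would at best identify the $U_s$ with the \emph{path} components of $A$, whereas the lemma concerns (topological) connected components; you never address the passage from one to the other. The usual repair avoids arbitrary continuous paths altogether. Since each cell is connected and there are finitely many, every connected component of $A$ is automatically a finite union of cells, hence semi-algebraic, and there are finitely many components. What remains is to show that each component is semi-algebraically path connected, and here the curve selection lemma is used differently from your sketch: if $x$ lies in the closure (within $A$) of a cell $C'\subset A$, curve selection produces a semi-algebraic arc from $x$ into $C'$, so cells whose closures meet lie in the same semi-algebraic path class; one then uses the adjacency structure of the CAD (equivalently, shows directly that each $U_s$ is open and closed in $A$) to conclude.
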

The following result is a direct consequence of Lemma \ref{eq:component.semi} since the difference between two real varieties is a semi-algebraic set according to the definition \eqref{eq:def.semi.set}:
\begin{lemma}\label{lem:diff.connect}
The difference between two real varieties has a finite number of components, which are semi-algebraically path connected.
\end{lemma}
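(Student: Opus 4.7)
The plan is to reduce the claim to Lemma \ref{eq:component.semi} by exhibiting the set-theoretic difference of two real varieties explicitly in the form \eqref{eq:def.semi.set}. Once that reduction is made, the finite decomposition into semi-algebraically path connected components comes for free.

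Concretely, write $h_1=(h_{1,1},\dots,h_{1,l_1})$ and $h_2=(h_{2,1},\dots,h_{2,l_2})$. A point $x$ lies in $V(h_1)\setminus V(h_2)$ precisely when every $h_{1,j}$ vanishes at $x$ while at least one $h_{2,k}$ does not. The condition $h_{2,k}(x)\ne 0$ splits into $h_{2,k}(x)>0$ or $-h_{2,k}(x)>0$, so I would write
\[
V(h_1)\setminus V(h_2)=\bigcup_{k=1}^{l_2}\bigl(A_k^+\cup A_k^-\bigr),
\]
where $A_k^{\pm}:=\{x\in\R^n:h_{1,1}(x)=0,\dots,h_{1,l_1}(x)=0,\ \pm h_{2,k}(x)>0\}$. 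Each $A_k^\pm$ is a finite intersection of polynomial equalities together with a single strict polynomial inequality, which matches an inner $\bigcap$ in the definition \eqref{eq:def.semi.set}; the outer finite union over $k$ and the sign choice then fills in the outer $\bigcup$. Consequently, $V(h_1)\setminus V(h_2)$ is a semi-algebraic set.

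There is no serious obstacle here: the only nontrivial observation is that a strict inequality of the form $f<0$ can be rewritten as $-f>0$, and this is automatic because $-f\in\R[x]$ whenever $f\in\R[x]$, so the rewriting stays within the class of polynomial conditions allowed in \eqref{eq:def.semi.set}. Applying Lemma \ref{eq:component.semi} to this semi-algebraic set then yields the conclusion directly: $V(h_1)\setminus V(h_2)$ has only finitely many components, each of which is semi-algebraically path connected.
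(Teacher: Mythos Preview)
Your proof is correct and follows essentially the same approach as the paper: the paper simply asserts that the difference of two real varieties is a semi-algebraic set ``according to the definition \eqref{eq:def.semi.set}'' and then invokes Lemma~\ref{eq:component.semi}, while you spell out the decomposition explicitly. The added detail is fine but not required.
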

\if{
\begin{lemma}
Let 
\end{lemma}
}\fi

We state in the following lemma a decomposition of the intersection of a given complex variety with a real space:
\begin{lemma}\label{lem:quadra}
Let $f\in\R[x]$ and let $W$ be a complex variety defined by finitely many polynomials in $\R[x]$. 
Assume that $f(W\cap \R^n)$ is finite.
Then there exists a finite sequence of subsets $W_1,\dots,W_r$ such that the following conditions hold:
\begin{enumerate}
\item $W_1,\dots,W_r$ are pairwise disjoint complex varieties defined by finitely many polynomials in $\R[x]$;
\item for $j=1,\dots,r$, $W_j\subset W$, and  $f$ is constant on $W_j$;
\item $(W_1\cup\dots\cup W_r)\cap \R^{n}=W\cap \R^{n}$.
\end{enumerate}
\end{lemma}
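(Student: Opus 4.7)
The plan is to construct the $W_j$ directly by partitioning according to the finitely many real values that $f$ attains on $W \cap \R^n$, then adding one extra polynomial equation to the defining system of $W$.

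First I would note that by hypothesis $f(W\cap \R^n)$ is a finite subset of $\R$; write it as $\{c_1,\dots,c_r\}$, where the $c_j$ are pairwise distinct real numbers (if $W\cap\R^n=\emptyset$ the statement is vacuous with $r=0$, or one may take any single variety and check the conditions trivially). Since $W$ is a complex variety defined by finitely many polynomials in $\R[x]$, fix such a defining system $W=V_\C(h_1,\dots,h_s)$ with $h_i\in\R[x]$. Then define
\begin{equation}
W_j := V_\C(h_1,\dots,h_s,\,f-c_j)\subset \C^n,\qquad j=1,\dots,r.
\end{equation}
Each $W_j$ is a complex variety defined by finitely many polynomials in $\R[x]$ (the $c_j$ are real and $f\in\R[x]$), and by construction $W_j\subset W$ and $f\equiv c_j$ on $W_j$. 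This immediately establishes conditions (1, constant on $W_j$) and the inclusion part of (2).

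Next I would verify pairwise disjointness: if $i\neq j$ and $x\in W_i\cap W_j$, then $f(x)=c_i$ and $f(x)=c_j$ simultaneously, contradicting $c_i\neq c_j$. Hence condition (1) on disjointness holds. For the covering property (3), take any $x\in W\cap\R^n$; then $f(x)\in f(W\cap\R^n)=\{c_1,\dots,c_r\}$, so $f(x)=c_j$ for some $j$, which together with $x\in W$ shows $x\in W_j\subset W_1\cup\dots\cup W_r$. Conversely, each $W_j\cap\R^n\subset W\cap\R^n$ since $W_j\subset W$. Therefore $(W_1\cup\dots\cup W_r)\cap\R^n=W\cap\R^n$.

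There is really no analytic obstacle here; the only subtlety is the bookkeeping that $f(W\cap\R^n)\subset\R$ (which is automatic because $f\in\R[x]$ and we evaluate on real points), so that the extra defining equation $f-c_j$ has real coefficients — this is precisely what lets the $W_j$ be defined by polynomials in $\R[x]$ as required. The step most worth double-checking is (3), where the finiteness assumption on $f(W\cap\R^n)$ is actually used: without it, a real point of $W$ might have $f$-value outside the proposed finite list, and the covering in (3) would fail.
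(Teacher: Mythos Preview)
Your proof is correct and matches the paper's own argument essentially line for line: the paper also writes $f(W\cap\R^n)=\{t_1,\dots,t_r\}$ with distinct $t_j$, sets $W_j:=W\cap V_\C(f-t_j)$, and verifies disjointness and the covering of real points exactly as you do. The only cosmetic difference is that you name a defining system $h_1,\dots,h_s$ for $W$ explicitly, whereas the paper just writes the intersection $W\cap V_\C(f-t_j)$.
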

\begin{proof}
By assumption, we get
$f(W\cap \R^{n}) = \{t_1 ,\dots, t_r \} \subset \R$,
where $t_i\ne t_j$ if $i\ne j$.
For $j=1,\dots,r$, let
$W_j:=W\cap V_\C(f-t_j)$.
Then $W_j$ is a complex variety defined by finitely many polynomials in $\R[x]$.
It is clear that $f(W_j)=\{t_j\}$.
We claim that $W_1,\dots,W_r$ are pairwise disjoint. 
Otherwise, let $x\in W_i\cap W_j$ with $i\ne j$. 
It implies that $t_i=f(x)=t_j$ which is impossible.
Let $U=W_1\cup\dots\cup W_r$.
We show that $W\cap \R^{n}=U\cap \R^{n}$.
Let $x\in W\cap \R^{n}$. 
Then there is $j\in\{1,\dots,r\}$ such that $f(x)=t_j$ which gives $x\in W_j\subset U$ so we get $x\in U\cap \R^{n}$. 
Thus $W\cap \R^{n}\subset U\cap \R^{n}$ since $x$ is arbitrary in $W\cap \R^{n}$.
Conversely, suppose that $x\in U\cap \R^{n}$. 
By the definition of $U$, there is $j\in\{1,\dots,r\}$ such that $x\in W_j$.
It implies that $x\in W$ by definition of $W_j$. 
Then $x\in W\cap \R^{n}$.
Thus $U\cap \R^{n}\subset W\cap \R^{n}$ since $x$ is arbitrary in $U\cap \R^{n}$.
\end{proof}
We use the technique from the proof of  \cite[Theorem 3.2]{demmel2007representations} to obtain the following lemma:
\begin{lemma}\label{lem:const.func}
Let $f,g_1,\dots,g_m\in\R[x]$. 
Assume that $f$ is non-negative on $S(g)$ with $g=(g_1,\dots,g_m)$. 
Let $U_1,\dots,U_r$ be pairwise disjoint complex varieties defined by finitely many polynomials in $\R[x]$.
Set $U=U_1\cup\dots\cup U_r$.
Assume that $f$ is constant on each $U_i$.
Then there exists $q\in P(g)[x]$ such that $f-q$ vanishes on $U\cap\R^n$.
Moreover, if $S(g)$ satisfies the Archimedean condition, we can take $q\in Q(g)[x]$.
\end{lemma}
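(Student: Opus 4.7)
The plan is to use the interpolation polynomials from Lemma~\ref{lem:interpolate} to build $q$ one component at a time. Since $f$ is constant on each $U_i$, let $a_i$ denote that constant; whenever $U_i\cap\R^n\ne\emptyset$, evaluating $f$ at any real point of $U_i$ shows $a_i\in\R$. By Lemma~\ref{lem:interpolate} pick $p_1,\dots,p_r\in\R[x]$ with $p_i(U_j)=\delta_{ij}$, and hence $p_i^2(U_j\cap\R^n)=\delta_{ij}$. For each $i$ with $U_i\cap\R^n\ne\emptyset$ I will construct a polynomial $q_i\in P(g)[x]$ (respectively $Q(g)[x]$) satisfying $q_i\equiv a_i$ on $U_i\cap\R^n$ and $q_i\equiv 0$ on $U_j\cap\R^n$ for $j\ne i$; then $q:=\sum_i q_i$ will have the desired properties, since $f\equiv a_j$ on each $U_j\cap\R^n$.

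If $a_i\ge 0$, I simply set $q_i:=a_i p_i^2\in\Sigma^2[x]$, which lies in both $P(g)[x]$ and $Q(g)[x]$. The delicate case is $a_i<0$: here $f=a_i<0$ on $U_i\cap\R^n$ forces $S(g)\cap(U_i\cap\R^n)=\emptyset$, because $f\ge 0$ on $S(g)$. Writing $U_i=V_\C(h_{i,1},\dots,h_{i,k_i})$ with $h_{i,l}\in\R[x]$, this emptiness translates into $S(g,h_{i,1},-h_{i,1},\dots,h_{i,k_i},-h_{i,k_i})=\emptyset$, so Lemma~\ref{lem:pos} yields
\[
-1 \;=\; \sigma_i \;+\; \sum_{l=1}^{k_i} h_{i,l}\,\rho_{i,l}
\]
with $\sigma_i\in P(g)[x]$ and $\rho_{i,l}\in\R[x]$, obtained by collecting, from the $P$-certificate on the extended system, the terms in which no $h_{i,l}$ appears (any remaining term carries at least one factor $h_{i,l}$ or $-h_{i,l}$, which can be pulled out). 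Restricting to $U_i\cap\R^n$ gives $\sigma_i\equiv -1$ there, so $q_i:=-a_i\,\sigma_i\,p_i^2$ lies in $P(g)[x]$ by closure of the preordering under multiplication and under scaling by the positive constant $-a_i$, and it satisfies $q_i=a_i$ on $U_i\cap\R^n$ and $q_i=0$ on $U_j\cap\R^n$ for $j\ne i$.

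For the Archimedean refinement the scheme is identical, but Lemma~\ref{lem:pos} is replaced by Lemma~\ref{lem:rep-1.Pu2}: the witness $R-x_1^2-\dots-x_n^2\in Q(g)[x]$ remains a witness for the enlarged module $Q(g,h_{i,1},-h_{i,1},\dots)[x]$, so the enlarged semi-algebraic set inherits the Archimedean property and $-1$ admits a certificate of the same shape with $\sigma_i\in Q(g)[x]$; folding the $\pm h_{i,l}$ contributions uses the identity $h_{i,l}\Sigma^2[x]+(-h_{i,l})\Sigma^2[x]=h_{i,l}\R[x]$. The resulting $q_i=-a_i\sigma_ip_i^2$ then lies in $Q(g)[x]$ because quadratic modules are preserved under multiplication by sums of squares.

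The main obstacle is the sub-case $a_i<0$: producing an element of $P(g)[x]$ (or $Q(g)[x]$) that equals $-1$ on $U_i\cap\R^n$ requires invoking the empty-set Positivstellensatz on the enlarged system and then carefully peeling off the part of the resulting certificate that survives reduction modulo the $h_{i,l}$. Verifying that this peeled-off part is indeed in $P(g)[x]$ (respectively $Q(g)[x]$) is the only non-formal bookkeeping step; everything else is a direct evaluation at points of the $U_j\cap\R^n$.
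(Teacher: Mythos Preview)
Your proof is correct and follows essentially the same strategy as the paper's: interpolation polynomials $p_i$ from Lemma~\ref{lem:interpolate}, combined with the empty-set Positivstellensatz (Lemma~\ref{lem:pos}, respectively Lemma~\ref{lem:rep-1.Pu2}) applied to the components on which $f$ may fail to be nonnegative. The only cosmetic difference is that the paper groups all $U_j$ with $U_j\cap S(g)=\emptyset$ into a single variety $W_0$ and handles it at once via the decomposition $f=(f+\tfrac12)^2-(f^2+\tfrac14)$, whereas you split by the sign of $a_i$ and treat each negative-value component individually with $q_i=-a_i\,\sigma_i\,p_i^2$.
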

\begin{proof}
Let $W_0$ be the union of all $U_j$ whose intersection with $S(g)$ is empty. 
Then $W_0$ is a complex variety defined by finitely many polynomials in $\R[x]$.
Let $W_1,\dots,W_r$ be the remaining $U_j$. 
Thus $f$ is constant on $W_j$, for $j=1,\dots,r$.
Further, since $f$ is non-negative on the non-empty set $S(g)\cap W_j$, there exists $\alpha_j>0$ such that $f=\alpha_j$ on $W_j$, $j=1,\dots,r$.
Set $q_j(x) = \alpha_j\in\Sigma^2[x]$ then we get $f = q_j$ on $W_j$.
Observe that $U= W_0 \cup W_1 \cup \dots \cup W_r$, where $W_0,\dots,W_r$ are pairwise disjoint.
By Lemma \ref{lem:interpolate}, there exist polynomials
$p_0,p_1,\dots,p_r \in \R[x]$ such that $p_i(W_j) = \delta_{ij}$.
By assumption, it holds that $W_0 \cap S(g) = \emptyset$ and hence by Theorem \ref{lem:pos}, there exists $v_0 \in P(g)[x]$ such that
$-1 = v_0$ on $W_0\cap \R^{n}$. 
We have $f = s_1 - s_2$ for the SOS polynomials $s_1 =(f+\frac{1}{2})^2$ and $s_2 = f^2+\frac{1}{4}$.
It implies that $f = s_1 + v_0 s_2$ on $W_0\cap \R^{n}$.
Let $q_0 = s_1 +v_0 s_2 \in P(g)[x]$.  
Now let $q =\sum_{i=0}^r q_i p_i^2$ then $q\in P(g)[x]$ and we obtain $f - q$ vanishes on $U\cap \R^n$.
Assume that $S(g)$ satisfies the Archimedean condition. 
Following Lemma \ref{lem:rep-1.Pu2}, we can take $v_0\in Q(g)[x]$, which implies that $q_0$ is in $Q(g)[x]$ then so is $q$.
\end{proof}

\subsection{Proof of the representations}
\label{sec:proof.rep}
Recall the vector of variables $\bar\lambda:=(\lambda_0,\dots,\lambda_m)$. 
For simplicity of notation, set
\begin{equation}
    \{\lambda_0=0\}:=\{(x,\bar\lambda)\in\mathbb R^{n+m+1}\,:\,\lambda_0=0\}\,.
\end{equation}
Let $\pi:\R^{n+m+1}\to \R^n$ be the projection defined by 
\begin{equation}
    \pi(x,\bar\lambda)=x\,,\,\forall x\in\R^n\,,\,\forall \bar\lambda\in\R^{m+1}\,.
\end{equation}
We characterize the set of critical points in the following lemma:
\begin{lemma}\label{lem:equa}
Let $f,g_1,\dots,g_m\in\R[x]$.
Let $h_\text{FJ}$ be defined as in \eqref{eq:.polyFJ}.
Set $g:=(g_1,\dots,g_m)$.
Then it holds that $C(g)=\pi(V(h_\text{FJ})\cap \{\lambda_0=0\})$.
\end{lemma}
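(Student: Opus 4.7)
The proof should be a direct unpacking of the definitions, proving the two inclusions separately. The idea is to observe that the condition $\lambda_0 = 0$ collapses the Fritz John system $h_{\text{FJ}} = 0$ into exactly the linear dependence relation among the columns of $\varphi^g(x)$.

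For the inclusion $\pi(V(h_{\text{FJ}}) \cap \{\lambda_0 = 0\}) \subset C(g)$, I would take $(x, \bar\lambda) \in V(h_{\text{FJ}})$ with $\lambda_0 = 0$. Substituting $\lambda_0 = 0$ into the components of $h_{\text{FJ}}$ from \eqref{eq:.polyFJ} yields $\sum_{j=1}^m \lambda_j \nabla g_j(x) = 0$, $\lambda_j g_j(x) = 0$ for $j=1,\dots,m$, and $\sum_{j=1}^m \lambda_j^2 = 1$. The last equation forces $(\lambda_1,\dots,\lambda_m) \ne 0$, and the first two equations say that the non-zero vector $(\lambda_1,\dots,\lambda_m)$ is a linear combination witnessing that the $m$ columns of $\varphi^g(x)$ are linearly dependent over $\R$. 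Hence $\rank(\varphi^g(x)) < m$, so $x \in C(g)$.

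For the reverse inclusion $C(g) \subset \pi(V(h_{\text{FJ}}) \cap \{\lambda_0 = 0\})$, I would start with $x \in C(g)$, so the columns of $\varphi^g(x)$ are linearly dependent; pick any non-zero $(\mu_1,\dots,\mu_m) \in \R^m$ in their null relation and normalize to $\lambda_j := \mu_j / \sqrt{\sum_i \mu_i^2}$, so that $\sum_{j=1}^m \lambda_j^2 = 1$. The null relation then reads $\sum_{j=1}^m \lambda_j \nabla g_j(x) = 0$ together with $\lambda_j g_j(x) = 0$ for $j=1,\dots,m$. Setting $\lambda_0 = 0$, the defining equations of $h_{\text{FJ}}$ are all satisfied at $(x,\lambda_0,\lambda_1,\dots,\lambda_m)$, so this point lies in $V(h_{\text{FJ}}) \cap \{\lambda_0 = 0\}$, and its projection is $x$.

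There is no real obstacle here; the statement is essentially a reformulation. The only mild subtlety is remembering to record that the normalization is possible because the rank-deficiency in the definition of $C(g)$ guarantees a \emph{non-zero} dependence vector, which is precisely what the sphere condition $1 - \sum_{j=0}^m \lambda_j^2 = 0$ encodes on the Fritz John side.
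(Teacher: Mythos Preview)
Your proposal is correct and is essentially the same argument as the paper's: the paper presents it as a single chain of equivalences ($x\in C(g)\Leftrightarrow \rank(\varphi^g(x))<m \Leftrightarrow \exists\lambda\in\R^m$ with $\sum\lambda_j^2=1$, $\sum\lambda_j\nabla g_j(x)=0$, $\lambda_j g_j(x)=0 \Leftrightarrow \dots \Leftrightarrow x\in\pi(V(h_{\text{FJ}})\cap\{\lambda_0=0\})$), whereas you split it into the two inclusions, but the content is identical.
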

\begin{proof}
The result follows thanks to the following equivalences:
\begin{equation}
    \begin{array}{rl}
         &  x\in C(g)\\
        \Leftrightarrow & \rank(\varphi^g(x)) < m\\
        \Leftrightarrow & \exists \lambda\in\R^{m}\,:\, \sum_{j=1}^m\lambda_j^2=1\,,\, \sum_{j=1}^m \lambda_j \nabla g_j(x)=0\,,\, \lambda_j g_j(x) =0\\
        \Leftrightarrow & \exists \bar \lambda\in\R^{m+1}\,:\,
        \sum_{j=0}^m\lambda_j^2=1\,,\,\lambda_0=0\,,\,\lambda_0\nabla f(x)=\sum_{j=1}^m \lambda_j \nabla g_j(x)\,,\, \lambda_j g_j(x) =0\\
        \Leftrightarrow & \exists \bar \lambda\in\R^{m+1}\,:\,(x,\bar\lambda)\in V(h_\text{FJ})\cap \{\lambda_0=0\}\\
        \Leftrightarrow& x\in\pi(V(h_\text{FJ})\cap \{\lambda_0=0\})\,.
    \end{array}
\end{equation}
\end{proof}
The following lemma is given in \cite[Theorem 1.8.1]{pham2016genericity}:
\begin{lemma}\label{lem:semial.func.anal}
Let $f:(a, b)\to \R$ be a semi-algebraic
function. Then there are $a = a_0 < a_1 < \dots < a_s < a_{s+1} = b$ such that, for each $i = 0,\dots, s$, the restriction $f|_{(a_i,a_{i+1})}$ is analytic.
\end{lemma}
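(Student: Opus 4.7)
The plan is to exploit the algebraic nature of $f$: its graph satisfies a polynomial equation, and outside a finite ``discriminant'' locus the implicit function theorem produces analytic branches that $f$ must select among.

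First I would note that the graph $\Gamma_f:=\{(x,f(x))\,:\,x\in(a,b)\}\subset\R^2$ is a $1$-dimensional semi-algebraic set, hence contained in the zero set of some nonzero polynomial $P(x,y)\in\R[x,y]$ (otherwise Tarski--Seidenberg applied to the projection $\Gamma_f\to\R$ would contradict the dimension). Choosing $P$ of minimal $y$-degree and squarefree in $y$, both its leading coefficient $c(x)$ in $y$ and its discriminant $\Delta(x)$ with respect to $y$ are nonzero polynomials in $x$, so the set $Z:=\{x\in(a,b)\,:\,c(x)\Delta(x)=0\}$ is finite.

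Next, on each connected component of $(a,b)\setminus Z$ the equation $P(x,y)=0$ has $\deg_yP$ distinct simple roots, and each depends analytically on $x$ by the implicit function theorem, giving finitely many analytic branches $\phi_1,\dots,\phi_k$ whose graphs make up $\{P=0\}$ above that component. The restriction of $f$ is a semi-algebraic selection among these branches. For each $j$ the set $\{x\,:\,f(x)=\phi_j(x)\}$ is a semi-algebraic subset of a real interval, and by the structure theorem for semi-algebraic subsets of $\R$ it is a finite union of points and open subintervals. Inserting into the partition the points of $Z$ together with the endpoints of all these subintervals for $j=1,\dots,k$, and relabelling, produces the required $a=a_0<a_1<\dots<a_{s+1}=b$ with $f|_{(a_i,a_{i+1})}$ equal to a single analytic branch $\phi_j$, hence analytic.

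The main obstacle is the last step: a priori $f$ could switch between analytic branches infinitely often. This is precisely where the semi-algebraic hypothesis is essential, since an arbitrary measurable selection need not give a piecewise-analytic function. A more conceptual alternative, which sidesteps the explicit construction of $P$, is to invoke the cylindrical algebraic decomposition of $\Gamma_f$ adapted to the projection onto the first coordinate: it directly yields a finite cell decomposition of $(a,b)$ over each open cell of which $\Gamma_f$ is the graph of a Nash function. Either route gives the conclusion.
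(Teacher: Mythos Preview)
The paper does not give a proof of this lemma; it simply cites \cite[Theorem~1.8.1]{pham2016genericity}. There is therefore no argument in the paper to compare against.

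Your sketch is essentially correct and follows the standard route to the result. One small imprecision: on a connected component of $(a,b)\setminus Z$ the equation $P(x,y)=0$ has a constant number of \emph{real} simple roots, not necessarily $\deg_y P$ of them (some roots may be non-real); but this is all you need, since $f(x)$ is real and must coincide with one of the real branches. It is also worth making explicit that the branches $\phi_j$ are themselves semi-algebraic (Nash) functions, so that each set $\{x:f(x)=\phi_j(x)\}$ is genuinely semi-algebraic and the structure theorem for semi-algebraic subsets of $\R$ applies. The alternative you mention via cylindrical algebraic decomposition of $\Gamma_f$ is the cleanest packaging of the same ideas and is close to how the cited reference argues.
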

The following lemma follows from the mean value theorem:
\begin{lemma}\label{lem:mean.val}
Let $f:[0,1]\to \R$ be a continuous piecewise-differentiable function, i.e., there exist $0=a_1<\dots<a_r=1$ such that $f$ is continuous, and $f$ is differentiable on each open interval $(a_i,a_{i+1})$.
Assume that $f$ has zero subgradient.
Then $f(0)=f(1)$.
\end{lemma}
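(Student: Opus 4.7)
The plan is to reduce the statement to the classical fact that a differentiable function with zero derivative on an interval is constant, and then use continuity to glue together the constant values across the finitely many pieces.

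First I would fix an index $i\in\{1,\dots,r-1\}$ and work on the open interval $(a_i,a_{i+1})$. By hypothesis $f$ is differentiable there, and having ``zero subgradient'' on this piece amounts to $f'\equiv 0$ on $(a_i,a_{i+1})$. Applying the standard mean value theorem to any two points $s<t$ in $(a_i,a_{i+1})$ gives $f(t)-f(s)=f'(\xi)(t-s)=0$ for some $\xi\in(s,t)$, so $f$ is constant on the open interval $(a_i,a_{i+1})$. Call this constant $c_i$.

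Next I would use the continuity of $f$ on the closed interval $[0,1]$, and in particular at the endpoints $a_i$ and $a_{i+1}$, to extend the equality $f\equiv c_i$ from $(a_i,a_{i+1})$ to the closure $[a_i,a_{i+1}]$: $f(a_i)=\lim_{t\downarrow a_i}f(t)=c_i$ and similarly $f(a_{i+1})=c_i$. Hence $f(a_i)=f(a_{i+1})$ for every $i=1,\dots,r-1$. Chaining these equalities along the partition yields
\begin{equation}
f(0)=f(a_1)=f(a_2)=\dots=f(a_r)=f(1),
\end{equation}
which is the desired conclusion.

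The argument is entirely routine; there is no real obstacle. The only small subtlety is to be explicit about the interpretation of ``zero subgradient'' as $f'=0$ on each open piece where $f$ is assumed differentiable, so that the classical mean value theorem is directly applicable. Everything else follows from combining the piecewise constancy with the global continuity across the finitely many junction points $a_2,\dots,a_{r-1}$.
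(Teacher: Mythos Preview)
Your proof is correct and follows essentially the same approach as the paper: apply the mean value theorem on each open subinterval $(a_i,a_{i+1})$ to obtain $f(a_i)=f(a_{i+1})$, then chain these equalities across the partition. The paper's version is just a terser statement of exactly this argument.
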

\begin{proof}
By using the mean value theorem on each open interval $(a_i,a_{i+1})$, we get $f(a_i)=f(a_{i+1})$.
Hence $f(0)=f(a_1)=\dots=f(a_{r})=f(1)$ yields the result.
\end{proof}

The following lemma extends \cite[Lemma 3.3]{demmel2007representations} to the case of varieties defined by the Fritz John conditions:
\begin{lemma}\label{lem:constant}
Let $f,g_1,\dots,g_m\in\R[x]$. 
Assume that $f(C(g))$ with $g:=(g_1,\dots,g_m)$ is finite.
Let $h_\text{FJ}$ be defined as in \eqref{eq:.polyFJ}.
Let $W$ be a semi-algebraically path connected component of $V(h_\text{FJ})$. Then $f$ is constant on $W$.
\end{lemma}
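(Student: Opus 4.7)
The plan is to pick two points in $W$, join them by a continuous semi-algebraic path, and show that $f \circ \pi$ is constant along the path by a piecewise mean value argument on the semi-algebraic function $F(t)=f(\pi(\phi(t)))$. The Fritz John equations $\lambda_0 \nabla f = \sum_j \lambda_j \nabla g_j$ and $\lambda_j g_j = 0$ will kill the derivative of $F$ on the open pieces where $\lambda_0$ is not identically zero, while Lemma \ref{lem:equa} together with the finiteness of $f(C(g))$ handles the pieces where $\lambda_0$ vanishes identically.

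More precisely, given $p_0,p_1\in W$, semi-algebraic path connectedness produces a continuous semi-algebraic map $\phi=(x(\cdot),\bar\lambda(\cdot)):[0,1]\to W$ joining them. By Lemma \ref{lem:composit.semi-al} each coordinate of $\phi$ and also $F=f\circ\pi\circ\phi$ are semi-algebraic. Applying Lemma \ref{lem:semial.func.anal} to each coordinate of $\phi$ and to $F$, and taking a common refinement, I obtain $0=a_0<a_1<\dots<a_{s+1}=1$ such that on every open interval $I_i=(a_i,a_{i+1})$ the restrictions of $x(t)$, $\bar\lambda(t)$, and $F(t)$ are real analytic. It then suffices, by Lemma \ref{lem:mean.val}, to prove that $F'\equiv 0$ on each $I_i$.

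Fix such an $I_i$ and split into cases. If $\lambda_0(t)\equiv 0$ on $I_i$, then $\phi(t)\in V(h_\text{FJ})\cap\{\lambda_0=0\}$ for all $t\in I_i$, so by Lemma \ref{lem:equa} we have $x(t)\in C(g)$; hence $F(I_i)\subset f(C(g))$ is finite, and continuity of the analytic function $F$ on the interval forces $F$ to be constant, so $F'\equiv 0$. If instead $\lambda_0(t)\not\equiv 0$ on $I_i$, then on $I_i$ one uses the identity $\lambda_0(t)\nabla f(x(t))=\sum_{j=1}^m \lambda_j(t)\nabla g_j(x(t))$ to write
\begin{equation*}
\lambda_0(t)\,F'(t) = \sum_{j=1}^m \lambda_j(t)\,\nabla g_j(x(t))\cdot x'(t).
\end{equation*}
Differentiating the complementary slackness relation $\lambda_j(t)g_j(x(t))\equiv 0$ gives $\lambda_j(t)\nabla g_j(x(t))\cdot x'(t)=-\lambda_j'(t)g_j(x(t))$, and since $\lambda_j(t)g_j(x(t))$ is a product of two real analytic functions identically zero on $I_i$, one of the factors vanishes identically on $I_i$; in either case $\lambda_j'(t)g_j(x(t))\equiv 0$. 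Therefore $\lambda_0(t)F'(t)\equiv 0$ on $I_i$, and since $\lambda_0$ has only isolated zeros on $I_i$ while $F'$ is analytic, $F'\equiv 0$ on $I_i$.

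The main obstacle is the second case, where the equation $\lambda_0\nabla f=\sum_j\lambda_j\nabla g_j$ gives only $\lambda_0 F'$ and not $F'$ directly; the rescue is the analyticity-driven dichotomy "$\lambda_j\equiv 0$ or $g_j\circ x\equiv 0$" on each piece, which is what converts the Fritz John equations into a genuine vanishing of $F'$. Once $F'\equiv 0$ on each $I_i$ is established, Lemma \ref{lem:mean.val} yields $F(0)=F(1)$, i.e.\ $f(\pi(p_0))=f(\pi(p_1))$. Since $p_0,p_1\in W$ were arbitrary, $f$ is constant on $W$. \eoproof
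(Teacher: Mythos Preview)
Your proof is correct and shares the paper's overall strategy: join two points of $W$ by a semi-algebraic path, split $[0,1]$ into finitely many pieces, treat pieces on which $\lambda_0$ vanishes via Lemma~\ref{lem:equa} and the finiteness of $f(C(g))$, and treat the remaining pieces via the Fritz John equations together with Lemma~\ref{lem:mean.val}.

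The difference lies in the decomposition and in the handling of the $\lambda_0\not\equiv 0$ case. The paper partitions $[0,1]$ according to the (semi-algebraic, hence finite-interval) zero set of $\lambda_0(\tau)$; on each open interval where $\lambda_0\ne 0$ it introduces the Lagrangian $L(x,\bar\lambda)=f(x)-\sum_j(\lambda_j/\lambda_0)\,g_j(x)$, observes that $L=f$ along the path, and asserts that $L\circ\phi$ has zero subgradient before invoking Lemma~\ref{lem:mean.val}. You instead refine $[0,1]$ directly into pieces on which all coordinates of $\phi$ (and $F$) are real analytic, and then exploit the integral-domain property of real analytic functions on an interval: from $\lambda_j(t)\,g_j(x(t))\equiv 0$ you deduce that either $\lambda_j\equiv 0$ or $g_j(x(\cdot))\equiv 0$, whence $\lambda_j'(t)\,g_j(x(t))\equiv 0$ and therefore $\lambda_0 F'\equiv 0$, giving $F'\equiv 0$ since $\lambda_0$ has only isolated zeros. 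This dichotomy is precisely what is needed to justify the paper's compressed claim that $L\circ\phi$ has ``zero subgradient'' (the $\partial L/\partial\lambda_j=-g_j/\lambda_0$ terms do not vanish pointwise in general), so your argument makes that step fully explicit. In exchange, the paper's Lagrangian packaging makes the connection to classical optimality theory more visible. The two routes are close cousins; yours is slightly more self-contained.
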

\begin{proof} 
Let $(x^{(0)},\bar \lambda^{(0)})$ and $(x^{(1)},\bar \lambda^{(1)})$ in $W$. 
We claim that $f(x^{(0)}) = f(x^{(1)})$.
By assumption, there exists a continuous piecewise-differentiable path $\phi(\tau) = (x(\tau), \bar \lambda(\tau))$, for $\tau\in[0,1]$, lying inside $W$ such that $\phi(0) = (x^{(0)},\bar \lambda^{(0)})$ and $\phi(1) = (x^{(1)},\bar \lambda^{(1)})$ (see, e.g., \cite[Theorem 1.8.1]{pham2016genericity}).
Since $\tau\mapsto\lambda_0(\tau)$ is continuous on $[0,1]$, the set $\lambda_0^{-1}(0)\subset [0,1]$ is closed.
Then there exists a sequence of intervals $[a_j,b_j]\subset[0,1]$ for $j=1,\dots,r$ such that
\begin{itemize}
    \item $0\le a_1\le b_1< a_2\le b_2<\dots< a_r\le b_r\le 1$;
    \item $\forall \tau\in[0,1]\,,\,\lambda_0(\tau)=0 \Leftrightarrow \tau\in\cup_{j=1}^r[a_j,b_j]$.
\end{itemize}
We claim that $\tau\mapsto f(x(\tau))$ is constant on $[a_j,b_j]$  for $j=1,\dots,r$. 
Let $j\in\{1,\dots,r\}$ be fixed. 
Assume by contradiction that there are $\tau_1,\tau_2\in [a_j,b_j]$ such that $\tau_1<\tau_2$ and $f(x(\tau_1))\ne f(x(\tau_2))$.
Since $\tau\mapsto f(x(\tau))$ is continuous on $[a_j,b_j]$, the set $f(x([\tau_1,\tau_2]))$ is infinite. 
This contradicts the assumption $f(C(g))=f(\pi(V(h_\text{FJ})\cap \{\lambda_0=0\}))$ (according to Lemma \ref{lem:equa})
is finite since $f(\pi(V(h_\text{FJ})\cap \{\lambda_0=0\}))\supset f(x([\tau_1,\tau_2]))$ which is due to the fact that
\begin{equation}
    \pi(V(h_\text{FJ})\cap \{\lambda_0=0\})\supset \pi(W\cap \{\lambda_0=0\}) \supset x([\tau_1,\tau_2])\,.
\end{equation}

On the other hand $\tau\mapsto\lambda_0(\tau)$ has no zero value on each of the following open intervals:
\begin{equation}\label{eq:intervals.open}
    (0,a_1),(b_1,a_2),\dots,(b_{r-1},a_r),(b_r,1)
\end{equation}
We claim that $\tau\mapsto f(x(\tau))$ is constant on each of open intervals in \eqref{eq:intervals.open}.

Let us prove this for the interval $(0,a_1)$. 
The proof  for the other intervals is similar.
Let $\tau_1,\tau_2$ be arbitrary in $(0,a_1)$.
The Lagrangian function
\begin{equation}\label{eq:Lagran}
    L(x,\bar \lambda) = f(x)-\sum_{j=1}^m \frac{\lambda_j}{\lambda_0} g_j (x)\,.
\end{equation}
is equal to $f(x)$ on $V(h_\text{FJ})\backslash \{\lambda_0=0\}$, which contains $\phi([\tau_1,\tau_2])$. 
By Lemma \ref{lem:composit.semi-al}, the function $L\circ \phi$ is semi-algebraic.
Moreover, the function $L\circ \phi$ is continuous since $L$ and $\phi$ are continuous.
It implies that $L\circ \phi$ is a continuous piecewise-differentiable function thanks to Lemma \ref{lem:semial.func.anal}.
Note that the function
$L\circ \phi$
has zero subgradient on $[\tau_1,\tau_2]$.
From Lemma \ref{lem:mean.val}, it follows that $f(x (\tau_1) )=(L\circ \phi)(\tau_1)= (L\circ \phi)(\tau_2)= f(x (\tau_2) )$.
We now obtain $f(x(\tau_1))$ = $f(x(\tau_2))$.

By its continuity, $\tau\mapsto f(x(\tau))$ is  constant on the following closed intervals: \begin{equation}\label{eq:intervals}
    [0,a_1],[b_1,a_2],\dots,[b_{r-1},a_r],[b_r,1]
\end{equation}
It implies that $\tau\mapsto f(x(\tau))$ is  constant on $[0,1]$ since it is constant on each $[a_j,b_j]$.
We now obtain $f (x^{(0)})$ = $f (x^{(1)})$, and hence $f$ is constant on $W$.
\end{proof}

\subsubsection*{Proof of Theorem \ref{theo:rep}}
\begin{proof}
Using Lemma \ref{eq:component.semi}, we decompose $V(h_\text{FJ})$ into semi-algebraically path connected components:
$Z_1,\dots,Z_s$.
Lemma \ref{lem:constant} yields that $f$ is constant on $Z_i$.
Thus $f(V(h_\text{FJ}))$ is finite.
Observe that $V(h_\text{FJ})=V_\C(h_\text{FJ})\cap\R^{n+m+1}$.
By using Lemma \ref{lem:quadra}, we obtain a finite sequence of subsets $W_{1},\dots,W_{r}$ such that the following conditions hold:
\begin{itemize}
\item $W_{1},\dots,W_{r}$ are pairwise disjoint complex varieties defined by finitely many polynomials in $\R[x,\bar\lambda]$;
\item for $j=1,\dots,r$, $W_{j}\subset V_\C(h_\text{FJ})$, and  $f$ is constant on $W_{j}$;
\item $(W_{1}\cup\dots\cup W_{r})\cap \R^{n+m+1}=V_\C(h_\text{FJ})\cap \R^{n+m+1}$.
\end{itemize}
Let $D$ be the union of $W_{1},\dots,W_{r}$.
Let $b=1-\lambda_0^2-\dots-\lambda_m^2$.
From this, Lemma \ref{lem:const.func} yields that there exists $p\in P(g,b)[x,\bar\lambda]$ such that $f-p$ vanishes on $D\cap\R^{n+m+1}=V(h_\text{FJ})$.
We write 
\begin{equation}\label{eq:preo.rep.b}
p=\sum_{\alpha\in\{0,1\}^m}\sigma_\alpha g^\alpha+b \sum_{\beta\in\{0,1\}^m}\psi_\beta g^\beta\,,
\end{equation}
for some $\sigma_\alpha,\psi_\beta\in\Sigma^2[x,\bar{\lambda}]$.
Let 
$q=\sum_{\alpha\in\{0,1\}^m}\sigma_\alpha g^\alpha\in P(g)[x,\bar\lambda]$.
Since $b=0$ on $V(h_\text{FJ})$, it holds that $f=p=q$ on $V(h_\text{FJ})$.

Assume that $S(g)$ satisfies the Archimedean condition. Then there exists $R>0$ such that $g_{m+1}=R-x_1^2-\dots-x_n^2\in Q(g)[x]$.
It implies that $S(g,b)$ with $b=1-\lambda_0^2-\dots-\lambda_m^2$ satisfies the Archimedean condition.
It is because
\begin{equation}\label{eq:large.ball}
(R+1)-x_1^2-\dots-x_n^2-\lambda_0^2-\dots-\lambda_m^2=b+g_{m+1}\in Q(g,b)[x,\bar\lambda]\,.
\end{equation}
From this, Lemma \ref{lem:const.func} shows that there exists $p\in Q(g,b)[x,\bar\lambda]$ such that $f-p$ vanishes on $D\cap\R^{n+m+1}=V(h_\text{FJ})$.
We write 
\begin{equation}\label{eq:quadra.rep.b}
p=\sigma_0+\sum_{j=1}^m\sigma_j g_j+b\sigma_{m+1}\,,
\end{equation}
for some $\sigma_j\in\Sigma^2[x,\bar{\lambda}]$.
Let 
$q=\sigma_0+\sum_{j=1}^m\sigma_j g_j\in Q(g)[x,\bar\lambda]$.
Since $b=0$ on $V(h_\text{FJ})$, $f=p=q$ on $V(h_\text{FJ})$.
This completes the proof. 
\end{proof}
\subsection{Proof of the genericity of the assumption}
\label{sec:proof.gen}
Given $p=(p_1,\dots,p_m)$ with $p_1,\dots,p_m\in\R[x]$, we denote the Jacobian of $p$ by $\nabla p=[\nabla p_1\dots \nabla p_m]$.
Given a complex matrix $A$, we denote by $\rank_\C(A)$  the dimension of the vector space generated by the columns of $A$ over $\C$.
Note that $\rank(A)$ is the dimension of the vector space generated by the columns of $A$ over $\R$.

In the following lemma, we recall Nie's result in \cite{nie2012discriminants} involving discriminants:
\begin{lemma}\label{lem:discri}
Let $p_1,\dots, p_m\in\R[x]$ be of degrees $d_1,\dots,d_m$, respectively, and $m \le n$. 
Suppose at least one $d_i > 1$.  
Then there exists a polynomial $\psi$ in the coefficients of $p_1,\dots,p_m$ having the property that with $p:=(p_1,\dots, p_m)$, $\psi(p) = 0$ if and only if there exists
$u \in \C^n$ satisfying
\begin{equation}
p_1(u) =\dots= p_m(u) = 0\quad\text{ and }\quad\rank_\C(\nabla p(u)) < m\,.
\end{equation}
\end{lemma}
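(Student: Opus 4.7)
The plan is to apply Nie's discriminant result (Lemma \ref{lem:discri}) not to $g_1,\dots,g_m$ directly but to a transformed system obtained by introducing $m$ slack variables that turn the inequality constraints into equalities. Concretely, let $y=(y_1,\dots,y_m)$ be fresh variables and set
\begin{equation*}
    \tilde g_j(x,y)\ :=\ g_j(x)-y_j^2\ \in\ \R[x,y],\qquad j=1,\dots,m.
\end{equation*}
These are $m$ polynomials in $n+m$ variables, and since $\tilde g_j$ always contains the monomial $y_j^2$ its degree is $\max(d_j,2)\geq 2$. In particular $m\le n+m$ and at least one degree exceeds $1$, so Lemma \ref{lem:discri} applies and produces a polynomial $\tilde\psi$ in the coefficients of $\tilde g_1,\dots,\tilde g_m$ (viewed as generic polynomials in $n+m$ variables of these degrees) such that $\tilde\psi(\tilde g)=0$ if and only if there exists $(u,v)\in\C^{n+m}$ with $\tilde g_j(u,v)=0$ for all $j$ and $\rank_\C(\nabla \tilde g(u,v))<m$.

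Next I would define $\psi$ as follows. The coefficients of $\tilde g_j$ that correspond to monomials in $x$ alone are exactly the coefficients of $g_j$; the coefficient of $y_j^2$ is $-1$; all other coefficients are $0$. Substituting these fixed values into $\tilde\psi$ yields a polynomial $\psi$ in the coefficients of $g_1,\dots,g_m$, as required by the statement.

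The heart of the argument is then the contrapositive: show that if $C(g)\ne\emptyset$ then $\psi=0$ at the input data. Let $x\in C(g)$. By definition of $C(g)$ there exists a nonzero real vector $\lambda=(\lambda_1,\dots,\lambda_m)$ with $\sum_{j=1}^m\lambda_j\nabla g_j(x)=0$ and $\lambda_j g_j(x)=0$ for every $j$. Pick a complex square root $y_j\in\C$ of $g_j(x)$, so that $\tilde g_j(x,y)=0$ and $y_j=0$ whenever $g_j(x)=0$. The Jacobian has the block form
\begin{equation*}
    \nabla \tilde g(x,y)\ =\ \begin{bmatrix}\nabla g(x)\\ -2\diag(y)\end{bmatrix},
\end{equation*}
and applying this matrix to $\lambda$ produces $\sum_j\lambda_j\nabla g_j(x)=0$ in the top block, while the bottom block yields the vector $(-2\lambda_j y_j)_{j=1}^m$, which vanishes because for each $j$ either $\lambda_j=0$ or (via $\lambda_j g_j(x)=0$) $g_j(x)=0$ and hence $y_j=0$. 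Thus $\lambda$ is a nontrivial kernel vector of $\nabla\tilde g(x,y)$, giving $\rank_\C(\nabla\tilde g(x,y))<m$, whence $\tilde\psi(\tilde g)=0$ by Lemma \ref{lem:discri}, i.e.\ $\psi$ vanishes on the coefficients of $g$.

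The main delicate point—and the only place where the transformation has to be chosen with care—is precisely the transfer of the rank-deficiency witness: the complementarity slackness $\lambda_j g_j(x)=0$ in the definition of $C(g)$ must translate into $\lambda_j y_j=0$ on the complex slack variable, and the use of $y_j^2=g_j(x)$ (a \emph{complex} square root, which is why we work over $\C$ even for real $x$) is what makes this work without assuming anything about the signs of $g_j(x)$. All remaining verifications (that the degree hypothesis of Lemma \ref{lem:discri} holds, that $\tilde\psi(\tilde g)$ really is polynomial in the coefficients of $g$, and that the case $m=0$ is vacuous) are routine.
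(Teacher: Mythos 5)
Your proposal does not prove the stated lemma; it proves a different result. The statement to be established is Lemma \ref{lem:discri} itself --- the existence of a discriminant polynomial $\psi$ in the coefficients of an arbitrary system $p_1,\dots,p_m$ with $m\le n$ whose vanishing characterizes the existence of a singular complex common zero. Your argument opens with ``apply Nie's discriminant result (Lemma \ref{lem:discri}) \dots to a transformed system,'' i.e.\ it assumes the very lemma it is supposed to prove and uses it as a black box. What you have actually written out is the paper's proof of Theorem \ref{theo:gen} (the genericity of $C(g)=\emptyset$): the substitution $\tilde g_j=g_j(x)-y_j^2$, the degree check $\max(d_j,2)\ge 2$, and the transfer of the kernel vector $\lambda$ through the block Jacobian with blocks $\nabla g(x)$ and $-2\diag(y)$ using complex square roots $y_j$ of $g_j(x)$ are exactly the steps of Section \ref{sec:proof.gen}. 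As an argument for Theorem \ref{theo:gen} this is essentially correct and matches the paper, but as an argument for Lemma \ref{lem:discri} it is circular.

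A genuine proof of Lemma \ref{lem:discri} has nothing to do with slack variables or with $C(g)$. It is a statement in elimination theory: one must show that, in the space of coefficient vectors of systems of degrees $(d_1,\dots,d_m)$ with some $d_i>1$ and $m\le n$, the set of systems admitting a point $u\in\C^n$ with $p_1(u)=\dots=p_m(u)=0$ and $\rank_\C(\nabla p(u))<m$ is exactly the zero locus of a single polynomial --- Nie's multivariate discriminant, constructed via resultants of the system augmented by the maximal minors of its Jacobian after passing to the projective closure, together with a dimension count showing the projection of the relevant incidence variety is a hypersurface. None of this machinery appears in your proposal. Note that the paper itself does not reprove this lemma either: it is recalled verbatim from Nie \cite{nie2012discriminants}. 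If the intent was to take it as known, the honest answer is a citation, not the derivation of a downstream corollary that uses it.
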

Polynomial $\psi$ in Lemma \ref{lem:discri} is called a discriminant of $p$, denoted by $\Delta(p)$.
\subsubsection*{Proof of Theorem \ref{theo:gen}}
\begin{proof}
Let $y=(y_1,\dots,y_m)$.
Let $p_j(x,y)=g_j(x)-y_j^2\in\R[x,y]$. 
Then $\deg(p_j)>1$.
With $p=(p_1,\dots,p_m)$, we get
\begin{equation}
\nabla p(x,y)=\begin{bmatrix}
    \nabla g_1(x)& \dots& \nabla g_m(x)\\
    -2y_1&\dots&0\\
    .&\dots&.\\
    0&\dots&-2y_m
    \end{bmatrix}\,.
\end{equation}
Let $\psi(g):=\Delta(p)$.
Then $\psi$ is a polynomial in the coefficients of $g_1,\dots,g_m$.
We make the following implications:
\begin{equation}
\begin{array}{rl}
&C(g)\ne \emptyset\\
\Rightarrow& \exists x\in C(g)\\
\Rightarrow& \exists x\in\R^n\,:\,\rank(\varphi^g(x))<m\\
\Rightarrow& \exists x\in\R^n\,:\,\exists\lambda\in \R^{m}\backslash\{0\}\,:\,\sum_{j=1}^m\lambda_j\nabla g_j(x)=0\,,\,\lambda_jg_j(x)=0\\
\Rightarrow& \exists x\in\C^n\,:\,\exists\lambda\in \C^{m}\backslash\{0\}\,:\,\sum_{j=1}^m\lambda_j\nabla g_j(x)=0\,,\,\lambda_jg_j(x)=0\\
\Rightarrow& \exists (x,y) \in \C^{n+m}\,:\,g_j(x)=y_j^2\,,\,\exists \lambda\in \C^m \backslash\{0\}\,:\,\sum_{j=1}^m \lambda_j\nabla g_j(x)=0\,,\,-2\lambda_jy_j=0\\
\Rightarrow& \exists (x,y) \in \C^{n+m}\,:\,p_j(x,y)=0\,,\,\rank_\C(\nabla p(x,y)) < m\\
\Rightarrow& \Delta(p)= 0\qquad\text{(according to Lemma \ref{lem:discri})}\\
\Rightarrow& \psi(g)= 0\,.
\end{array}
\end{equation}
Thus it holds that if $\psi(g)\ne 0$, then $ C(g)=\emptyset$, yielding the result.
\end{proof}
\if{
\begin{remark}
Assume that $C(g)=\emptyset$. Let $x$ be a global minimizer of problem \eqref{eq:pop}. Then KKT condition holds at $x$.
\end{remark}
}\fi

\section{Illustrated examples}
\label{sec:examples}
In this section, we illustrate our Nichtnegativstellens\"atze stated in Theorem \ref{theo:rep} with several explicit examples that belong to the following cases of problem \eqref{eq:pop}:
\begin{itemize}
\item unconstrained case (see Example \ref{exam:unc}) and constrained case (see Examples \ref{exam:1} \ref{exam:2}, \ref{exam:infinite.critical}, \ref{exam:inf.minimizer}, \ref{exam:ball}, \ref{exam:box}, \ref{exam:simplex}, \ref{exam:not.attain}, \ref{exam:counterexample});
\item the Karush--Kuhn--Tucker conditions do not hold at any global minimizer (see Examples \ref{exam:1}, \ref{exam:2}, \ref{exam:infinite.critical}, \ref{exam:inf.minimizer});
\item the basic semi-algebraic set $S(g)$ is non-compact (see Example \ref{exam:unc}, \ref{exam:2}, \ref{exam:infinite.critical}, \ref{exam:inf.minimizer}) and the set of global minimizers is infinite
(see Example \ref{exam:inf.minimizer});
\item both the set of critical points $C(g)$ and its image under $f$ are empty (see Examples \ref{exam:ball}, \ref{exam:box}, \ref{exam:simplex}, \ref{exam:not.attain});
\item both the set of critical points $C(g)$ and its image under $f$ are finite (see Examples \ref{exam:1}, \ref{exam:2});
\item neither the set of critical points $C(g)$ nor its image under $f$ is finite (see Example \ref{exam:infinite.critical});
\item the set of critical points $C(g)$ is infinite but its image under $f$ is finite (see Example \ref{exam:inf.minimizer});
\item the minimal value $f^\star$ is not attained (see Example \ref{exam:not.attain});
\item the set $f(C)$ is infinite and there does not exist $q\in P(g)[x,\bar \lambda]$ such that $f-q$ vanishes on $V(h_\text{FJ})$ with $h_\text{FJ}$ defined as in \eqref{eq:.polyFJ} (see Example \ref{exam:counterexample});
\item the set $f(C)$ is infinite but there exists $q\in P(g)[x,\bar \lambda]$ such that $f-q$ vanishes on $V(h_\text{FJ})$ (see Example \ref{exam:inf.minimizer}).
\end{itemize}
We begin with the following example in the case where the basic semi-algebraic set is the whole space:
\begin{example}\label{exam:unc}
Consider the unconstrained case of problem \eqref{eq:pop}.
We can assume that $f^\star=0$, $m=1$, and $g=(g_1)=(1)$. It is obvious that $\rank(\varphi^g(x))=1=m$, which implies $C(g)$ is empty and so is $f(C(g))$. 
Using Theorem \ref{theo:rep}, we get $q\in\Sigma^2[x,\lambda_0,\lambda_1]$ such that $f-q$ vanishes on $V(h_{FJ})$ with $h_{FJ}$ defined as in  \eqref{eq:.polyFJ}, i.e.,
\begin{equation}
    h_\text{FJ}:=(\lambda_0\nabla f,\lambda_1,1-\lambda_0^2-\lambda_1^2)\,.
\end{equation}
With $\lambda_0=1$ and $\lambda_1=0$, $q(x,1,0)\in\Sigma^2[x]$, and $f-q(x,1,0)$ vanishes on $V(\nabla f)$. 
It is exactly the same as the result of Nie, Demmel and Surmfels in \cite{nie2006minimizing}.
\end{example}
In the following four examples, $f^\star=0$ is attained, but the Karush--Kuhn--Tucker conditions do not hold at any global minimizer for problem \eqref{eq:pop}:
\begin{example}\label{exam:1}
Consider the problem \eqref{eq:pop} with $n=m=1$, $f=x$, and $g=(g_1)=(-x^2)$. 
Then $0$ is the unique global minimizer for this problem.
The condition $\rank(\varphi^g(x))< m$ can be expressed as
\begin{equation}
\exists \lambda_1\in \R\backslash\{0\}\,:\,\lambda_1\nabla g_1(x)=-2\lambda_1 x=0\,,\,\lambda_1g_1(x)=-\lambda_1 x^2=0\,.
\end{equation}
It is equivalent to $x=0$. 
Thus $C(g)=\{0\}$ is singleton, and so is $f(C(g))$.
Since $1-x^2=1+g_1\in Q(g)[x]$, $S(g)$ satisfies the Archimedean condition. 
From this, Theorem \ref{theo:rep} yields that there exists $q\in Q(g)[x,\bar \lambda]$ with $\bar\lambda=(\lambda_0,\lambda_1)$ such that $f-q$ vanishes on $V(h_\text{FJ})$ with $h_\text{FJ}$ defined as in \eqref{eq:.polyFJ}, namely $h_\text{FJ}=(\lambda_0+2\lambda_1x,-\lambda_1 x^2,1-\lambda_0^2-\lambda_1^2)$.
It is easy to check that $V(h_\text{FJ})=\{(0,0,\pm 1)\}$.
It implies the selection $q=0$ in this case.
\end{example}
The following example involves the case where the basic semi-algebraic set is non-compact:
\begin{example}\label{exam:2}
Consider the problem \eqref{eq:pop} with $n=2$, $m=1$, $f=(x_1+1)^2+x_2^2-1$, and $g=(g_1)=(x_1^3-x_2^2)$. 
Then $(0,0)$ is the unique global minimizer for this problem.
The condition $\rank(\varphi^g(x))< m$ can be expressed as
\begin{equation}
\exists \lambda_1\in \R\backslash\{0\}\,:\,\lambda_1\nabla g_1(x)=\lambda_1\begin{bmatrix}
3x_1^2\\
-2x_2
\end{bmatrix}=0\,,\,\lambda_1g_1(x)=\lambda_1 (x_1^3-x_2^2)=0\,.
\end{equation}
It is equivalent to $x=(x_1,x_2)=0$. 
Thus $C(g)=\{0\}$ is singleton then so is $f(C(g))$.
From this, Theorem \ref{theo:rep} yields that there exists $q\in P(g)[x,\bar \lambda]$ with $\bar\lambda=(\lambda_0,\lambda_1)$ such that $f-q$ vanishes on $V(h_\text{FJ})$ with $h_\text{FJ}$ defined as in \eqref{eq:.polyFJ}, i.e.,
\begin{equation}
h_\text{FJ}=(\lambda_0\begin{bmatrix}
2(x_1+1)\\
2x_2
\end{bmatrix}-\lambda_1\begin{bmatrix}
3x_1^2\\
-2x_2
\end{bmatrix},\lambda_1 (x_1^3-x_2^2),1-\lambda_0^2-\lambda_1^2)\,.
\end{equation}
Let $(x,\bar\lambda)\in V(h_\text{FJ})$. 
We get $2(\lambda_0+\lambda_1)x_2=0$, so either $x_2=0$ or $\lambda_0+\lambda_1=0$.
If $\lambda_0+\lambda_1=0$, then we obtain $\lambda_0=-\lambda_1\in\{\pm \frac{1}{\sqrt{2}}\}$ and $2(x_1+1)+3x_1^2=0$, which has no real solution, and hence this is impossible.
Consequently $x_2=0$ from which we obtain $\lambda_1x_1^3=0$ which gives $\lambda_1=0$ or $x_1=0$. 
If $\lambda_1=0$, we obtain $\lambda_0\in\{\pm 1\}$ which implies $x_1=-1$.
If $x_1=0$, we get $\lambda_0=0$ which implies $\lambda_1\in\{\pm 1\}$.
These give $V(h_\text{FJ})=\{(-1,0,\pm 1,0),(0,0,0,\pm 1)\}$.
It is not hart to check that $q=g_1$ satisfies $q\in  P(g)[x,\bar \lambda]$ and $f-q$ vanishes on $V(h_\text{FJ})$.
\end{example}

In the last two examples, the sets of critical points are finite. 
We indicate in the following two examples the case where the sets of critical points are infinite:
\begin{example}\label{exam:infinite.critical}
Consider the problem \eqref{eq:pop} with $n=2$, $m=1$, $f=x_1+x_2^2$, and $g=(g_1)=(-x_1^2)$. 
Then $(0,0)$ is the unique global minimizer.
The condition $\rank(\varphi^g(x))< m$ can be expressed as
\begin{equation}
\exists \lambda_1\in \R\backslash\{0\}\,:\,\lambda_1\nabla g_1(x)=\lambda_1\begin{bmatrix}
-2x_1\\
0
\end{bmatrix}=0\,,\,\lambda_1g_1(x)=-\lambda_1 x_1^2=0\,.
\end{equation}
It is equivalent to $x_1=0$. 
Thus $C(g)=\{(0,t)\,:\,t\in\R\}$ is infinite, which implies that $f(C(g))=\{t^2\,:\,t\in\R\}=[0,\infty)$  is infinite.
Let $h_\text{FJ}$ be as in \eqref{eq:.polyFJ}.
Then we obtain
\begin{equation}
h_\text{FJ}=(\lambda_0\begin{bmatrix}
1\\
2x_2
\end{bmatrix}-\lambda_1\begin{bmatrix}
-2x_1\\
0
\end{bmatrix},-\lambda_1 x_1^2,1-\lambda_0^2-\lambda_1^2)\,.
\end{equation}
Let $(x,\bar\lambda)\in V(h_\text{FJ})$. 
We get $-\lambda_1x_1^2=0$, so either $\lambda_1=0$ or $x_1=0$.
If $\lambda_1=0$, we get $\lambda_0\in\{\pm 1\}$ (since $1=\lambda_0^2+\lambda_1^2$) and $\lambda_0=0$. 
Hence this is impossible.
Consequently, $x_1=0$ implies $\lambda_0=0$, which gives $\lambda_1\in\{\pm 1\}$. 
It implies that $V(h_\text{FJ})=\{(0,t,0,\pm 1)\,:\,t\in\R\}$.
Since $f(C(g))$ is infinite, the assumption of Theorem \ref{theo:rep} does not hold in this case.
However, it is not hart to check that $q=x_2^2$ satisfies $q\in  P(g)[x,\bar \lambda]$ and $f-q$ vanishes on $V(h_\text{FJ})$.
It is interesting to know clearly the extension of Theorem \ref{theo:rep} to the case where $f(C(g))$ is infinite.
\end{example}
In the last three examples, the sets of global minimizers are finite. 
The following example shows the case where the set of global minimizers is infinite:
\begin{example}\label{exam:inf.minimizer}
Consider the problem \eqref{eq:pop} with $n=2$, $m=1$, $f=x_1-x_2$ and $g=(g_1)=(-(x_1-x_2)^2)$. 
Then the set of global minimizers $\{(t,t)\,:\,t\in\R\}$ is infinite.
The condition $\rank(\varphi^g(x))< m$ can be expressed as
\begin{equation}
\exists \lambda_1\in \R\backslash\{0\}\,:\,\lambda_1\nabla g_1(x)=\lambda_1\begin{bmatrix}
-2(x_1-x_2)\\
-2(x_2-x_1)
\end{bmatrix}=0\,,\,\lambda_1g_1(x)=-\lambda_1 (x_1-x_2)^2=0\,.
\end{equation}
It is equivalent to $x_2=x_1$. 
Thus $C(g)=\{(t,t)\,:\,t\in\R\}$ is infinite.
However, $f(C(g))=\{0\}$ is singleton.
Thus Theorem \ref{theo:rep} yields that there exists $q\in P(g)[x,\bar \lambda]$ with $\bar\lambda=(\lambda_0,\lambda_1)$ such that $f-q$ vanishes on $V(h_\text{FJ})$ with $h_\text{FJ}$ defined as in \eqref{eq:.polyFJ}, i.e.,
\begin{equation}
h_\text{FJ}=(\lambda_0\begin{bmatrix}
1\\
-1
\end{bmatrix}-\lambda_1\begin{bmatrix}
-2(x_1-x_2)\\
-2(x_2-x_1)
\end{bmatrix},-\lambda_1 (x_1-x_2)^2,1-\lambda_0^2-\lambda_1^2)\,.
\end{equation}
It is easy to check that  $V(h_\text{FJ})=\{(t,t,0,\pm 1)\,:\,t\in\R\}$.
It yields the selection $q=0$ in this case.
\end{example}

We show in the following three examples of problem \eqref{eq:pop} that the set of critical points $C(g)$ is empty, and so is its image under $f$:

\begin{example}\label{exam:ball}
Consider the problem \eqref{eq:pop} with $m=1$ and $g=(1-x_1^2-\dots-x_n^2)$. 
Then $S(g)$ is the unit ball.
The condition $\rank(\varphi^g(x))< m$ can be expressed as
\begin{equation}
\exists \lambda_1\in \R\backslash\{0\}\,:\,\lambda_1\nabla g_1(x)=-2\lambda_1x=0\,,\,\lambda_1g_1(x)=\lambda_1 (1-x_1^2-\dots-x_n^2)=0\,.
\end{equation}
It is equivalent to $x=0$ and $1-x_1^2-\dots-x_n^2=0$, and hence it is impossible. 
Thus we get $C(g)=\emptyset$ which implies  $f(C(g))=\emptyset$.
Note that $S(g)$ satisfies the Archimedean condition in this case.
From this, Theorem \ref{theo:rep} yields that if $f$ is non-negative on $S(g)$, there exists $q\in Q(g)[x,\bar \lambda]$ with $\bar\lambda=(\lambda_0,\lambda_1)$ such that $f-q$ vanishes on $V(h_\text{FJ})$ with $h_\text{FJ}$ defined as in \eqref{eq:.polyFJ}, i.e.,
\begin{equation}
h_\text{FJ}=(\lambda_0\nabla f-2\lambda_1x,\lambda_1(1-x_1^2-\dots-x_n^2),1-\lambda_0^2-\lambda_1^2)\,.
\end{equation}
\end{example}
Let $e_1,\dots,e_n$ be the canonical basis of $\R^n$.
\begin{example}\label{exam:box}
Consider the problem \eqref{eq:pop} with $m=n$ and $g=(1-x_1^2,\dots,1-x_n^2)$.
Then $S(g)$ is the unit hypercube.
The condition $\rank(\varphi^g(x))< m$ can be expressed as
\begin{equation}
\exists \lambda\in \R^n\backslash\{0\}\,:\,\sum_{j=1}^n\lambda_j\nabla g_j(x)=-2\sum_{j=1}^n\lambda_jx_je_j=0\,,\,\lambda_jg_j(x)=\lambda_j (1-x_j^2)=0\,.
\end{equation}
It implies that $\lambda_j=\lambda_j (1-x_j^2)-\frac{1}{2}x_j(-2\lambda_jx_j)=0$, $j=1,\dots,m$ which contradicts $\lambda\ne 0$. 
Thus we get $C(g)=\emptyset$, which implies $f(C(g))=\emptyset$.
Note that $S(g)$ satisfies the Archimedean in this case.
From this, Theorem \ref{theo:rep} yields that if $f$ is non-negative on $S(g)$, there exists $q\in Q(g)[x,\bar \lambda]$ with $\bar\lambda=(\lambda_0,\lambda_1)$ such that $f-q$ vanishes on $V(h_\text{FJ})$ with $h_\text{FJ}$ defined as in \eqref{eq:.polyFJ}, i.e.,
\begin{equation}
h_\text{FJ}=(\lambda_0\nabla f-2\sum_{j=1}^n\lambda_jx_je_j,\lambda_1(1-x_1^2),\dots,\lambda_n(1-x_n^2),1-\lambda_0^2-\dots-\lambda_n^2)\,.
\end{equation}
\end{example}

\begin{example}\label{exam:simplex}
Consider the problem \eqref{eq:pop} with $m=n+1$ and $g=(x_1,\dots,x_n,1-x_1-\dots-x_n)$.
Then $S(g)$ is the unit simplex.
The condition $\rank(\varphi^g(x))< m$ can be expressed as
\begin{equation}
\exists \lambda\in \R^{n+1}\backslash\{0\}\,:\,\begin{cases}
\sum_{j=1}^n\lambda_j\nabla g_j(x)+\lambda_{n+1}\nabla g_{n+1}(x)=\sum_{j=1}^n(\lambda_j-\lambda_{n+1})e_j=0\,,\\\lambda_jg_j(x)=\lambda_j x_j=0\,,\,j=1,\dots,n\,,\\
\lambda_{n+1}g_{n+1}(x)=\lambda_{n+1}(1-x_1-\dots-x_n)=0\,.
\end{cases}
\end{equation}
It implies that $\lambda_j=\lambda_{n+1}$, $j=1,\dots,n$, and
$\lambda_{n+1}=\lambda_{n+1}\sum_{j=1}^nx_j=\sum_{j=1}^n\lambda_jx_j=0$.
Thus we get $\lambda=0$ which contradicts $\lambda\ne 0$. 
Thus we obtain $C(g)=\emptyset$ which implies $f(C(g))=\emptyset$.
Note that $S(g)$ satisfies the Archimedean in this case, as shown in \cite{jacobi2001distinguished}.
From this, Theorem \ref{theo:rep} shows that if $f$ is non-negative on $S(g)$, there exists $q\in Q(g)[x,\bar \lambda]$ with $\bar\lambda=(\lambda_0,\lambda_1)$ such that $f-q$ vanishes on $V(h_\text{FJ})$ with $h_\text{FJ}$ defined as in \eqref{eq:.polyFJ}, i.e.,
\begin{equation}
h_\text{FJ}=(\lambda_0\nabla f-\sum_{j=1}^n(\lambda_j-\lambda_{n+1})e_j,\lambda_1x_1,\dots,\lambda_nx_n,\lambda_{n+1}(1-\sum_{j=1}^nx_j),1-\sum_{j=0}^{n+1}\lambda_j^2)\,.
\end{equation}
\end{example}

Next, we consider an example of problem \eqref{eq:pop} whose optimal value $f^\star$ is not attained:
\begin{example}\label{exam:not.attain}
Consider the problem \eqref{eq:pop} with $n=2$, $m=1$, $f=x_1$ and $g=(g_1)=(x_1x_2^2-1)$. 
It is not hard to prove that $f^\star=0$.
Note that problem \eqref{eq:pop} does not have any global minimizer.
The condition $\rank(\varphi^g(x))< m$ can be expressed as
\begin{equation}
\exists \lambda_1\in \R\backslash\{0\}\,:\,\lambda_1\nabla g_1(x)=\lambda_1\begin{bmatrix}
x_2^2\\
2x_1x_2
\end{bmatrix}=0\,,\,\lambda_1g_1(x)=\lambda_1 (x_1x_2^2-1)=0\,.
\end{equation}
It implies that $\lambda_1x_2^2=0$ which gives $\lambda_1=\lambda_1x_1x_2^2=0$.
This contradicts $\lambda_1\ne 0$. 
Thus we get $C(g)=\emptyset$, in consequence, $f(C(g))=\emptyset$.
From this, Theorem \ref{theo:rep} yields that there exists $q\in P(g)[x,\bar \lambda]$ with $\bar\lambda=(\lambda_0,\lambda_1)$ such that $f-q$ vanishes on $V(h_\text{FJ})$ with $h_\text{FJ}$ defined as in \eqref{eq:.polyFJ}, namely
\begin{equation}
h_\text{FJ}=(\lambda_0\begin{bmatrix}
1\\
0
\end{bmatrix}-\lambda_1\begin{bmatrix}
x_2^2\\
2x_1x_2
\end{bmatrix},\lambda_1 (x_1x_2^2-1),1-\lambda_0^2-\lambda_1^2)\,.
\end{equation}
Let $(x,\bar\lambda)\in V(h_\text{FJ})$. 
We get $\lambda_1x_1x_2=0$ which gives $\lambda_1=\lambda_1x_1x_2^2=0$.
It follows that $\lambda_0=0$ which is impossible since $1=\lambda_1^2+\lambda_2^2=0$.
Thus $V(h_\text{FJ})=\emptyset$ yields the selection $q=0$.
\end{example}

We indicate the following counterexample, and consequently, the assumption that $f(C(g))$ is finite in Theorem \ref{theo:rep} cannot be removed:
\begin{example}\label{exam:counterexample}
Consider the problem \eqref{eq:pop} with $n=2$, $m=2$, $f=x_1+x_2$ and $g=(-x_1^2,-x_2^2)$. 
Then we get $f^\star=0$ and $(0,0)$ is the unique global minimizer for this problem.
It is not hard to prove the Karush--Kuhn--Tucker conditions do not hold for this problem at any global minimizer.
The condition $\rank(\varphi^g(x))< m$ can be expressed as
\begin{equation}\label{eq:rank.deficient}
\exists \lambda\in \R^2\backslash\{0\}\,:\,\begin{cases}
\lambda_1\nabla g_1(x)+\lambda_2\nabla g_2(x)=\lambda_1\begin{bmatrix}
-2x_1\\
0
\end{bmatrix}+\lambda_2\begin{bmatrix}
0\\
-2x_2
\end{bmatrix}=0\,,\\
\lambda_jg_j(x)=-\lambda_j x_j^2=0\,,\,j=1,2\,.
\end{cases}
\end{equation}
Since $\lambda\ne 0$, either $\lambda_1$ or $\lambda_2$ is non-zero.
If $\lambda_1\ne 0$, we get $x_1=0$.
If $\lambda_2\ne 0$, we get $x_2=0$.
It implies that $x_1=0$ or $x_2=0$.
Thus we get $C(g)=\{(t,0),(0,t)\,:\,t\in\R\}$ which gives that $f(C(g))=\R$  is infinite.
Let $h_\text{FJ}$ be as in \eqref{eq:.polyFJ}.
Then we obtain
\begin{equation}
h_\text{FJ}=(\lambda_0\begin{bmatrix}
1\\
1
\end{bmatrix}-\lambda_1\begin{bmatrix}
-2x_1\\
0
\end{bmatrix}-\lambda_2\begin{bmatrix}
0\\
-2x_2
\end{bmatrix},-\lambda_1 x_1^2,-\lambda_2 x_2^2,1-\lambda_0^2-\lambda_1^2-\lambda_2^2)\,.
\end{equation}
Let $(x,\bar\lambda)\in V(h_\text{FJ})$. 
We get $\lambda_1x_1=\lambda_2x_2=0$ which gives $\lambda_0=0$.
Since $\lambda_1x_1=0$, either $\lambda_1=0$ or $x_1=0$ holds.
If $\lambda_1=0$, we get $\lambda_2\in\{\pm 1\}$ which gives $x_2=0$.
If $x_1=0$, we consider $\lambda_2x_2=0$ which gives $\lambda_2=0$ or $x_2=0$.
If $x_1=0$ and $\lambda_2=0$, it implies that $\lambda_1\in\{\pm 1\}$.
If $x_1=0$ and $x_2=0$, it follows that $\lambda_1^2+\lambda_2^2=1$.
Thus we see that
\begin{equation}
V(h_\text{FJ})=\{(t,0,0,0,\pm 1),(0,t,0,\pm 1,0),(0,0,0,\cos{t},\sin{t})\,:\,t\in\R\}\,.
\end{equation}
Since $f(C(g))$ is infinite, the assumption of Theorem \ref{theo:rep} does not hold in this case.
Assume that there exists $q\in  P(g)[x,\bar \lambda]$ such that $f-q$ vanishes on $V(h_\text{FJ})$.
We write $q=\sigma_0+\sigma_1g_1+\sigma_2g_2+\sigma_3g_1g_2$ for some $\sigma_j\in\Sigma^2[x]$.
For all $t\in\R$, since $(t,0,0,0,1)\in V(h_\text{FJ})$, we have $f(t,0)=q(t,0,0,0,1)$. 
Setting $\psi_j=\sigma_j(t,0,0,0,1)\in\Sigma^2[t]$, we obtain $t=\psi_0-\psi_1t^2$. 
It implies that $\psi_0=t(1+t\psi_1)$ which leads to $\psi_0=t^2\xi$ for some $\xi\in\Sigma^2[t]$ since $\psi_0\in \Sigma^2[t]$.
We now get $t=t^2(\xi-\psi_1)$ which is impossible.
Hence there does not exist $q\in  P(g)[x,\bar \lambda]$ such that $f-q$ vanishes on $V(h_\text{FJ})$.
\end{example}

\section{Exact polynomial optimization}
\label{sec:application}
\subsection{Moment-SOS relaxations}
In this subsection we recall some preliminaries of the Moment-SOS relaxations originally developed by Lasserre in \cite{lasserre2001global}.
Given $d\in\N$, let $\N^n_d:=\{\alpha\in\N^n\,:\,\sum_{j=1}^n \alpha_j\le d\}$.
Given $d\in\N$, we denote by $v_d$ the vector of monomials in $x$ of degree at most $d$, i.e., $v_d=(x^\alpha)_{\alpha\in\N^n_d}$ with $x^\alpha:=x_1^{\alpha_1}\dots x_n^{\alpha_n}$.
For each $p\in\R[x]_d$, we write $p=c(p)^\top v_d=\sum_{\alpha\in\N^n_d}p_\alpha x^\alpha$, where $c(p)$ is denoted by the vector of coefficient of $p$, i.e., $c(p)=(p_\alpha)_{\alpha\in\N^n_d}$ with $p_\alpha\in\R$.
Given $A\in\R^{r\times r}$ being symmetric, we say that $A$ is positive semidefinite, denoted by $A\succeq 0$, if every eigenvalue of $A$ is non-negative.

The following lemma shows the connection between sums of squares and semidefinite programs (see, e.g., \cite{boyd2004convex}):
\begin{lemma}\label{lem:sdp.sos}
Let $\sigma\in\R[x]$ and $d\in\N$ such that $2d\ge\deg(\sigma)$.
Then $\sigma\in\Sigma^2[x]$ iff there exists $G\succeq 0$ such that $\sigma=v_d^\top Gv_d$.
\end{lemma}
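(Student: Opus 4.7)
The plan is to prove both directions separately, with the forward direction invoking a degree-bounding argument and the reverse direction amounting to a spectral decomposition. First, suppose $\sigma\in\Sigma^2[x]$, so that $\sigma=\sum_{i=1}^r p_i^2$ for some $p_i\in\R[x]$. The subtle point is to check that each $p_i$ may be chosen with $\deg(p_i)\le d$. I would argue this by letting $e=\max_i\deg(p_i)$ and decomposing $p_i=q_i+r_i$ with $q_i$ the homogeneous part of degree $e$ and $\deg(r_i)<e$. The degree-$2e$ component of $\sum_i p_i^2$ equals $\sum_i q_i^2$, so if $e>d\ge \deg(\sigma)/2$, then $\sum_i q_i^2=0$; since a sum of squares of real polynomials vanishes only when every summand vanishes, this forces $q_i=0$ for all $i$, contradicting $\deg(p_i)=e$. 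Hence $\deg(p_i)\le d$.

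With each $p_i$ of degree at most $d$, I would write $p_i=c(p_i)^\top v_d$ and expand:
\begin{equation*}
\sigma=\sum_{i=1}^r\bigl(c(p_i)^\top v_d\bigr)^2=v_d^\top\!\left(\sum_{i=1}^r c(p_i)c(p_i)^\top\right)\!v_d,
\end{equation*}
so that $G:=\sum_{i=1}^r c(p_i)c(p_i)^\top$ is symmetric and, being a sum of rank-one positive semidefinite matrices, satisfies $G\succeq 0$. This supplies the Gram matrix required by the statement.

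For the converse, assume $\sigma=v_d^\top G v_d$ with $G\succeq 0$. I would invoke the spectral theorem (or equivalently a Cholesky-type decomposition) to write $G=\sum_{i=1}^r u_i u_i^\top$ for vectors $u_i\in\R^{|\N^n_d|}$. Then
\begin{equation*}
\sigma=\sum_{i=1}^r v_d^\top u_i u_i^\top v_d=\sum_{i=1}^r\bigl(u_i^\top v_d\bigr)^2,
\end{equation*}
and since each $u_i^\top v_d$ is a polynomial in $\R[x]$, this exhibits $\sigma$ as an element of $\Sigma^2[x]$.

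The main obstacle is the degree-bounding step in the forward direction: without it, the Gram matrix $G$ may fail to be indexed by $\N^n_d$. The reverse direction is routine linear algebra once the spectral decomposition is in hand, and nowhere in the argument do we need any structure beyond what is already granted by $2d\ge\deg(\sigma)$ and $G\succeq 0$.
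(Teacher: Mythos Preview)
Your proof is correct and is the standard argument for this well-known characterization. The paper does not actually supply its own proof of this lemma; it simply cites \cite{boyd2004convex} as a reference, so there is nothing to compare against beyond noting that your argument is exactly the one found in such sources.
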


Given $y=(y_\alpha)_{\alpha\in\N^n}\subset \R$, let $L_y:\R[x]\to\R$ be the Riesz linear functional defined by $L_y(p)=\sum_{\alpha\in\N^n} p_\alpha y_\alpha$ for every $p\in\R[x]$.
Given $d\in\N$, $p\in\R[x]$ and $y=(y_\alpha)_{\alpha\in\N^n}\subset \R$, let $M_d(y)$ be the moment matrix of order $d$ defined by $(y_{\alpha+\beta})_{\alpha,\beta\in\N^n_d}$ and let $M_d(py)$ be the localizing matrix of order $d$ associated with $p$ defined by $(\sum_{\gamma\in\N^n}p_\gamma y_{\alpha+\beta+\gamma})_{\alpha,\beta\in\N^n_d}$.

Given $g_1,\dots,g_m\in\R[x]$, let $Q_d(g)[x]$ be the truncated quadratic module of order $d$ associated with $g=(g_1,\dots,g_m)$ defined by
\begin{equation}
Q_d(g)[x]=\{\sigma_0+\sum_{j=1}^m\sigma_jg_j\,:\,\sigma_j\in\Sigma^2[x]\,,\,\deg(\sigma_0)\le 2d\,,\,\deg(\sigma_jg_j)\le 2d\}\,.
\end{equation}
Given $h_1,\dots,h_l\in\R[x]$, let $I_d(h)$ be the truncated ideal of order $d$ associated with $h=(h_1,\dots,h_l)$ defined by
\begin{equation}
I_d(h)[x]=\{\sum_{j=1}^l\psi_jh_j\,:\,\psi_j\in\R[x]\,,\,\deg(\psi_jh_j)\le 2d\}\,.
\end{equation}

Given $k\in\N$ and $f,g_1,\dots,g_m,h_1,\dots,h_l\in\R[x]$, consider the following primal-dual semidefinite programs associated with $f$, $g=(g_1,\dots,g_m)$ and $h=(h_1,\dots,h_l)$:
\begin{equation}\label{eq:mom.relax}
\begin{array}{rl}
\tau_k(f,g,h):=\inf\limits_y& L_y(f)\\
\text{s.t} &M_k(y)\succeq 0\,,\,M_{k-d_j}(g_jy)\succeq 0\,,\,j=1,\dots,m\,,\\
&M_{k-r_t}(h_ty)=0\,,\,t=1,\dots,l\,,\,y_0=1\,,
\end{array}
\end{equation}
\begin{equation}\label{eq:sos.relax}
\begin{array}{rl}
\rho_k(f,g,h):=\sup\limits_{\xi,G_j,u_t} & \xi\\
\text{s.t} & G_j\succeq 0\,,\\
&f-\xi=v_k^\top G_0v_k+\sum_{j=1}^m g_jv_{k-d_j}^\top G_jv_{k-d_j}\\
&\qquad\qquad+\sum_{t=1}^l h_tu_t^\top v_{2r_t}\,,\\
\end{array}
\end{equation}
where $d_j=\lceil \deg(g_j)/2\rceil$ and $r_t=\lceil \deg(h_t)/2\rceil$.
Using Lemma \ref{lem:sdp.sos}, we obtain 
\begin{equation}\label{eq:equi.sos}
\rho_k(f,g,h):=\sup_{\xi\in\R}\{ \xi\,:\,f-\xi\in Q_k(g)[x]+I_k(h)[x]\}\,.
\end{equation}
Primal-dual semidefinite programs \eqref{eq:mom.relax}-\eqref{eq:sos.relax} are known as the Moment-SOS relaxations of order $k$ for problem
\begin{equation}\label{eq:pop.equality}
\bar f^\star:=\inf\limits_{x\in S(g)\cap V(h)} f(x)\,.
\end{equation}
We state in the following lemma some recent results involving the Moment-SOS relaxations:
\begin{lemma}\label{lem:mom.sos}
Let $f,g_1,\dots,g_m,h_1,\dots,h_l\in\R[x]$. 
Let $\bar f^\star$ be as in \eqref{eq:pop.equality} with $g=(g_1,\dots,g_m)$ and $h=(h_1,\dots,h_l)$. 
Then the following statements hold:
\begin{enumerate}
\item For every $k\in\N$, $\tau_k(f,g,h)\le \tau_{k+1}(f,g,h)$ and $\rho_k(f,g,h)\le \rho_{k+1}(f,g,h)$.
\item For every $k\in\N$, $\rho_k(f,g,h)\le \tau_{k}(f,g,h)\le \bar f^\star$.
\item If $S(g)\cap V(h)$ has a non-empty interior, for $k\in\N$ sufficiently large, the Slater condition holds for the Moment relaxation \eqref{eq:mom.relax} of order $k$.
\item If $S(g)\cap V(h)$ satisfies the Archimedean condition, $\rho_k(f,g,h)\to \bar f^\star$ as $k\to \infty$.
\item If there exists $R>0$ such that $g_m+h_l=R-x_1^2-\dots-x_n^2$, for $k\in\N$ sufficiently large, the Slater condition holds for the SOS relaxation \eqref{eq:sos.relax} of order $k$.
\item If there exists $q\in Q(g)[x]$ such that $f-\bar f^\star-q$ vanishes on $V(h)$, then there exists $k\in\N$ such that $\rho_k(f,g,h)=\bar f^\star$.
\end{enumerate}
\end{lemma}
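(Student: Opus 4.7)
The plan is to prove the six items in two batches: items (1), (2), and (6) are essentially bookkeeping built on truncation, weak SDP duality, and Dirac-measure testing, while items (3), (4), (5) rely on two analytic/algebraic ingredients—a strictly feasible moment measure on the interior, and an ideal-enhanced Putinar Positivstellensatz obtained by reducing to Lemma \ref{lem:Pu}.

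First I would dispatch the easy items. For (1), truncating a moment sequence of order $k+1$ to order $k$ preserves every PSD and equality constraint (they are nested), and padding the Gram matrices of a degree-$k$ SOS certificate with zero blocks yields a certificate of degree $k+1$. For (2), weak SDP duality between \eqref{eq:mom.relax} and \eqref{eq:sos.relax} gives $\rho_k(f,g,h)\le\tau_k(f,g,h)$; and for any feasible $u\in S(g)\cap V(h)$ the pseudo-moment sequence $y_\alpha:=u^\alpha$ satisfies $M_k(y)=v_k(u)v_k(u)^\top\succeq 0$, $M_{k-d_j}(g_jy)=g_j(u)v_{k-d_j}(u)v_{k-d_j}(u)^\top\succeq 0$, and $M_{k-r_t}(h_ty)=0$, hence $\tau_k(f,g,h)\le L_y(f)=f(u)$, and minimizing over $u$ gives $\tau_k\le\bar f^\star$. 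For (6), if $f-\bar f^\star-q\in I(h)[x]$ with $q\in Q(g)[x]$, I pick $k$ so large that every summand in both decompositions has degree at most $2k$; the explicit identity then certifies $\rho_k(f,g,h)\ge\bar f^\star$, and combined with (2) this is equality. For (3), I would fix a smooth probability measure $\mu$ compactly supported in the non-empty interior of $S(g)\cap V(h)$ and set $y_\alpha:=\int x^\alpha\,d\mu$: because $\supp(\mu)$ has non-empty interior, the linear functional $p\mapsto\int p^2\,d\mu$ is strictly positive on nonzero polynomials, so $M_k(y)\succ 0$; on the compact $\supp(\mu)$ each $g_j$ is bounded below by a positive constant, yielding $M_{k-d_j}(g_jy)\succ 0$; and $h_t$ vanishes on $V(h)\supset\supp(\mu)$, giving $M_{k-r_t}(h_ty)=0$.

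For (4) I would invoke the ideal-with-quadratic-module form of Putinar's theorem: if the quadratic module $Q(g,h,-h)[x]$ is Archimedean then every polynomial strictly positive on $S(g)\cap V(h)=S(g,h,-h)$ lies in $Q(g)[x]+I(h)[x]$; this is the content of Lemma \ref{lem:Pu} applied to $(g_1,\dots,g_m,h_1,-h_1,\dots,h_l,-h_l)$, since the $\pm h_t$ pairs can be absorbed into $I(h)[x]$. For $\epsilon>0$ the polynomial $f-\bar f^\star+\epsilon$ is strictly positive on $S(g)\cap V(h)$, hence admits such a representation; truncating at sufficiently large $k$ shows $\rho_k(f,g,h)\ge\bar f^\star-\epsilon$, and letting $\epsilon\downarrow 0$ together with (2) gives $\rho_k\to\bar f^\star$. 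For (5), the presence of the explicit ball constraint $g_m+h_l=R-x_1^2-\dots-x_n^2$ makes the quadratic module automatically Archimedean, and I would take any $\xi<\bar f^\star$, apply the same Putinar-with-ideal representation to $f-\xi$, and then perturb every Gram block by $\epsilon\,I$, absorbing the excess into the Gram block of $\sigma_0$ via the ball identity; the result is a strictly PSD feasible point of \eqref{eq:sos.relax}.

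The main obstacle is the ideal-enhanced Putinar representation underlying (4) and (5): one must interpret the phrase ``$S(g)\cap V(h)$ satisfies the Archimedean condition'' as Archimedeanness of $Q(g,h,-h)[x]$, and verify that a decomposition in this enlarged quadratic module can be rewritten as an element of $Q(g)[x]+I(h)[x]$. The rewriting is routine because any term of the form $\sigma\,h_t$ or $\sigma\,(-h_t)$ with $\sigma\in\Sigma^2[x]$ is a fortiori a member of $I(h)[x]$. Everything else is either a truncation-degree count or a direct application of Lemma \ref{lem:Pu}.
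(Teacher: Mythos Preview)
Your treatment of items (1)--(5) is adequate and in line with the paper, which for those items simply cites Lasserre \cite{lasserre2001global} and \cite{mai2020exploiting}. The genuine gap is in your proof of item (6).

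The hypothesis of (6) is that $u:=f-\bar f^\star-q$ \emph{vanishes on} $V(h)$, not that $u\in I(h)[x]$. These two conditions coincide only when $I(h)[x]$ is real radical, which is not assumed here (indeed the Remark immediately following the lemma in the paper treats the real radical case as an additional assumption). In general, vanishing on $V(h)$ only places $u$ in $\sqrt[\R]{I(h)[x]}$: by Krivine--Stengle there exist an integer $r\ge 1$ and $\sigma\in\Sigma^2[x]$ with $u^{2r}+\sigma\in I(h)[x]$, but no direct identity $f-\bar f^\star=q+(\text{element of }I(h)[x])$ is available to truncate.

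The paper closes this gap with Nie's perturbation trick. Using the univariate fact that $1+t+\tfrac{1}{2r}t^{2r}\in\Sigma^2[t]$, one writes, for every $\varepsilon>0$,
\[
f-\bar f^\star+\varepsilon
= q
+\varepsilon\Bigl(1+\tfrac{u}{\varepsilon}+\tfrac{1}{2r}\bigl(\tfrac{u}{\varepsilon}\bigr)^{2r}\Bigr)
-\tfrac{\varepsilon^{1-2r}}{2r}\bigl(u^{2r}+\sigma\bigr)
+\tfrac{\varepsilon^{1-2r}}{2r}\,\sigma
\;\in\; Q(g)[x]+I(h)[x],
\]
and the crucial point is that the degrees of all summands on the right are bounded \emph{independently of} $\varepsilon$. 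Hence a single $k$ works for every $\varepsilon>0$, giving $\rho_k(f,g,h)\ge\bar f^\star-\varepsilon$ for all $\varepsilon$, and therefore $\rho_k(f,g,h)=\bar f^\star$ by item (2). Your one-line argument proves (6) only under the extra assumption that $I(h)[x]$ is real radical; in the general setting you must insert this $\varepsilon$-uniform construction.
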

\begin{proof}
The first four statements are proved by Lasserre in \cite{lasserre2001global}.
The proof of the fifth statement can be found in \cite{mai2020exploiting}.
The final statement is based on Nie's technique in \cite{nie2014optimality}, which is sketched as follows: 
Let $u=f-\bar f^\star-q$.
By assumption,we get  $u=0$ on $V(h)$. 
From this, Krivine--Stengle's Nichtnegativstellens\"atze \cite{krivine1964anneaux} yields that there exists a positive integer $r$ and $\sigma \in \Sigma^2[x]$ such that $u^{2r} + \sigma \in I(h)[x]$.
Let $c=\frac{1}{2r}$. 
Then it holds that $1+t+ct^{2r}\in\Sigma^2[t]$.
Thus for all $\varepsilon>0$, we have
\begin{equation}
f-\bar f^\star+\varepsilon=q + \varepsilon(1+\frac{u}{\varepsilon}+c\left(\frac{u}{\varepsilon}\right)^{2r})-c\varepsilon^{1-2r}(u^{2r} + \sigma ) +c\varepsilon^{1-2r}\sigma\in Q(g)[x]+I(h)[x]\,.
\end{equation}
Moreover, the degree of the right-hand side has an upper bound independent from $\varepsilon$.
This implies that there exists $k\in\N$ such that for all $\varepsilon>0$, $f-\bar f^\star+\varepsilon \in Q_k(g)[x]+I_k(h)[x]$.
Then we for all $\varepsilon>0$, $\bar f^\star-\varepsilon$ is a feasible solution of \eqref{eq:equi.sos} of the value $\rho_k(f,g,h)$.
It gives $\rho_k(f,g,h)\ge \bar f^\star-\varepsilon$, for all $\varepsilon>0$, and, in consequence, we get $\rho_k(f,g,h)\ge \bar f^\star$.
Using the second statement, we obtain that $\rho_k(f,g,h)= \bar f^\star$, yielding the final statement.
\end{proof}
\begin{remark}
In the final statement of Lemma \ref{lem:mom.sos}, if we assume further that $I(h)$ is real radical, and $\rho_k(f,g,h)$ is attained.
However, there is a case where $\rho_k(f,g,h)$ is not attained, although $\rho_k(f,g,h)= \bar f^\star$.
Example \ref{exam:1} is an instance for this (see at the end of \cite[Section 3]{nie2014optimality}).
Fortunately, if there exists $R>0$ such that $g_m+h_l=R-x_1^2-\dots-x_n^2$, for $k\in\N$ sufficiently large, the fifth statement of Theorem \ref{lem:mom.sos} shows that the Slater condition holds for the SOS relaxation \eqref{eq:sos.relax} of order $k$, and hence the Moment relaxation \eqref{eq:mom.relax} has at least one global minimizer.
\end{remark}
As a consequence of Lemma \ref{lem:FJ}, the following lemma is obtained:
\begin{lemma}\label{lem:equi.prob}
Let $f,g_1,\dots,g_m\in\R[x]$. 
Let $f^\star$ be as in problem \eqref{eq:pop} with $g=(g_1,\dots,g_m)$.
Let $h_\text{FJ}$ be as in \eqref{eq:.polyFJ}.
If problem \eqref{eq:pop} has a global minimizer, then it holds that
\begin{equation}\label{eq:eqi.POP}
\begin{array}{rl}
f^\star=\min\limits_{x,\bar\lambda}& f(x)\\
\text{s.t.}& x\in S(g)\,,\,(x,\bar\lambda)\in V(h_\text{FJ})\,.
\end{array}
\end{equation}
\end{lemma}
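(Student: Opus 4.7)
The plan is to establish the claimed equality by a standard two-sided inequality argument, drawing the nontrivial direction straight from Lemma \ref{lem:FJ}. Write $\mu$ for the minimum value on the right-hand side of \eqref{eq:eqi.POP}, so that the task is to show both $\mu \le f^\star$ and $\mu \ge f^\star$, and to confirm that the minimum is actually attained (so that writing $\min$ instead of $\inf$ is justified).

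For the inequality $\mu \le f^\star$, I would invoke the hypothesis that problem \eqref{eq:pop} admits a global minimizer $u \in S(g)$. Since any global minimizer is a local minimizer, Lemma \ref{lem:FJ} produces a tuple $\bar\lambda = (\lambda_0,\dots,\lambda_m) \in \mathbb{R}^{m+1}$ with $\sum_{j=0}^m \lambda_j^2 = 1$, $\lambda_0^2 \nabla f(u) = \sum_{j=1}^m \lambda_j^2 \nabla g_j(u)$, and $\lambda_j^2 g_j(u) = 0$ for $j = 1,\dots,m$. Reading off the definition \eqref{eq:.polyFJ} of $h_\text{FJ}$, this is precisely the assertion that $(u,\bar\lambda) \in V(h_\text{FJ})$. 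Combined with $u \in S(g)$, the pair $(u,\bar\lambda)$ is feasible for the right-hand side of \eqref{eq:eqi.POP} and has objective value $f(u) = f^\star$, hence $\mu \le f^\star$ and simultaneously the right-hand side minimum is attained.

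For the reverse inequality $\mu \ge f^\star$, any feasible pair $(x,\bar\lambda)$ for the right-hand side satisfies $x \in S(g)$ by construction, so $f(x) \ge f^\star$ by definition of $f^\star$ as the infimum of $f$ over $S(g)$. Taking the minimum over feasible pairs yields $\mu \ge f^\star$, which together with the previous direction gives the desired equality.

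There is no real obstacle here: the content lies entirely in Lemma \ref{lem:FJ}, which packages the first-order necessary conditions into exactly the polynomial system defining $V(h_\text{FJ})$, and the two inequalities are then immediate. The only thing to be mildly careful about is matching the two equivalent forms in Lemma \ref{lem:FJ} with $h_\text{FJ}$ as defined in \eqref{eq:.polyFJ}; since \eqref{eq:.polyFJ} uses $\lambda_j$ rather than $\lambda_j^2$ as coefficients (matching the left-hand form of Lemma \ref{lem:FJ} up to a sign choice for the $\lambda_j$), I would pick the signs of the multipliers consistently with that convention before concluding $(u,\bar\lambda) \in V(h_\text{FJ})$.
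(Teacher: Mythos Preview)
Your proposal is correct and matches the paper's approach: the paper simply states that the lemma is a consequence of Lemma~\ref{lem:FJ} without spelling out the two-inequality argument, which is exactly what you supply. The only cosmetic point is that, as you yourself note at the end, you should invoke the left-hand (unsquared, non-negative multiplier) form of Lemma~\ref{lem:FJ} directly so that the resulting $\bar\lambda$ lands in $V(h_\text{FJ})$ as defined in \eqref{eq:.polyFJ} without any further adjustment.
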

We present  in the following theorem the main application of Theorem \ref{theo:rep} to polynomial optimization:
\begin{theorem}\label{theo:pop}
Let $f,g_1,\dots,g_m\in\R[x]$. 
Let $f^\star$ be as in problem \eqref{eq:pop} with $g=(g_1,\dots,g_m)$.
Let $h_\text{FJ}$ be as in \eqref{eq:.polyFJ}.
Assume that problem \eqref{eq:pop} has at least one global minimizer, and $f(C(g))$ is finite.
Then there exists $k\in\N$ such that $\rho_k(f,\Pi g,h_\text{FJ})=f^\star$, where $\Pi g$ is defined as in \eqref{eq:prod.g}.
Moreover, if $S(g)$ satisfies the Archimedean condition, there exists $k\in\N$ such that $\rho_k(f,g,h_\text{FJ})=f^\star$.
Furthermore, if these exists $R>0$ such that $g_m=R-x_1^2-\dots-x_n^2$, for $k\in\N$ sufficiently large, the Slater condition holds for the SOS relaxation \eqref{eq:sos.relax} of order $k$ with $h=h_\text{FJ}$.
\end{theorem}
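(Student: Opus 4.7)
The plan is to combine the Nichtnegativstellensatz of Theorem \ref{theo:rep} with the generic convergence machinery of Lemma \ref{lem:mom.sos}, treating the lifted problem in the variables $(x,\bar\lambda)$. By Lemma \ref{lem:equi.prob}, $f^\star$ coincides with the minimum of $f$ over $S(g)\cap V(h_\text{FJ})$ in the lifted space. Since $f^\star$ is attained, $f-f^\star$ is non-negative on $S(g)$, and by hypothesis $(f-f^\star)(C(g))$ is finite. Theorem \ref{theo:rep} therefore produces $q\in P(g)[x,\bar\lambda]=Q(\Pi g)[x,\bar\lambda]$ with $(f-f^\star)-q$ vanishing on $V(h_\text{FJ})$, and under the Archimedean assumption one can even take $q\in Q(g)[x,\bar\lambda]$.

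For the first two statements I would invoke the sixth statement of Lemma \ref{lem:mom.sos} with $(g,h)$ replaced by $(\Pi g,h_\text{FJ})$ and by $(g,h_\text{FJ})$, respectively. That statement's proof only uses the existence of a representation of the form $f-f^\star-q$ vanishing on the variety, so no extra argument is required; it yields a single $k\in\N$ at which $\rho_k(f,\Pi g,h_\text{FJ})\ge f^\star-\varepsilon$ (resp.\ $\rho_k(f,g,h_\text{FJ})\ge f^\star-\varepsilon$) for every $\varepsilon>0$. The matching upper bound comes from the second statement of Lemma \ref{lem:mom.sos}: $\rho_k\le\tau_k\le\inf_{S(\Pi g)\cap V(h_\text{FJ})}f\le f^\star$, where the last inequality uses $S(g)\subseteq S(\Pi g)$. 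Sandwiching forces $\rho_k=f^\star$ in both cases.

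For the Slater statement, the key observation is that the polynomial $b:=1-\sum_{j=0}^m\lambda_j^2$, which is the final component of $h_\text{FJ}$, combines with the Archimedean generator $g_m=R-\sum_i x_i^2$ via
\begin{equation}
b+g_m=(R+1)-\sum_{i=1}^n x_i^2-\sum_{j=0}^m\lambda_j^2\,,
\end{equation}
which has exactly the shape required by the fifth statement of Lemma \ref{lem:mom.sos} in the enlarged variable set $(x,\bar\lambda)$. That statement then delivers the Slater condition for the SOS relaxation \eqref{eq:sos.relax} of order $k$ with $h=h_\text{FJ}$ for all sufficiently large $k$.

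The main subtlety I anticipate is in the first step: one must be comfortable with the fact that replacing $g$ by $\Pi g$ enlarges the feasible set in a potentially unbounded way, but cannot actually strictly lower the value of the relaxation, since the representation supplied by Theorem \ref{theo:rep} for $f-f^\star$ lives inside $Q(\Pi g)[x,\bar\lambda]+I(h_\text{FJ})[x,\bar\lambda]$ and hence certifies $f\ge f^\star$ on $S(\Pi g)\cap V(h_\text{FJ})$ as a by-product. Once this is recognized, the rest of the proof is essentially bookkeeping on top of Lemmas \ref{lem:equi.prob} and \ref{lem:mom.sos}.
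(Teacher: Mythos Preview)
Your proposal is correct and follows essentially the same route as the paper: reformulate $f^\star$ via Lemma \ref{lem:equi.prob}, apply Theorem \ref{theo:rep} to $f-f^\star$ to obtain $q\in P(g)[x,\bar\lambda]=Q(\Pi g)[x,\bar\lambda]$ (respectively $q\in Q(g)[x,\bar\lambda]$ in the Archimedean case), invoke the sixth statement of Lemma \ref{lem:mom.sos}, and handle the Slater claim by observing $g_m+b=(R+1)-\sum_i x_i^2-\sum_j\lambda_j^2$ and applying the fifth statement. One small correction to your closing paragraph: in fact $S(\Pi g)=S(g)$ (each $g_j$ appears among the entries of $\Pi g$, and conversely products of non-negatives are non-negative), so the ``enlargement'' you worry about does not occur and no extra argument is needed there.
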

\begin{proof}
We first prove $\rho_k(f,\Pi g,h_\text{FJ})=f^\star$ for some $k\in\N$.
Since $S(g)=S(\Pi g)$, Lemma \ref{lem:equi.prob} implies that
\begin{equation}
\begin{array}{rl}
f^\star=\min\limits_{x,\bar\lambda}& f(x)\\
\text{s.t.}& x\in S(\Pi g)\,,\,(x,\bar\lambda)\in V(h_\text{FJ})\,,
\end{array}
\end{equation}
By assumption, Theorem \ref{theo:rep} yields that there exists $q\in P(g)[x,\bar \lambda]=Q(\Pi g)[x,\bar \lambda]$ such that $f-f^\star-q$ vanishes on $V(h_\text{FJ})$.
Applying the final statement of Lemma \ref{lem:mom.sos} (by replacing $g$ with $\Pi g$), we obtain the first statement.
Assume that $S(g)$ satisfies the Archimedean condition.
The proof of the equality $\rho_k(f,g,h_\text{FJ})=f^\star$ for some $k\in\N$ is similar.
Now assume that there exists $R>0$ such that $g_m=R-x_1^2-\dots-x_n^2$.
Let us prove the final statement.
By definition of $h_{FJ}$, the final entry of  $h_{FJ}$ is $b=1-\lambda_0^2-\dots-\lambda_m^2$ which implies that
\begin{equation}
g_m+b=(R+1)-x_1^2-\dots-x_n^2- \lambda_0^2-\dots- \lambda_{m}^2\,.
\end{equation}
From this, the fifth statement of Lemma \ref{lem:mom.sos} shows that for $k\in\N$ sufficiently large, the Slater condition holds for the SOS relaxation \eqref{eq:sos.relax} of order $k$ with $h=h_\text{FJ}$, yielding the final statement.
\end{proof}
Combining Theorem \ref{theo:pop}, Examples \ref{exam:ball}, \ref{exam:box}, and \ref{exam:simplex}, we obtain the following corollary:
\begin{corollary}\label{coro:ball.box.simplex}
Let $f,g_1,\dots,g_m\in\R[x]$.
Let $f^\star$ be as in problem \eqref{eq:pop} with $g=(g_1,\dots,g_m)$.
Let $h_\text{FJ}$ be as in \eqref{eq:.polyFJ}.
Assume that problem \eqref{eq:pop} has at least one global minimizer, and one of the following conditions holds:
\begin{enumerate}
    \item $g=(1-x_1^2-\dots-x_n^2)$;
    \item $g=(1-x_n^2,\dots,1-x_n^2)$;
    \item $g=(x_1,\dots,x_n,1-x_1-\dots-x_n)$.
\end{enumerate}
Then there exists $k\in\N$ such that $\rho_k(f,g,h_\text{FJ})=f^\star$.
\end{corollary}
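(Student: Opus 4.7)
The plan is to derive the corollary as a direct consequence of Theorem \ref{theo:pop}, used in its Archimedean form, by checking the two non-trivial hypotheses of that theorem case by case. Both checks are already carried out inside the three cited examples, so the work of the proof is largely one of bookkeeping: assembling what is already proved and feeding it into the theorem.

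First I would record that in each of the three cases the set of critical points $C(g)$ is empty. This is established in Examples \ref{exam:ball}, \ref{exam:box} and \ref{exam:simplex} by unwinding the rank-deficiency condition $\rank(\varphi^g(x)) < m$ through the equations defining it: for the ball, the system forces $x = 0$ together with $1 - \|x\|^2 = 0$, an immediate contradiction; for the hypercube, one combines the gradient equations $-2\lambda_j x_j = 0$ with the complementary slackness $\lambda_j(1 - x_j^2) = 0$ to deduce $\lambda_j = \lambda_j(1-x_j^2) - \tfrac{1}{2} x_j(-2\lambda_j x_j) = 0$ for every $j$, contradicting $\lambda \neq 0$; for the simplex, the gradient equation forces $\lambda_1 = \dots = \lambda_n = \lambda_{n+1}$, and then complementary slackness together with $\sum_j x_j = 1$ drives every multiplier to zero. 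In particular $f(C(g)) = \emptyset$ is vacuously finite in each case.

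Next I would verify the Archimedean condition on $S(g)$. For the unit ball this is immediate from $g_1 = 1 - x_1^2 - \dots - x_n^2 \in Q(g)[x]$. For the unit hypercube it follows from the identity $n - x_1^2 - \dots - x_n^2 = \sum_{j=1}^n g_j \in Q(g)[x]$. For the unit simplex the Archimedean property is classical and has already been recalled in Example \ref{exam:simplex} via \cite{jacobi2001distinguished}.

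With finiteness of $f(C(g))$ and the Archimedean condition secured, and the existence of a global minimizer assumed outright, all hypotheses of Theorem \ref{theo:pop} are met, and its Archimedean branch produces some $k \in \N$ with $\rho_k(f, g, h_\text{FJ}) = f^\star$. The only subtlety worth flagging is that one must invoke the Archimedean branch (yielding a quadratic-module representation on $g$ itself) and not the preordering branch (which would yield a representation on $\Pi g$), so that the resulting hierarchy is genuinely indexed on the original constraints $g_1, \dots, g_m$ as claimed. Since every ingredient beyond this is supplied verbatim by the three named examples, no real obstacle remains.
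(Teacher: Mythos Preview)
Your proposal is correct and follows essentially the same approach as the paper, which simply states that the corollary follows by combining Theorem \ref{theo:pop} with Examples \ref{exam:ball}, \ref{exam:box}, and \ref{exam:simplex}. You have merely unpacked those references, verifying $C(g)=\emptyset$ and the Archimedean condition in each case before invoking the Archimedean branch of Theorem \ref{theo:pop}; nothing in your argument differs substantively from the paper's.
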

The following two examples are given in \cite[Example 3.3]{nie2014optimality}:
\begin{example}
Let $\varepsilon>0$.
Consider the problem \eqref{eq:pop} with $n=3$, $m=1$, $f=x_1^4x_2^2+x_1^2x_2^4+x_3^6-3x_1^2x_2^2x_3^2+\varepsilon(x_1^2+x_2^2+x_3^2)$ and $g=(1-x_1^2-x_2^2-x_3^2)$. 
For $\varepsilon>0$ sufficiently small, Lasserre's hierarchy for this problem does not have finite convergence, as shown by Marshall
\cite[Example 2.4]{marshall2006representations}.
However, Corollary \ref{coro:ball.box.simplex} yields that there exists $k\in\N$ such that $\rho_k(f,g,h_\text{FJ})=f^\star$.
\end{example}
\begin{example}
Consider the problem \eqref{eq:pop} with $n=2$, $m=3$, $f=x_1x_2+x_1^3+x_2^3$ and $g=(x_1,x_2,1-x_1-x_2)$. 
As shown by Scheiderer \cite[Remark 3.9]{scheiderer2005distinguished}, Lasserre's hierarchy for this problem does not have finite convergence.
However, Corollary \ref{coro:ball.box.simplex} yields that there exists $k\in\N$ such that $\rho_k(f,g,h_\text{FJ})=f^\star$.
\end{example}

The following two examples are given in \cite[Example 5.6]{nie2013exact}:
\begin{example}
Consider the problem \eqref{eq:pop} with $n=m=3$, $f=x_1^4x_2^2+x_2^4x_3^2+x_3^4x_1^2-3x_1^2x_2^2x_3^2$ and $g=(1-x_1^2,1-x_2^2,1-x_3^2)$. 
As shown by Nie \cite[Example 5.6]{nie2013exact}, Lasserre's hierarchy for this problem does not have finite convergence.
However, Corollary \ref{coro:ball.box.simplex} yields that there exists $k\in\N$ such that $\rho_k(f,g,h_\text{FJ})=f^\star$.
\end{example}

\subsection{Numerical experiments}
\label{sec:num}
In this subsection, we report numerical results produced by the SOS relaxations of the values $\rho_k(f,g,0)$ and $\rho_k(f,g,h_\text{FJ})$ for problem \eqref{eq:pop}, where $h_\text{FJ}$ is defined as in \eqref{eq:.polyFJ}. 
The former is the standard semidefinite program in \cite{lasserre2001global} while the latter is the semidefinite program modeled by our method in this paper.
We indicate the data of each semidefinite program, namely ``value", ``time", and ``size" correspond to the numerical value of the optimal value of the semidefinite program, the running time in seconds to obtain this numerical value, and the size of the semidefinite program, respectively.
Here the size of the semidefinite program includes the largest matrix size, the number of affine constraints, the number of scalar variables, and the number of matrix variables.

The experiments are performed in Julia 1.3.1 with softwares TSSOS \cite{wang2021tssos} and Mosek 9.1 \cite{mosek2010mosek}.
We use a desktop computer with an Intel(R) Core(TM) i7-8665U CPU @ 1.9GHz $\times$ 8 and 31.2 GB of RAM. 

Our test problem is taken from Example \ref{exam:2}, namely $n=2$, $m=1$, $f=(x_1+1)^2+x_2^2-1$ and $g=(g_1)=(x_1^3-x_2^2)$.
It is clear that $f^\star=0$ which is attained at the unique global minimizer $(0,0)$ for problem \eqref{eq:pop}.
Moreover, the Karush--Kuhn--Tucker conditions do not hold at this minimizer.
By using \cite[Proposition 3.4]{nie2014optimality}, we get $\rho_k(f,g,0)< f^\star$ for all $k\in\N$.
As shown in Example \ref{exam:2}, $f(C(g))$ is singleton.
From this, Theorem \ref{theo:pop} yields that  $\rho_d(f,g,h_\text{FJ})=f^\star$ for some $d\in\N$.
Thus we obtain $\rho_k(f,g,0)<f^\star=\rho_d(f,g,h_\text{FJ})$, for all $k\in\N$.

We display the numerical results in Table \ref{tab:bench}.
\begin{table}
\footnotesize
\caption{\footnotesize Lower bounds on $f^\star=0$}
\label{tab:bench}
\begin{center}
\begin{tabular}{ cccc }
 \hline
\multicolumn{4}{c}{$\rho_k(f,g,h_\text{FJ})$} \\
\hline
$k$& value & time & size\\
$2$ &$-0.99984$ & 0.03& (15,70,34,1)\\
$3$ &$-0.48463$ & 0.08& (35,210,271,2)\\
$4$&$-0.00920$ & 0.33& (70,495,1501,2)\\
$5$ &$-0.00860$ & 1.63& (126,1001,6231,2)\\
$6$ &$-0.00831$ & 7.64& (210,1820,20973,2)\\
\hline
\multicolumn{4}{c}{$\rho_k(f,g,0)$}\\
\hline
$k$&value & time & size \\
$16$&$-0.04760$& 2.34 & (153,561,1,2)\\
$17$&$-0.04672$& 3.30 & (171,630,1,2)\\
$18$&$-0.04528$& 5.26 & (190,703,1,2)\\
$19$&$-0.04476$& 6.73 & (210,780,1,2)\\
$20$&$-0.04337$& 11.4 & (231,861,1,2)\\
 \hline
\multicolumn{4}{c}{$\rho_k(f,(g,b),0)$} \\
\hline
$k$&value & time & size \\
$16$&$-0.02370$& 5.31 & (153,561,1,3)\\
$17$&$-0.02306$& 6.97 & (171,630,1,3)\\
$18$&$-0.02284$& 7.91 & (190,703,1,3) \\
$19$&$-0.02198$& 11.9 & (210,780,1,3) \\
$20$&$-0.02148$& 24.1 & (231,861,1,3)\\
\end{tabular}
\end{center}
\end{table}
It shows that the numerical value $-0.00831$ of $\rho_6(f,g,h_\text{FJ})$ is the best lower bound on $f^\star$. 
It takes around $7$ seconds to obtain this numerical value. 
It is worth pointing out that the standard SOS relaxations of the values $\rho_k(f,g,0)$ and  $\rho_k(f,(g,b),0)$ cannot reach the bound $-0.00831$ in less than $10$ seconds despite using the additional ball constraint $b=1-x_1^2-x_2^2$.
Another observation is that the size of the SOS relaxations of the value $\rho_k(f,g,h_\text{FJ})$ grows faster than the ones of the values $\rho_k(f,g,0)$ and  $\rho_k(f,(g,b),0)$ when $k$ increases.
\section{Variations}
\label{sec:variation}
\subsection{Representations}
\label{sec:rep.variation}
We state in the following theorem the first variation of Theorem \ref{theo:rep}, where we assume that the image of the intersection of a semi-algebraic set with the set of its singularities is finite:
\begin{theorem}\label{theo:rep.variation}
Let $f,g_1,\dots,g_m\in\R[x]$. 
Assume that $f$ is non-negative on $S(g)$  with $g:=(g_1,\dots,g_m)$ and $f(C(g)\cap S(g))$ is finite. 
Then there exists $q\in P(g)[x,\bar \lambda]$ such that $f-q$ vanishes on $(S(g)\times\R^{m+1})\cap V(h_\text{FJ})$, where $\bar\lambda:=(\lambda_0,\dots,\lambda_m)$ and $h_\text{FJ}$ is defined as in \eqref{eq:.polyFJ}.
Moreover, if $S(g)$ satisfies the Archimedean condition, we can take $q\in Q(g)[x,\bar \lambda]$.
\end{theorem}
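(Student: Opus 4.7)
The plan is to adapt the proof of Theorem \ref{theo:rep} by working with the semi-algebraic set $T := (S(g) \times \R^{m+1}) \cap V(h_\text{FJ})$ in place of $V(h_\text{FJ})$ itself. First, by Lemma \ref{eq:component.semi}, $T$ has finitely many semi-algebraically path-connected components $Z_1, \ldots, Z_s$. Next, I would prove that $f$ is constant on each $Z_i$ by a direct modification of Lemma \ref{lem:constant}: any continuous piecewise-differentiable path $\phi(\tau) = (x(\tau), \bar\lambda(\tau))$ inside $Z_i$ never leaves $S(g) \times \R^{m+1}$, so on the closed set where $\lambda_0(\tau) = 0$ the identification in Lemma \ref{lem:equa} gives $x(\tau) \in C(g) \cap S(g)$, while on the complementary open intervals the Lagrangian-plus-mean-value argument still yields constancy of $\tau \mapsto f(x(\tau))$. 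The hypothesis that $f(C(g) \cap S(g))$ is finite, combined with the continuity of $f \circ x$, forces $f$ to be constant on the $\{\lambda_0 = 0\}$-pieces of the path and hence on all of $Z_i$.

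From this I conclude $f(T) = \{t_1, \ldots, t_r\} \subseteq [0, \infty)$, non-negativity being inherited from $T \subseteq S(g) \times \R^{m+1}$ and the hypothesis $f \ge 0$ on $S(g)$. Now define complex varieties $U_i := V_\C(h_\text{FJ}) \cap V_\C(f - t_i)$ for $i = 1, \ldots, r$; these are pairwise disjoint (since the $t_i$'s are distinct), each is defined by finitely many polynomials in $\R[x, \bar\lambda]$, and $f \equiv t_i$ on $U_i$. Moreover every $U_i$ meets $S(g) \times \R^{m+1}$ because $\emptyset \ne T \cap f^{-1}(t_i) \subseteq U_i \cap (S(g) \times \R^{m+1})$. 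Applying Lemma \ref{lem:const.func} to these $U_i$'s (with $g$ regarded as polynomials in $(x, \bar\lambda)$) produces $q \in P(g)[x, \bar\lambda]$ such that $f - q$ vanishes on $U \cap \R^{n+m+1}$, where $U = U_1 \cup \dots \cup U_r$. The inclusion $T \subseteq U \cap \R^{n+m+1}$, which holds since $T \subseteq V(h_\text{FJ})$ and $f(T) \subseteq \{t_1, \ldots, t_r\}$, delivers the desired conclusion.

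For the Archimedean refinement I would enlarge $g$ by $b := 1 - \sum_{j=0}^m \lambda_j^2$. The identity \eqref{eq:large.ball} shows that $S(g, b)$ is Archimedean in $\R^{n+m+1}$, and $T \subseteq S(g, b)$ because $b$ vanishes on $V(h_\text{FJ})$. Applying the Archimedean conclusion of Lemma \ref{lem:const.func} to the same varieties $U_i$ yields some $p \in Q(g, b)[x, \bar\lambda]$ with $f - p$ vanishing on $U \cap \R^{n+m+1}$; writing $p = \sigma_0 + \sum_{j=1}^m \sigma_j g_j + b \sigma_{m+1}$ and discarding the $b$-term, which is zero on $V(h_\text{FJ}) \supseteq T$, leaves $q := \sigma_0 + \sum_{j=1}^m \sigma_j g_j \in Q(g)[x, \bar\lambda]$ with $f - q$ still vanishing on $T$, exactly as in the last paragraph of the proof of Theorem \ref{theo:rep}.

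The main obstacle is the constancy step: one must carry out the Lagrangian/mean-value argument of Lemma \ref{lem:constant} while ensuring that intervals where $\lambda_0 \equiv 0$ are handled using only the weaker hypothesis that $f(C(g) \cap S(g))$ is finite, which is legitimate precisely because paths inside $T$ never leave $S(g) \times \R^{m+1}$. A secondary but important subtlety lies in the fourth step: unlike in Theorem \ref{theo:rep}, we do not seek a decomposition of $V_\C(h_\text{FJ})$ itself, so it is enough to cover only the values attained on $T$ rather than all of $V(h_\text{FJ})$, and this is exactly what allows the weaker finiteness assumption to produce a representation vanishing on the restricted set $T$.
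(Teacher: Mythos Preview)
Your proposal is correct and follows essentially the same route as the paper: decompose $T=(S(g)\times\R^{m+1})\cap V(h_\text{FJ})$ into semi-algebraically path-connected components, prove constancy of $f$ on each via the Lagrangian/mean-value argument restricted to $T$ (this is the paper's Lemma~\ref{lem:constant.variation}), slice $V_\C(h_\text{FJ})$ by the level sets $f=t_i$ (this is the paper's Lemma~\ref{lem:quadra.variation} with $A=S(g)\times\R^{m+1}$, which you have inlined), and then apply Lemma~\ref{lem:const.func}. The only cosmetic difference is that in the preordering case the paper first obtains $p\in P(g,b)[x,\bar\lambda]$ and strips the $b$-term, whereas you apply Lemma~\ref{lem:const.func} with $g$ alone and get $q\in P(g)[x,\bar\lambda]$ directly; both are valid.
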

The proof of Theorem \ref{theo:rep.variation} is postponed to Section \ref{sec:proof.variation}.

Given a real matrix $A$, we denote by $\rank^+(A)$ the largest number of columns of $A$ whose convex hull over $\R$ has no zero.
Let  $C^+(g)$ be the set of critical points associated with $g$ defined by
\begin{equation*}
    C^+(g):=\{x\in \R^n\,:\,\rank^+(\varphi^g(x))< m\}.
\end{equation*}
We state the second variation of Theorem \ref{theo:rep} in the following theorem:
\begin{theorem}\label{theo:rep.plus}
Let $f,g_1,\dots,g_m\in\R[x]$. 
Assume that $f$ is non-negative on $S(g)$  with $g:=(g_1,\dots,g_m)$, and $f(C^+(g))$ is finite. 
Then there exists $q\in P(g)[x,\bar \lambda]$ such that $f-q$ vanishes on $V(h_\text{FJ}^+)$, where $\bar\lambda:=(\lambda_0,\dots,\lambda_m)$ and
\begin{equation}\label{eq:.polyFJ.plus}
    h_\text{FJ}^+:=(\lambda_0^2\nabla f-\sum_{j=1}^m \lambda_j^2 \nabla g_j,\lambda_1^2g_1,\dots,\lambda_m^2g_m,1-\sum_{j=0}^m\lambda_j^2)\,.
\end{equation}
Moreover, if $S(g)$ satisfies the Archimedean condition, we can take $q\in Q(g)[x,\bar \lambda]$.
\end{theorem}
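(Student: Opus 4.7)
The plan is to mirror step-for-step the proof of Theorem \ref{theo:rep}, replacing each occurrence of $\lambda_j$ in the Fritz John polynomials by $\lambda_j^2$ and replacing the role of $C(g)$ by $C^+(g)$. The key algebraic observation is that setting $\mu_j := \lambda_j^2$ converts the defining equations of $h_\text{FJ}^+$ into the genuine Fritz John conditions with non-negative multipliers summing to one, which is exactly the condition picked up by $\rank^+$ rather than $\rank$.

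First, I would establish the direct analogue of Lemma \ref{lem:equa}, namely
\begin{equation*}
C^+(g) = \pi\bigl(V(h_\text{FJ}^+) \cap \{\lambda_0 = 0\}\bigr).
\end{equation*}
Indeed, the equations defining $V(h_\text{FJ}^+)\cap\{\lambda_0=0\}$ read $\sum_{j=1}^m \mu_j \nabla g_j(x)=0$, $\mu_j g_j(x)=0$, $\sum_{j=1}^m \mu_j=1$ with $\mu_j:=\lambda_j^2\ge 0$, which is precisely the condition that $0$ lies in the convex hull of the columns of $\varphi^g(x)$, i.e.\ $\rank^+(\varphi^g(x))<m$. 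The lift is given by $\lambda_j=\sqrt{\mu_j}$, $\lambda_0=0$.

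Second, I would prove the analogue of Lemma \ref{lem:constant}: $f$ is constant on every semi-algebraically path-connected component $W$ of $V(h_\text{FJ}^+)$. Given two points in $W$, choose a continuous semi-algebraic path $\phi(\tau)=(x(\tau),\bar\lambda(\tau))\subset W$ joining them, and split $[0,1]$ along the closed set $\lambda_0^{-1}(0)$ into the intervals $[a_j,b_j]$ (where $\lambda_0\equiv 0$) and the complementary open intervals (where $\lambda_0\ne 0$). On each open interval consider the modified Lagrangian
\begin{equation*}
L(x,\bar\lambda)=f(x)-\sum_{j=1}^m\frac{\lambda_j^2}{\lambda_0^2}\,g_j(x),
\end{equation*}
which coincides with $f$ on $V(h_\text{FJ}^+)\setminus\{\lambda_0=0\}$ by the equations $\lambda_j^2 g_j=0$, and whose $x$-gradient vanishes there by the equation $\lambda_0^2\nabla f=\sum_{j=1}^m\lambda_j^2\nabla g_j$. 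Then $L\circ\phi$ is semi-algebraic and continuous, hence continuous piecewise-differentiable by Lemmas \ref{lem:composit.semi-al} and \ref{lem:semial.func.anal}, with zero subgradient; Lemma \ref{lem:mean.val} yields constancy of $f\circ x$ on the closure of each such open interval. On each $[a_j,b_j]$, the curve $x(\tau)$ lies in $C^+(g)$ by the first step, so $f(x(\tau))\in f(C^+(g))$, which is finite; by continuity $f\circ x$ must be constant there too. Stitching the intervals through shared endpoints gives $f(x^{(0)})=f(x^{(1)})$.

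Third, I would finish exactly as in the proof of Theorem \ref{theo:rep}. Decomposing $V(h_\text{FJ}^+)$ into finitely many semi-algebraically path-connected components (Lemma \ref{eq:component.semi}) shows $f(V(h_\text{FJ}^+))$ is finite; applying Lemma \ref{lem:quadra} to $W=V_\C(h_\text{FJ}^+)\cap\R^{n+m+1}$ decomposes it as the real trace of a finite disjoint union of complex varieties on each of which $f$ is constant; then Lemma \ref{lem:const.func}, applied with $b:=1-\sum_{j=0}^m\lambda_j^2$, produces $p\in P(g,b)[x,\bar\lambda]$ (resp.\ $Q(g,b)[x,\bar\lambda]$ in the Archimedean case, using \eqref{eq:large.ball}) such that $f-p$ vanishes on $V(h_\text{FJ}^+)$. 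Discarding the $b$-summand as in the end of the proof of Theorem \ref{theo:rep} (it vanishes on $V(h_\text{FJ}^+)$) yields the desired $q\in P(g)[x,\bar\lambda]$ (resp.\ $q\in Q(g)[x,\bar\lambda]$).

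The main obstacle is the Lagrangian step, for the denominator $\lambda_0^2$ in $L$ is singular exactly where the behaviour of the path is least controlled. The remedy is the same as in the original proof: restrict $L$ strictly to open subintervals where $\lambda_0\ne 0$, handle the vanishing-$\lambda_0$ intervals by directly invoking the finiteness of $f(C^+(g))$, and use only continuity of $f\circ x$ to glue the two regimes at the endpoints. Squaring the multipliers introduces no new analytic pathology beyond this, so the original strategy transfers verbatim.
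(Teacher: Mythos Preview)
Your proposal is correct and follows exactly the route taken in the paper: the paper's own proof consists of the single sentence ``do similarly to the proof of Theorem \ref{theo:rep} by replacing $C(g)$ and $h_\text{FJ}$ with $C^+(g)$ and $h_\text{FJ}^+$,'' together with the verification that $C^+(g)=\pi(V(h_\text{FJ}^+)\cap\{\lambda_0=0\})$ and the remark that the Lagrangian becomes $f(x)\pm\sum_j(\lambda_j/\lambda_0)^2 g_j(x)$. Your three steps expand precisely these points, and your sign on the Lagrangian is the correct one.
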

\begin{proof}
To prove Theorem \ref{theo:rep.plus}, we do similarly to the proof of Theorem \ref{theo:rep} by replacing $C(g)$ and $h_\text{FJ}$ in Section \ref{sec:proof.rep} with $C^+(g)$ and $h_\text{FJ}^+$, respectively.
Note that the equality $C^+(g)=\pi(V(h_\text{FJ}^+)\cap \{\lambda_0=0\})$ follows from the following equivalences:
\begin{equation}\label{eq:equi.Cplus}
    \begin{array}{rl}
         &  x\in C^+(g)\\
        \Leftrightarrow & \rank^+(\varphi^g(x)) < m\\
        \Leftrightarrow & \exists \lambda\in\R^{m}\,:\, \sum_{j=1}^m\lambda_j^2=1\,,\, \sum_{j=1}^m \lambda_j^2 \nabla g_j( x)=0\,,\, \lambda_j^2 g_j(x) =0\\
        \Leftrightarrow & \exists \bar \lambda\in\R^{m+1}\,:\,
        \sum_{j=0}^m\lambda_j^2=1\,,\,\lambda_0=0\,,\,\lambda_0^2\nabla f(x)=\sum_{j=1}^m \lambda_j^2 \nabla g_j(x)\,,\, \lambda_j^2 g_j(x) =0\\
        \Leftrightarrow & \exists \bar \lambda\in\R^{m+1}\,:\,(x,\bar\lambda)\in V(h_\text{FJ}^+)\cap \{\lambda_0=0\}\\
        \Leftrightarrow& x\in\pi(V(h_\text{FJ}^+)\cap \{\lambda_0=0\})\,.
    \end{array}
\end{equation}
Moreover, the Lagrangian function becomes
$L(x,\bar \lambda) = f(x)+\sum_{j=1}^m \left(\frac{\lambda_j}{\lambda_0}\right)^2 g_j (x)$.
\end{proof}
We state the third variation of Theorem \ref{theo:rep} in the following theorem:
\begin{theorem}\label{theo:rep.plus2}
Let $f,g_1,\dots,g_m\in\R[x]$. 
Assume that $f$ is non-negative on $S(g)$  with $g:=(g_1,\dots,g_m)$ and $f(C^+(g)\cap S(g))$ is finite. 
Then there exists $q\in P(g)[x,\bar \lambda]$ such that $f-q$ vanishes on $S(g)\cap V(h_\text{FJ}^+)$, where $\bar\lambda:=(\lambda_0,\dots,\lambda_m)$ and $h_\text{FJ}^+$ is defined as in \eqref{eq:.polyFJ.plus}.
Moreover, if $S(g)$ satisfies the Archimedean condition, we can take $q\in Q(g)[x,\bar \lambda]$.
\end{theorem}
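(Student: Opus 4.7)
The plan is to prove Theorem \ref{theo:rep.plus2} by combining the two prior refinements already carried out in the paper: the squared-multiplier substitution from Theorem \ref{theo:rep.plus} (replacing $h_\text{FJ}$, $C(g)$, and the Lagrangian by $h_\text{FJ}^+$, $C^+(g)$, and $L(x,\bar\lambda)=f(x)+\sum_{j=1}^m(\lambda_j/\lambda_0)^2g_j(x)$), and the semi-algebraic restriction to $S(g)\times\R^{m+1}$ that is the content of Theorem \ref{theo:rep.variation}. Denote $D:=(S(g)\times\R^{m+1})\cap V(h_\text{FJ}^+)$, the semi-algebraic set on which we want $f-q$ to vanish.

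First I would establish an analogue of Lemma \ref{lem:constant}: $f$ is constant on every semi-algebraically path connected component $W$ of $D$. Given any two points in $W$, Lemma \ref{eq:component.semi} and \cite[Theorem 1.8.1]{pham2016genericity} provide a continuous piecewise-differentiable semi-algebraic path $\phi(\tau)=(x(\tau),\bar\lambda(\tau))$, $\tau\in[0,1]$, contained in $W$. Decompose $[0,1]$ using the closed set $\lambda_0^{-1}(0)$ into the closed intervals $[a_j,b_j]$ on which $\lambda_0\equiv 0$ and the complementary open intervals on which $\lambda_0\ne 0$, exactly as in the proof of Lemma \ref{lem:constant}. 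On each open interval, the Lagrangian $L$ from the proof of Theorem \ref{theo:rep.plus} coincides with $f\circ x$ on $V(h_\text{FJ}^+)\setminus\{\lambda_0=0\}$ (because $\lambda_j^2 g_j=0$) and, being semi-algebraic and continuous along $\phi$, is continuous piecewise analytic by Lemma \ref{lem:semial.func.anal}; the critical-point relation $\lambda_0^2\nabla f=\sum\lambda_j^2\nabla g_j$ ensures that $L\circ\phi$ has zero subgradient, so Lemma \ref{lem:mean.val} gives constancy of $f\circ x$ on each such open interval, and continuity extends this to the closed interval. On each interval $[a_j,b_j]$ where $\lambda_0\equiv 0$, the equivalence \eqref{eq:equi.Cplus} combined with $x(\tau)\in S(g)$ forces $x([a_j,b_j])\subset C^+(g)\cap S(g)$; since $f(C^+(g)\cap S(g))$ is finite by hypothesis and $\tau\mapsto f(x(\tau))$ is continuous, it must be constant there too. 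Gluing these pieces shows $f$ is constant on $W$.

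Second, since $D$ has only finitely many such components (Lemma \ref{eq:component.semi}), the image $f(D)=\{t_1,\dots,t_r\}$ is finite, and by $f\ge 0$ on $S(g)$ we have $t_j\ge 0$. Define the pairwise disjoint complex varieties
\[
W_j := V_\C(h_\text{FJ}^+)\cap V_\C(f-t_j)\subset\C^{n+m+1},
\]
each cut out by finitely many polynomials in $\R[x,\bar\lambda]$, on which $f$ is the constant $t_j$. By construction $D\subset(W_1\cup\dots\cup W_r)\cap\R^{n+m+1}$. Applying Lemma \ref{lem:const.func} in the extended variables $(x,\bar\lambda)$, with the semi-algebraic set $S(g)\times\R^{m+1}$ defined by the polynomials $g_1,\dots,g_m$ (viewed in $\R[x,\bar\lambda]$, independent of $\bar\lambda$), yields $q\in P(g)[x,\bar\lambda]$ with $f-q$ vanishing on $(W_1\cup\dots\cup W_r)\cap\R^{n+m+1}$, hence on $D$.

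For the Archimedean refinement, I would follow the closing part of the proof of Theorem \ref{theo:rep}: introduce $b=1-\lambda_0^2-\dots-\lambda_m^2$, observe via \eqref{eq:large.ball} that $S(g,b)\subset\R^{n+m+1}$ is Archimedean, invoke the Archimedean case of Lemma \ref{lem:const.func} to get $p\in Q(g,b)[x,\bar\lambda]$ vanishing together with $f$ on $(W_1\cup\dots\cup W_r)\cap\R^{n+m+1}\supset D$, and then discard the $b$-term as in \eqref{eq:quadra.rep.b} since $b=0$ on $V(h_\text{FJ}^+)\supset D$, obtaining $q\in Q(g)[x,\bar\lambda]$. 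The main obstacle is the first step: verifying that the Lagrangian computation for the squared-multiplier FJ ideal still produces a zero subgradient along every semi-algebraic path in $D$ when $\lambda_0\ne 0$, and that the finite-image hypothesis $f(C^+(g)\cap S(g))<\infty$ (rather than $f(C^+(g))<\infty$) is strong enough to handle the $\lambda_0=0$ portion; everything downstream is an essentially formal adaptation of the arguments already given for Theorems \ref{theo:rep} and \ref{theo:rep.plus}.
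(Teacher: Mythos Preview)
Your proposal is correct and takes essentially the same approach as the paper: the paper's own proof of Theorem~\ref{theo:rep.plus2} is a one-line instruction to rerun the argument of Section~\ref{sec:proof.variation} (i.e.\ the proof of Theorem~\ref{theo:rep.variation}) with $C(g)$ and $h_\text{FJ}$ replaced by $C^+(g)$ and $h_\text{FJ}^+$, which is exactly the combination of the squared-multiplier substitution from Theorem~\ref{theo:rep.plus} with the restriction to $S(g)\times\R^{m+1}$ that you carry out in detail. Your minor streamlining in the preordering case---invoking Lemma~\ref{lem:const.func} directly with $P(g)$ rather than with $P(g,b)$ and then dropping the $b$-term---is harmless, since $f$ is already non-negative on all of $S(g)\times\R^{m+1}$ and each $W_j$ meets that set by construction.
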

\begin{proof}
The proof is processed similarly to the one of Theorem \ref{theo:rep} by replacing $C(g)$ and $h_\text{FJ}$ in Section \ref{sec:proof.rep} with $C^+(g)$ and $h_\text{FJ}^+$, respectively.
\end{proof}

\begin{remark}
Since $C(g)\cap S(g)$, $C^+(g)$ and $C^+(g)\cap S(g)$ are subsets of $C(g)$, the assumptions of Theorems \ref{theo:rep.variation}, \ref{theo:rep.plus} and \ref{theo:rep.plus2} that $f(C(g)\cap S(g))$, $f(C^+(g))$ and $f(C^+(g)\cap S(g))$ are finite hold generically thanks to Theorem \ref{theo:gen}, respectively.
\end{remark}
\subsection{Proofs}
\label{sec:proof.variation}
We use the same notation as in Section \ref{sec:proof}.
We generalize Lemma \ref{lem:quadra} in the following lemma:
\begin{lemma}\label{lem:quadra.variation}
Let $f\in\R[x]$, let $W$ be a complex variety defined by finitely many polynomials in $\R[x]$, and let $A$ be a semi-algebraic subset of $\R^n$. 
Assume that $f(A\cap W)$ is finite. 
Then there exists a finite sequence of subsets $W_1,\dots,W_r$ such that the following conditions hold:
\begin{enumerate}
\item $W_1,\dots,W_r$ are pairwise disjoint complex varieties defined by finitely many polynomials in $\R[x]$;
\item for $j=1,\dots,r$, $W_j\subset W$, and  $f$ is constant on $W_j$;
\item $(W_1\cup\dots\cup W_r)\cap A=W\cap A$.
\end{enumerate}
\end{lemma}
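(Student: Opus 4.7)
The plan is to adapt the proof of Lemma \ref{lem:quadra} essentially verbatim, systematically replacing $\R^n$ by the semi-algebraic set $A$. The earlier argument never uses any property of $\R^n$ beyond its role as the ambient set over which $f$ has finite image on the intersection with $W$, so the same construction should work here without any new ingredient.

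Concretely, I would first invoke the finiteness hypothesis to enumerate $f(A\cap W) = \{t_1,\dots,t_r\} \subset \R$ with the $t_j$ pairwise distinct, and then define $W_j := W \cap V_\C(f - t_j)$ for $j = 1,\dots,r$. Since $W$ is cut out by finitely many polynomials in $\R[x]$ and $f - t_j \in \R[x]$, each $W_j$ is again a complex variety defined by finitely many polynomials in $\R[x]$, and the inclusion $W_j \subset W$ is immediate. The polynomial $f$ is identically $t_j$ on $W_j$ by construction, so $f$ is constant on each $W_j$; this settles condition (2). Pairwise disjointness, the remaining part of condition (1), follows at once: if $x \in W_i \cap W_j$ then $t_i = f(x) = t_j$, forcing $i = j$.

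The only condition that requires a genuine verification is (3), i.e., $(W_1 \cup \dots \cup W_r) \cap A = W \cap A$. The inclusion $\subset$ is immediate from $W_j \subset W$. For the reverse inclusion I would take $x \in W \cap A$; then $f(x) \in f(A \cap W) = \{t_1,\dots,t_r\}$, so $f(x) = t_j$ for some $j$, placing $x$ in $V_\C(f-t_j)$ and hence in $W_j$. Combined with $x \in A$, this gives $x \in (W_1 \cup \dots \cup W_r) \cap A$.

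I do not anticipate any real obstacle, since Lemma \ref{lem:quadra} is precisely the specialization $A = \R^n$ and its proof uses only the ambient-set role of $\R^n$ together with the finiteness of the image of $f$. The only subtle point worth flagging is that semi-algebraicity of $A$ plays no role in this lemma itself; the hypothesis is stated that way because the lemma is intended to feed into variants of Theorem \ref{theo:rep} (see Theorems \ref{theo:rep.variation} and \ref{theo:rep.plus2}) where one later couples it with results such as Lemma \ref{eq:component.semi} that do exploit semi-algebraic structure.
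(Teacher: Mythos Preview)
Your proposal is correct and matches the paper's own approach exactly: the paper simply states that the proof is obtained from that of Lemma~\ref{lem:quadra} by replacing $\R^n$ with $A$, which is precisely the construction $W_j := W \cap V_\C(f - t_j)$ you carry out. Your side remark that semi-algebraicity of $A$ plays no role in this lemma itself is also accurate.
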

The proof of Lemma \ref{lem:quadra.variation} is similar to the one of Lemma \ref{lem:quadra} since we only need to replace $\R^n$ in the proof of Lemma \ref{lem:quadra} with $A$.

The following lemma extends \cite[Lemma 3.3]{demmel2007representations} to the case of varieties defined by the Fritz John conditions:
\begin{lemma}\label{lem:constant.variation}
Let $f,g_1,\dots,g_m\in\R[x]$. 
Assume that $f(C(g))$ with $g:=(g_1,\dots,g_m)$ is finite.
Let $h_\text{FJ}$ be defined as in \eqref{eq:.polyFJ}.
Let $W$ be a semi-algebraically path connected component of $(S(g)\times\R^{m+1})\cap V(h_\text{FJ})$. Then $f$ is constant on $W$.
\end{lemma}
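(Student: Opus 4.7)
The plan is to mirror the proof of Lemma \ref{lem:constant}, making only the bookkeeping adjustments forced by restricting the ambient set from $V(h_\text{FJ})$ to $(S(g)\times\R^{m+1})\cap V(h_\text{FJ})$. Fix two points $(x^{(0)},\bar\lambda^{(0)}),(x^{(1)},\bar\lambda^{(1)})\in W$. Since $W$ is semi-algebraically path connected, I take a continuous semi-algebraic path $\phi(\tau)=(x(\tau),\bar\lambda(\tau))$, $\tau\in[0,1]$, lying in $W$ that joins the two endpoints. The new observation, and the only substantive change from the original proof, is that $W\subset S(g)\times\R^{m+1}$ forces $x(\tau)\in S(g)$ throughout, while $(x(\tau),\bar\lambda(\tau))\in V(h_\text{FJ})$ supplies the Fritz John equations along $\phi$.

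Next, I would use continuity of $\tau\mapsto\lambda_0(\tau)$ to partition $[0,1]$ into finitely many maximal closed subintervals $[a_j,b_j]$ on which $\lambda_0\equiv 0$, together with the complementary open subintervals on which $\lambda_0\ne 0$, exactly as in the proof of Lemma \ref{lem:constant}. On each $[a_j,b_j]$, Lemma \ref{lem:equa} gives $x(\tau)\in C(g)$. Since $f(C(g))$ is finite by hypothesis and $\tau\mapsto f(x(\tau))$ is continuous, the set $f(x([a_j,b_j]))$ is simultaneously a finite set and the continuous image of a connected set, hence a singleton; therefore $f\circ x$ is constant on each $[a_j,b_j]$.

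On each complementary open subinterval I would introduce the Lagrangian $L(x,\bar\lambda)=f(x)-\sum_{j=1}^m(\lambda_j/\lambda_0)g_j(x)$, which is well-defined there and, by the complementary slackness $\lambda_jg_j(x)=0$ valid on $V(h_\text{FJ})$, coincides with $f(x)$ along $\phi$. The composition $L\circ\phi$ is semi-algebraic by Lemma \ref{lem:composit.semi-al} and continuous, hence piecewise differentiable by Lemma \ref{lem:semial.func.anal}; the Fritz John stationarity equation $\lambda_0\nabla f=\sum_j\lambda_j\nabla g_j$ forces its subgradient to vanish wherever it exists. Lemma \ref{lem:mean.val} then yields that $L\circ\phi$, and therefore $f\circ x$, is constant on the closure of each such open subinterval. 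Continuity of $f\circ x$ glues these constants across the whole partition, producing $f(x^{(0)})=f(x^{(1)})$, so $f$ is constant on $W$.

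The main obstacle, inherited from the proof of Lemma \ref{lem:constant}, is the transition between the two regimes: the Lagrangian $L$ is ill-defined on $\{\lambda_0=0\}$, so one cannot run a single mean-value argument across the whole path. I would handle this as in the original proof, by arguing with $f\circ x$ rather than $L\circ\phi$ at the boundary points $a_j,b_j$ and invoking continuity of $f$ and $\phi$ to identify the common value on adjacent pieces. The restriction to $S(g)\times\R^{m+1}$ does not cause any extra difficulty: the argument only uses that $\phi$ stays inside $V(h_\text{FJ})$, and the hypothesis $f(C(g))$ finite is used identically on the $\lambda_0\equiv 0$ intervals.
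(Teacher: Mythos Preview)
Your proposal is correct and follows the same template as the paper: pick a semi-algebraic path in $W$, split $[0,1]$ according to the zero set of $\lambda_0$, use the Lagrangian argument on the $\lambda_0\neq 0$ pieces and the finiteness hypothesis on the $\lambda_0=0$ pieces, then glue by continuity.

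There is one point where the paper's write-up differs from yours, and it is worth flagging. On the $\lambda_0\equiv 0$ subintervals you invoke only $x(\tau)\in C(g)$ and the finiteness of $f(C(g))$, and you explicitly remark that the membership $x(\tau)\in S(g)$ plays no role. The paper, by contrast, uses precisely this membership: it records that
\[
x([\tau_1,\tau_2])\subset \pi\bigl((S(g)\times\R^{m+1})\cap V(h_\text{FJ})\cap\{\lambda_0=0\}\bigr)=S(g)\cap C(g),
\]
and then appeals to the finiteness of $f(S(g)\cap C(g))$. Under the hypothesis as literally stated in the lemma ($f(C(g))$ finite) this distinction is immaterial, since $S(g)\cap C(g)\subset C(g)$; but the paper's formulation is exactly what is needed to prove Theorem~\ref{theo:rep.variation}, whose hypothesis is only that $f(C(g)\cap S(g))$ be finite. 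In other words, the lemma as written almost certainly has a typo in its hypothesis, and the paper's proof is tailored to the intended weaker assumption. Your argument, while valid for the statement as printed, does not quite deliver the version the paper actually uses downstream.
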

\begin{proof}
The proof is similar to the one of Lemma \ref{lem:constant}. 
When proving that $\tau\mapsto f(x(\tau))$ is constant on $[a_j,b_j]$, we note that \begin{equation}
f(S(g)\cap C(g))=f(S(g)\cap \pi(V(h_\text{FJ})\cap \{\lambda_0=0\}))
\end{equation}
is finite since $f(S(g)\cap\pi(V(h_\text{FJ})\cap \{\lambda_0=0\}))\supset f(x([\tau_1,\tau_2]))$, which is because
\begin{equation}
\begin{array}{rl}
S(g)\cap\pi(V(h_\text{FJ})\cap \{\lambda_0=0\})=&\pi((S(g)\times\R^{m+1})\cap V(h_\text{FJ})\cap \{\lambda_0=0\})\\
\supset& \pi(W\cap \{\lambda_0=0\}) \supset x([\tau_1,\tau_2])\,.
\end{array}
\end{equation}
\end{proof}

\subsubsection*{Proof of Theorem \ref{theo:rep}}
\begin{proof}
Using Lemma \ref{eq:component.semi}, we decompose $(S(g)\times\R^{m+1})\cap V(h_\text{FJ})$ into semi-algebraically path connected components:
$Z_1,\dots,Z_s$.
Lemma \ref{lem:constant.variation} yields that $f$ is constant on $Z_i$.
Thus $f((S(g)\times\R^{m+1})\cap V(h_\text{FJ}))$ is finite.
Observe that $(S(g)\times\R^{m+1})\cap V(h_\text{FJ})=(S(g)\times\R^{m+1})\cap V_\C(h_\text{FJ})$.
By using Lemma \ref{lem:quadra.variation}, we obtain a finite sequence of subsets $W_{1},\dots,W_{r}$ such that the following conditions hold:
\begin{itemize}
\item $W_{1},\dots,W_{r}$ are pairwise disjoint complex varieties defined by finitely many polynomials in $\R[x,\bar\lambda]$;
\item for $j=1,\dots,r$, $W_{j}\subset V_\C(h_\text{FJ})$, and  $f$ is constant on $W_{j}$;
\item $(W_{1}\cup\dots\cup W_{r})\cap (S(g)\times\R^{m+1})=V_\C(h_\text{FJ})\cap (S(g)\times\R^{m+1})=(S(g)\times\R^{m+1})\cap V(h_\text{FJ})$.
\end{itemize}
Let $D$ be the union of $W_{1},\dots,W_{r}$.
Let $b=1-\lambda_0^2-\dots-\lambda_m^2$.
From this, Lemma \ref{lem:const.func} yields that there exists $p\in P(g,b)[x,\bar\lambda]$ such that $f-p$ vanishes on $D\cap\R^{n+m+1}\supset (S(g)\times\R^{m+1})\cap V(h_\text{FJ})$.
We write $p$ as in \eqref{eq:preo.rep.b}
for some $\sigma_\alpha,\psi_\beta\in\Sigma^2[x,\bar{\lambda}]$.
Let 
$q=\sum_{\alpha\in\{0,1\}^m}\sigma_\alpha g^\alpha\in P(g)[x,\bar\lambda]$.
Since $b=0$ on $V(h_\text{FJ})$, it holds that $f=p=q$ on $(S(g)\times\R^{m+1})\cap V(h_\text{FJ})$.

Assume that $S(g)$ satisfies the Archimedean condition. Then there exists $R>0$ such that $g_{m+1}=R-x_1^2-\dots-x_n^2\in Q(g)[x]$.
It implies that $S(g,b)$ with $b=1-\lambda_0^2-\dots-\lambda_m^2$ satisfies the Archimedean condition due to \eqref{eq:large.ball}.
From this, Lemma \ref{lem:const.func} yields that there exists $p\in Q(g,b)[x,\bar\lambda]$ such that $f-p$ vanishes on $D\cap\R^{n+m+1}\supset (S(g)\times\R^{m+1})\cap V(h_\text{FJ})$.
We write $p$ as in \eqref{eq:quadra.rep.b}
for some $\sigma_j\in\Sigma^2[x,\bar{\lambda}]$.
Let 
$q=\sigma_0+\sum_{j=1}^m\sigma_j g_j\in Q(g)[x,\bar\lambda]$.
Since $b=0$ on $V(h_\text{FJ})$, $f=p=q$ on $(S(g)\times\R^{m+1})\cap V(h_\text{FJ})$.
This completes the proof. 
\end{proof}

\subsection{Examples}
\label{sec:examples.variation}
This section illustrates our Nichtnegativstellens\"atze stated in Theorems \ref{theo:rep.variation}, \ref{theo:rep.plus}, and \ref{theo:rep.plus2} with several explicit examples.
The following lemma shows a case where $S(g)$ is convex and $C^+(g)$ is empty:

\begin{lemma}\label{lem:convex.non-emptyinter}
Let $g=(g_1,\dots,g_m)$ with $g_j\in\R[x]$. 
Assume that each $g_j$ is concave, and $S(g)$ has a non-empty interior.
Then $S(g)$ is convex and $C^+(g)=\emptyset$.
\end{lemma}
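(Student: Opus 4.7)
The plan is to handle the two claims separately. The convexity of $S(g)$ is immediate: since each $g_j$ is concave, its superlevel set $\{x\in\R^n : g_j(x)\ge 0\}$ is convex, and $S(g)$ is a finite intersection of these. The real content is $C^+(g) = \emptyset$, and I would argue this by contradiction using the maximum principle for concave functions.

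First I would translate the condition $x \in C^+(g)$ into the language already worked out in the proof of Theorem \ref{theo:rep.plus} (specifically the chain of equivalences in \eqref{eq:equi.Cplus}). Namely, $x \in C^+(g)$ is equivalent to the existence of non-negative scalars $\mu_j := \lambda_j^2$, not all zero and with $\sum_{j=1}^m \mu_j = 1$, satisfying
$$\sum_{j=1}^m \mu_j \nabla g_j(x) = 0 \qquad\text{and}\qquad \mu_j g_j(x) = 0,\ j=1,\dots,m.$$
This is exactly the statement that $0$ lies in the convex hull of the columns of $\varphi^g(x)$, which is the definition of $\rank^+(\varphi^g(x)) < m$.

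Now suppose for contradiction that such an $x$ and such multipliers $\mu_j$ exist. Let $J := \{j : \mu_j > 0\}$, which is non-empty, and consider the aggregate
$$G(y) := \sum_{j\in J} \mu_j\, g_j(y).$$
Because each $g_j$ is concave and each $\mu_j > 0$ for $j \in J$, $G$ is concave. The stationarity condition gives $\nabla G(x) = 0$, so by concavity $x$ is a global maximizer of $G$, and by complementary slackness $G(x) = 0$.

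The only delicate step is to exploit the hypothesis that $S(g)$ has non-empty interior; this is exactly what prevents a degenerate pathology. Pick any $y_0$ in the interior of $S(g)$, so $g_j(y_0) > 0$ for every $j = 1,\dots,m$. Then $G(y_0) = \sum_{j\in J}\mu_j g_j(y_0) > 0$, contradicting the fact that $G$ attains its maximum value $0$ at $x$. Hence no such $x$ exists, and $C^+(g) = \emptyset$.
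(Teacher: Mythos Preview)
Your proof is correct and follows essentially the same approach as the paper's: argue by contradiction, form the aggregate $G=\sum_j \lambda_j^2 g_j$ (you restrict to the active index set $J$, which is cosmetically different but immaterial), use concavity to conclude that the stationary point is a global maximizer with value $0$, and derive a contradiction from strict positivity at an interior point of $S(g)$. The only remark is that your assertion ``$g_j(y_0)>0$ for every $j$'' at an interior point $y_0$ tacitly uses that a concave polynomial which is non-negative on an open set and vanishes at an interior point must be identically zero; the paper makes the same implicit step.
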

\begin{proof}
It is a simple matter to prove that $S(g)$ is convex.
Assume by contradiction that there is $y\in C^+(g)$. 
Then by \eqref{eq:equi.Cplus}, there exists $\lambda\in\R^{m}\backslash\{0\}$ such that $\sum_{j=1}^m \lambda_j^2 \nabla g_j( y)=0$ and $\lambda_j^2 g_j(y) =0$.
Set $G(x)=\sum_{j=1}^m \lambda_j^2 g_j( x)$.
Then $G$ is concave since all $g_j$ is concave.
In addition, $\nabla G(y)=0$ yields that $G(y)=0$ is the maximal value of $G$.
Let $a$ be in the interior of $S(g)$. 
Then $0\ge G(a)=\sum_{j=1}^m \lambda_j^2 g_j(a)$ implies that $\lambda_1=\dots=\lambda_m=0$ since all $g_j(a)$ are positive.
This contradicts $\lambda\ne 0$, and hence it holds that $C^+(g)=\emptyset$.
\end{proof}
The following lemma is a consequence of Lemma \ref{lem:convex.non-emptyinter} and Theorem \ref{theo:rep.plus}:
\begin{lemma}\label{lem:rep.convex.nomempinter}
Let $f,g_1,\dots,g_m\in\R[x]$. 
Assume that $f$ is non-negative on $S(g)$  with $g:=(g_1,\dots,g_m)$, each $g_j$ is concave, and $S(g)$ has a non-empty interior.
Then $S(g)$ is convex, and there exists $q\in P(g)[x,\bar \lambda]$ such that $f-q$ vanishes on $V(h_\text{FJ}^+)$, where $\bar\lambda:=(\lambda_0,\dots,\lambda_m)$, and $h_\text{FJ}^+$ is defined as in \eqref{eq:.polyFJ.plus}.
Moreover, if $S(g)$ satisfies the Archimedean condition, we can take $q\in Q(g)[x,\bar \lambda]$.
\end{lemma}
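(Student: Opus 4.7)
The plan is to simply chain the two prior results together. First, I would invoke Lemma \ref{lem:convex.non-emptyinter} directly. Since by hypothesis each $g_j$ is concave and $S(g)$ has non-empty interior, this lemma immediately tells us two things at once: the set $S(g)$ is convex, and the critical set $C^+(g)$ is empty. This delivers the first assertion of the lemma (convexity of $S(g)$) for free, and sets up the key hypothesis needed for the second assertion.

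Next I would observe that since $C^+(g) = \emptyset$, its image $f(C^+(g)) = \emptyset$ is finite (vacuously, as the empty set has cardinality zero, which is a non-negative integer in the sense made explicit in the introduction). Hence the hypothesis of Theorem \ref{theo:rep.plus} is satisfied. Applying that theorem then yields the existence of $q \in P(g)[x,\bar\lambda]$ such that $f-q$ vanishes on $V(h_\text{FJ}^+)$, which is exactly the desired representation.

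For the Archimedean clause, I would again appeal to Theorem \ref{theo:rep.plus}: under the additional hypothesis that $S(g)$ satisfies the Archimedean condition, that theorem allows us to strengthen the conclusion to $q \in Q(g)[x,\bar\lambda]$, which completes the proof.

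Since the proof is essentially a two-line deduction, there is no real obstacle; the lemma is stated as a corollary of the two cited results, and the main content is the verification that the assumptions line up. The only subtlety worth noting is the convention that $f(\emptyset)$ is considered finite, which is legitimate under the paper's definition of finiteness of a set in terms of its cardinal being a non-negative integer.
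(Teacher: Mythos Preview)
Your proposal is correct and matches the paper's own treatment exactly: the paper states this lemma as an immediate consequence of Lemma~\ref{lem:convex.non-emptyinter} and Theorem~\ref{theo:rep.plus}, without further argument. Your observation that $f(\emptyset)$ counts as finite under the paper's convention is the only point worth making explicit, and you handle it correctly.
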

To illustrate the representations in Lemma \ref{lem:rep.convex.nomempinter}, see Examples \ref{exam:ball}, \ref{exam:box} and \ref{exam:simplex}.
Note that if $C(g)=\emptyset$, then so is $C^+(g)$ since $C^+(g)\subset C(g)$.

Contrary to Lemma \ref{lem:rep.convex.nomempinter}, the following example shows the representation of any polynomial non-negative on a non-convex semi-algebraic set $S(g)$:
\begin{example}\label{exam:infinite.critical.variation}
Consider the problem \eqref{eq:pop} with $n=2$, $m=3$ and $g=(g_1,g_2,g_3)=(x_1+1,1-x_2^2,1-(x_1-1)^2-x_2^2)$. 
Then $S(g)$ is non-convex since it contains all points in the hypercube $[-1,1]^2$  but not in the open ball of center $(1,0)$ with unit radius.
The condition $\rank^+(\varphi^g(x))< m$ can be expressed as
\begin{equation}
\begin{cases}
\exists \lambda\in \R_+^3\backslash\{0\}\,:\,\sum_{j=1}^3\lambda_j\nabla g_j(x)=\lambda_1\begin{bmatrix}
1\\
0
\end{bmatrix}+\lambda_2\begin{bmatrix}
0\\
-2x_2
\end{bmatrix}+\lambda_3\begin{bmatrix}
2(x_1-1)\\
-2x_2
\end{bmatrix}=0\,,\\
\lambda_1g_1(x)=\lambda_1(x_1+1)=0\,,\,\lambda_2g_2(x)=\lambda_2(1-x_2^2)=0\,,\\
\lambda_3g_3(x)=\lambda_3(1-(x_1-1)^2-x_2^2)=0\,.
\end{cases}
\end{equation}
It implies that $\lambda_1=0$ or $x_1=-1$.
If $\lambda_1=0$, $2\lambda_3(x_1-1)=0$ which gives $\lambda_3=0$ or $x_1=1$.
If $\lambda_1=\lambda_3=0$, $\lambda_2\ne 0$ which implies $-2x_2=0$ and $1-x_2^2=0$, hence this is impossible.
If $\lambda_1=0$ and $x_1=1$, then $-2(\lambda_2+\lambda_3)x_2=0$ and $\lambda_2(1-x_2^2)=\lambda_3(1-x_2^2)=0$.
If $\lambda_1=0$, $x_1=1$ and $x_2=0$, then $\lambda_2=\lambda_3=0$ which contradicts $\lambda\ne 0$.
If $\lambda_1=0$, $x_1=1$ and $\lambda_2+\lambda_3=0$, then $\lambda_2=\lambda_3=0$ (since $\lambda_j\ge 0$) which also contradicts $\lambda\ne 0$.
Thus we get $x_1=-1$.
Then $\lambda_1-4\lambda_3=0$, $-2(\lambda_2+\lambda_3)x_2=0$, $\lambda_2(1-x_2^2)=0$ and $\lambda_3(-3-x_2^2)=0$.
It implies $\lambda_3=0$, so $\lambda_1=0$, $\lambda_2x_2=0$ and $\lambda_2(1-x_2^2)=0$.
Since $\lambda_1=\lambda_3=0$, we obtain $\lambda_2>0$ which gives $x_2=0=1-x_2^2$.
This is impossible, hence $C^+(g)=\emptyset$ which implies that $f(C^+(g))=\emptyset$. 
By Theorem \ref{theo:rep.plus}, if $f$ is non-negative on $S(g)$, there exists $q\in  P(g)[x,\bar \lambda]$ and $f-q$ vanishes on $V(h_\text{FJ}^+)$.  
\end{example}

In the following example, we reconsider Example \ref{exam:counterexample}, where Theorem \ref{theo:rep} is inapplicable, but Theorem \ref{theo:rep.variation} is applicable:
\begin{example}\label{exam:counterexample.variation}
Consider the problem \eqref{eq:pop} with $n=2$, $m=2$, $f=x_1+x_2$ and $g=(-x_1^2,-x_2^2)$. 
Then we get $f^\star=0$, $S(g)=\{(0,0)\}$ and $(0,0)$ is the unique global minimizer for this problem.
We get $C(g)=\{(t,0),(0,t)\,:\,t\in\R\}$ which gives that $f(C(g))=\R$  is infinite.
However $C(g)\cap S(g)=\{(0,0)\}$ implies $f(C(g)\cap S(g))=\{0\}$ is finite.
Let $h_\text{FJ}$ be as in \eqref{eq:.polyFJ}.
Then 
\begin{equation}
V(h_\text{FJ})=\{(t,0,0,0,\pm 1),(0,t,0,\pm 1,0),(0,0,0,\cos{t},\sin{t})\,:\,t\in\R\}\,.
\end{equation}
By Theorem \ref{theo:rep}, there exists $q\in  P(g)[x,\bar \lambda]$ such that $f-q$ vanishes on 
\begin{equation}
(S(g)\times\R^{m+1})\cap V(h_\text{FJ})=\{(0,0,0,\cos{t},\sin{t})\,:\,t\in\R\}\,.
\end{equation}
It is not hard to take $q=0$.
\end{example}

The following lemma shows a case where $C^+(g)$ is a singleton:

\begin{example}\label{exam:nonkkt}
Consider the problem \eqref{eq:pop} with $n=2$, $m=3$, $f=x_1-1$ and $g=(g_1,g_2,g_3)=(x_1,x_2,(x_1-1)^3-x_2)$. 
It is easy to check that $S(g)$ is non-convex and $f^\star=0$.
Moreover, $(1,0)$ is the unique global minimizer for this problem, and Karush--Kuhn--Tucker conditions do not hold for \eqref{eq:pop} at this point.
The condition $\rank^+(\varphi^g(x))< m$ can be expressed as
\begin{equation}
\begin{cases}
\exists \lambda\in \R_+^3\backslash\{0\}\,:\,\sum_{j=1}^3\lambda_j\nabla g_j(x)=\lambda_1\begin{bmatrix}
1\\
0
\end{bmatrix}+\lambda_2\begin{bmatrix}
0\\
1
\end{bmatrix}+\lambda_3\begin{bmatrix}
3(x_1-1)^2\\
-1
\end{bmatrix}=0\,,\\
\lambda_1g_1(x)=\lambda_1 x_1=0\,,\,\lambda_2g_2(x)=\lambda_2 x_2=0\,,\,\lambda_3g_3(x)=\lambda_3 ((x_1-1)^3-x_2)=0\,.
\end{cases}
\end{equation}
It implies that $\lambda_3=\lambda_2$ so $\lambda_1+3\lambda_2(x_1-1)^2=0$. 
It follows that $\lambda_1=0$ and $\lambda_2(x_1-1)=0$.
If $\lambda_2=0$, then $\lambda=0$, and hence this is impossible.
Thus we get $\lambda_2>0$ which gives $x_1=1$ and $x_2=0$.
Thus $C^+(g)=\{(1,0)\}$ is a singleton then so is $f(C^+(g))$.
From this, Theorem \ref{theo:rep.plus} shows that there exists $q\in P(g)[x,\bar \lambda]$ with $\bar\lambda=(\lambda_0,\lambda_1,\lambda_2,\lambda_3)$ such that $f-q$ vanishes on $V(h_\text{FJ}^+)$ with $h_\text{FJ}^+$ defined as in \eqref{eq:.polyFJ.plus}, i.e.,
\begin{equation}
\begin{array}{rl}
h_\text{FJ}^+=&(\lambda_0^2\begin{bmatrix}
1\\
0
\end{bmatrix}-\lambda_1^2\begin{bmatrix}
1\\
0
\end{bmatrix}-\lambda_2^2\begin{bmatrix}
0\\
1
\end{bmatrix}-\lambda_3^2\begin{bmatrix}
3(x_1-1)^2\\
-1
\end{bmatrix},\\\\
&\lambda_1^2 x_1,\lambda_2^2x_2,\lambda_3^2 ((x_1-1)^3-x_2),1-\sum_{j=0}^3\lambda_j^2)\,.
\end{array}
\end{equation}
Let $(x,\bar\lambda)\in V(h_\text{FJ}^+)$. 
We get $\lambda_3^2=\lambda_2^2$, so $\lambda_0^2-\lambda_1^2-3\lambda_2^2(x_1-1)^2=0$ and $\lambda_1^2x_1=\lambda_2^2x_2=\lambda_2^2 ((x_1-1)^3-x_2)=0$.
If $\lambda_2=0$, then $\lambda_0^2=\lambda_1^2=\frac{1}{2}$ and $x_1=0$.
If $\lambda_2\ne 0$, then $x_2=0$, $x_1=1$ and $\lambda_1=\lambda_0=0$ which gives $\lambda_2^2=\lambda_3^2=\frac{1}{2}$.
Thus we obtain 
\begin{equation}
V(h_\text{FJ}^+)=\{(0,t,\frac{1}{\sqrt{2}}r_1,\frac{1}{\sqrt{2}}r_2,0,0),(1,0,0,0,\frac{1}{\sqrt{2}}r_1,\frac{1}{\sqrt{2}}r_2)\,:\,r_j=\pm 1\,,\,t\in\R\}\,.
\end{equation}
It is not hart to check that $q=(x_1-1)^3=g_2+g_3\in  Q(g)[x,\bar \lambda]\subset P(g)[x,\bar \lambda]$ and $f-q$ vanishes on $V(h_\text{FJ}^+)$.
\end{example}
\begin{remark}
From Lemma \ref{lem:convex.non-emptyinter} and Examples \ref{exam:infinite.critical},  \ref{exam:nonkkt}, if $C^+(g)$ is finite, then there are cases where $S(g)$ is convex, and there are cases where  $S(g)$ is non-convex.
This implies that the finiteness of $C^+(g)$ does not depend on the convexity of $S(g)$. 
Thus it is still open to find all explicit cases of $g$ where $C^+(g)$ is finite.
\end{remark}

Next, we show in the following example that Theorems \ref{theo:rep.variation}, \ref{theo:rep.plus} and \ref{theo:rep.plus2} are inapplicable to Example \ref{exam:infinite.critical}:

\begin{example}\label{exam:infinite.critical2}
Consider the problem \eqref{eq:pop} with $n=2$, $m=1$, $f=x_1+x_2^2$ and $g=(g_1)=(-x_1^2)$. 
Then the point $(0,0)$ is the unique global minimizer.
The condition $\rank^+(\varphi^g(x))< m$ can be expressed as
\begin{equation}
\exists \lambda_1\in \R_+\backslash\{0\}\,:\,\lambda_1\nabla g_1(x)=\lambda_1\begin{bmatrix}
-2x_1\\
0
\end{bmatrix}=0\,,\,\lambda_1g_1(x)=-\lambda_1 x_1^2=0\,.
\end{equation}
It is equivalent to $x_1=0$. 
Thus $C(g)=C^+(g)=S(g)=\{(0,t)\,:\,t\in\R\}$ is infinite, which implies that 
\begin{equation}
f(C(g))=f(C^+(g))=f(C(g)\cap S(g))=f(C^+(g)\cap S(g))=\{t^2\,:\,t\in\R\}=[0,\infty)
\end{equation}
is infinite.
\end{example}

We prove in the following example that the finiteness assumptions of $f(C^+(g))$ and $f(S(g)\cap C^+(g))$ in Theorems \ref{theo:rep.plus} and \ref{theo:rep.plus2} cannot be removed, respectively:
\begin{example}
Consider problem \eqref{eq:pop} with $n=2$, $m=3$, $f=x_1+x_2$, and $g=(g_1,g_2,g_3)=(x_1^3,x_2^3,-x_1x_2)$. 
It is easy to check that $S(g)=\{(0,t),(t,0)\,:\,t\ge 0\}$ and $f^\star=0$.
Moreover, $(0,0)$ is the unique global minimizer for this problem, and Karush--Kuhn--Tucker conditions do not hold for \eqref{eq:pop} at this point.
The condition $\rank^+(\varphi^g(x))< m$ can be expressed as
\begin{equation}
\begin{cases}
\exists \lambda\in \R_+^3\backslash\{0\}\,:\,\sum_{j=1}^3\lambda_j\nabla g_j(x)=\lambda_1\begin{bmatrix}
2x_1^2\\
0
\end{bmatrix}+\lambda_2\begin{bmatrix}
0\\
2x_2^2
\end{bmatrix}+\lambda_3\begin{bmatrix}
-x_2\\
-x_1
\end{bmatrix}=0\,,\\
\lambda_1g_1(x)=\lambda_1 x_1^3=0\,,\,\lambda_2g_2(x)=\lambda_2 x_2^3=0\,,\,\lambda_3g_3(x)=-\lambda_3 x_1x_2=0\,.
\end{cases}
\end{equation}
It implies that $\lambda_3x_1=\lambda_3x_2=\lambda_1x_1=\lambda_2x_2=0$. 
If $\lambda_3\ne 0$, then $x=0$.
If $\lambda_3=0$, we have either $x_1=0$ or $\lambda_1=0$.
If $\lambda_1=\lambda_3=0$, then $\lambda_2>0$ so $x_2=0$.
Thus $C^+(g)=\{(0,t),(t,0)\,:\,t\in\R\}$ is infinite then so are $f(C^+(g))$ and $f(S(g)\cap C^+(g))$.
Let $h_\text{FJ}^+$ be as in \eqref{eq:.polyFJ.plus}.
Then we get
\begin{equation}
\begin{array}{rl}
h_\text{FJ}^+=&(\lambda_0^2\begin{bmatrix}
1\\
1
\end{bmatrix}-\lambda_1^2\begin{bmatrix}
2x_1^2\\
0
\end{bmatrix}-\lambda_2^2\begin{bmatrix}
0\\
2x_2^2
\end{bmatrix}-\lambda_3^2\begin{bmatrix}
-x_2\\
-x_1
\end{bmatrix},\\\\
&\lambda_1^2 x_1^3,\lambda_2^2x_2^3,-\lambda_3^2 x_1x_2,1-\sum_{j=0}^3\lambda_j^2)\,.
\end{array}
\end{equation}
Let $(x,\bar\lambda)\in V(h_\text{FJ}^+)$.
Then we have $\lambda_1x_1=\lambda_2x_2=\lambda_0^2+\lambda_3^2x_2=\lambda_0^2+\lambda_3^2x_1=0$.
If $\lambda_3=0$, then $\lambda_0=\lambda_1x_1=\lambda_2x_2=0$, which implies $\lambda_1=0$ or $x_1=0$.
If $\lambda_3=\lambda_0=\lambda_1=0$, then $\lambda_2=\pm 1$, which yields $x_2=0$.
If $\lambda_3=\lambda_0=x_1=0$, then $\lambda_2=0$ or $x_2=0$.
If $\lambda_3\ne 0$, then $x_1=x_2=-\lambda_0^2/\lambda_3^2$ and $\lambda_0 \lambda_1=\lambda_0 \lambda_2=0$.
Thus we obtain 
\begin{equation}
\begin{array}{rl}
V(h_\text{FJ}^+)=&\{(t_1,0,0,0,\pm 1,0)\,,\,(0,t_2,0,\pm 1,0,0)\,,\\
&(0,0,0,\cos t_3,\sin t_3,0)\,,\,(-\frac{\mu_0^2}{\mu_3^2},-\frac{\mu_0^2}{\mu_3^2},\mu_0,\mu_1,\mu_2,\mu_3)\,:\\
&t_j\in\R\,,\,\mu_j\in\R\,,\,
\mu_3\ne0\,,\,\sum_{i=0}^3\mu_i^2=1\,,\,\mu_0 \mu_1=\mu_0 \mu_2=0\}\,.
\end{array}
\end{equation}
Let us prove that there does not exist $q\in P(g)[x,\bar \lambda]$ such that $f-q$ vanishes on $V(h_\text{FJ}^+)$.
Assume by contradiction that there is $\sigma_\alpha\in \Sigma^2[x,\bar \lambda]$ such that $f-\sum_{\alpha\in\{0,1\}^3}\sigma_\alpha g^\alpha$ vanishes on $V(h_\text{FJ}^+)$.
Since $(t,0,0,0,1,0)\in V(h_\text{FJ}^+)$ for all $t\in\R$, we get 
\begin{equation}
t-\sigma_{(0,0,0)}(t,0,0,0,1,0) -\sigma_{(1,0,0)}(t,0,0,0,1,0) t^3=0\,.
\end{equation}
Set $\psi_0(t)=\sigma_{(0,0,0)}(t,0,0,0,1,0)\in\Sigma^2[t]$ and $\psi_1(t)=\sigma_{(1,0,0)}(t,0,0,0,1,0)\in\Sigma^2[t]$.
Then it implies that $\psi_0=t(1-t^2\psi_1)$, yielding $\psi_0=t^2\hat \psi_0$ for some $\hat \psi_0\in \Sigma^2[t]$ (since $\psi_0\in\Sigma^2[t]$).
Thus, $t=\psi_0+t^3\psi_1=t^2(\hat \psi_0+t\psi_1)$, and hence this is impossible.
\end{example}
\subsection{Application to exact polynomial optimization}
In this subsection, we apply the representations stated in Section \ref{sec:rep.variation} for computing precisely the optimal value of a polynomial optimization problem using semidefinite programming.
 
We state in the following lemma an extension of the sixth statement of Lemma \ref{lem:mom.sos}:
\begin{lemma}\label{lem:mom.sos.variation}
Let $f,g_1,\dots,g_m,h_1,\dots,h_l\in\R[x]$. 
Let $\bar f^\star$ be as in \eqref{eq:pop.equality} with $g=(g_1,\dots,g_m)$ and $h=(h_1,\dots,h_l)$. 
Assume that there exists $q\in P(g)[x]$ such that $f-\bar f^\star-q$ vanishes on $V(h)\cap S(g)$.
Then there exists $k\in\N$ such that $\rho_k(f,\Pi g,h)=\bar f^\star$.
\end{lemma}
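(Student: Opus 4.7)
The plan is to reduce the statement to Nie's perturbation argument recalled in the proof of the sixth item of Lemma \ref{lem:mom.sos}. The only structural difference from that setting is that the polynomial $u := f - \bar f^\star - q$ now vanishes on $V(h) \cap S(g)$ rather than on $V(h)$, and $q$ lies in the preordering $P(g)[x] = Q(\Pi g)[x]$ rather than in the quadratic module $Q(g)[x]$. The first task, therefore, is to replace the real radical membership used by Nie with the right form of Krivine--Stengle's Nichtnegativstellensatz \cite{krivine1964anneaux} for a basic semi-algebraic set with equality constraints: a polynomial vanishes on $S(g) \cap V(h)$ if and only if there exist $r \in \N$ and $\sigma \in P(g)[x]$ such that $u^{2r} + \sigma \in I(h)[x]$.

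Once this certificate is in hand, I would carry out the $\varepsilon$-trick verbatim. Setting $c = 1/(2r)$, the polynomial $1 + t + c t^{2r}$ belongs to $\Sigma^2[t]$, so for every $\varepsilon > 0$ one can write
\begin{equation*}
f - \bar f^\star + \varepsilon = q + \varepsilon\left(1 + \frac{u}{\varepsilon} + c\left(\frac{u}{\varepsilon}\right)^{2r}\right) - c\varepsilon^{1-2r}\bigl(u^{2r} + \sigma\bigr) + c\varepsilon^{1-2r}\sigma .
\end{equation*}
The first and fourth summands belong to $Q(\Pi g)[x]$ because $q, \sigma \in P(g)[x] = Q(\Pi g)[x]$; the second is a sum of squares; the third lies in $I(h)[x]$ by the Krivine--Stengle certificate. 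Hence $f - \bar f^\star + \varepsilon \in Q(\Pi g)[x] + I(h)[x]$.

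The decisive point is that the degrees of all four summands are bounded independently of $\varepsilon$, since $r$, $q$, $\sigma$, and the representing element of $I(h)[x]$ are fixed. Therefore there is a single $k \in \N$ such that $f - \bar f^\star + \varepsilon \in Q_k(\Pi g)[x] + I_k(h)[x]$ for every $\varepsilon > 0$. This makes $\bar f^\star - \varepsilon$ a feasible value for the order-$k$ SOS relaxation \eqref{eq:sos.relax} with data $(f, \Pi g, h)$, so $\rho_k(f, \Pi g, h) \ge \bar f^\star - \varepsilon$ for every $\varepsilon > 0$, hence $\rho_k(f, \Pi g, h) \ge \bar f^\star$. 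Combining this with the upper bound $\rho_k(f, \Pi g, h) \le \bar f^\star$ provided by the second item of Lemma \ref{lem:mom.sos} (noting that $S(g) = S(\Pi g)$, so the feasible set of \eqref{eq:pop.equality} is unchanged when $g$ is replaced by $\Pi g$) gives the desired equality.

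I expect the main obstacle to be the first step: one must invoke the correct flavor of Krivine--Stengle so that the multiplier of the sum-of-squares-type part is genuinely an element of the preordering $P(g)[x]$ and not of some strictly larger cone, and so that the equality-defining generators contribute only to $I(h)[x]$. Once that algebraic certificate is established, the rest of the proof is a direct transcription of Nie's perturbation argument with $Q(g)[x]$ replaced by $Q(\Pi g)[x] = P(g)[x]$.
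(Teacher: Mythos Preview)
Your proposal is correct and follows essentially the same route as the paper's own proof: both invoke Krivine--Stengle's Nichtnegativstellensatz to obtain $u^{2r}+\sigma\in I(h)[x]$ with $\sigma\in P(g)[x]$, then run Nie's $\varepsilon$-perturbation identity verbatim with $P(g)[x]=Q(\Pi g)[x]$ in place of $Q(g)[x]$, and conclude via the uniform degree bound together with the second item of Lemma~\ref{lem:mom.sos}. Your added remark that $S(g)=S(\Pi g)$, justifying the use of that upper bound, is a detail the paper leaves implicit.
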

\begin{proof}
The proof is proved similar to the sixth statement of Lemma \ref{lem:mom.sos}.
It is sketched as follows: 
Let $u=f-\bar f^\star-q$.
By assumption,we get  $u=0$ on $S(g)\cap V(h)$. 
From this, Krivine--Stengle's Nichtnegativstellens\"atze \cite{krivine1964anneaux} say that there exists a positive integer $r$ and $w \in P(g)[x]$ such that $u^{2r} + w \in I(h)[x]$.
Let $c=\frac{1}{2r}$. 
Then it holds that $1+t+ct^{2r}\in\Sigma^2[t]$.
Thus for all $\varepsilon>0$, we have
\begin{equation}
f-\bar f^\star+\varepsilon=q + \varepsilon(1+\frac{u}{\varepsilon}+c\left(\frac{u}{\varepsilon}\right)^{2r})-c\varepsilon^{1-2r}(u^{2r} + w) +c\varepsilon^{1-2r}w\in P(g)[x]+I(h)[x]\,.
\end{equation}
Moreover, the degree of the right-hand side has an upper bound independent from $\varepsilon$.
This implies that there exists $k\in\N$ such that for all $\varepsilon>0$, $f-\bar f^\star+\varepsilon \in P_k(g)[x]+I_k(h)[x]$.
Then we for all $\varepsilon>0$, $\bar f^\star-\varepsilon$ is a feasible solution of \eqref{eq:equi.sos} of the value $\rho_k(f,\Pi g,h)$.
It gives $\rho_k(f,\Pi g,h)\ge \bar f^\star-\varepsilon$, for all $\varepsilon>0$, and, in consequence, we get $\rho_k(f,\Pi g,h)\ge \bar f^\star$.
Using the second statement of Lemma \ref{lem:mom.sos}, we obtain that $\rho_k(f,\Pi g,h)= \bar f^\star$, yielding the result.
\end{proof}
\begin{remark}
In Lemma \ref{lem:mom.sos}, if $m\ge 2$, and there exists $q\in Q(g)[x]$ such that $f-\bar f^\star-q$ vanishes on $V(h)\cap S(g)$, we are not sure that $\rho_k(f, g,h)=\bar f^\star$ for some $k\in\N$.
\end{remark}
As a consequence of Lemma \ref{lem:FJ}, the following lemma is obtained:
\begin{lemma}\label{lem:equi.prob.variation}
Let $f,g_1,\dots,g_m\in\R[x]$. 
Let $f^\star$ be as in problem \eqref{eq:pop} with $g=(g_1,\dots,g_m)$.
Let $h_\text{FJ}^+$ be as in \eqref{eq:.polyFJ.plus}.
If problem \eqref{eq:pop} has a global minimizer, then it holds that
\begin{equation}\label{eq:eqi.POP.variation}
\begin{array}{rl}
f^\star=\min\limits_{x,\bar\lambda}& f(x)\\
\text{s.t.}& x\in S(g)\,,\,(x,\bar\lambda)\in V(h_\text{FJ}^+)\,.
\end{array}
\end{equation}
\end{lemma}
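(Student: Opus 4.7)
The plan is to prove the two inequalities that together yield the claimed equality, treating each separately.

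For the direction $f^\star \le \min$ in \eqref{eq:eqi.POP.variation}, I would start from any feasible point $(x,\bar\lambda)$ of the right-hand side problem. By definition of feasibility, $x \in S(g)$, so $x$ is feasible for problem \eqref{eq:pop} and hence $f(x) \ge f^\star$. Taking the infimum over all such pairs (which is attained, as the next step will show) gives $\min \ge f^\star$. This step is essentially a projection argument and requires no delicate analysis, since the $\bar\lambda$ variables do not enter the objective.

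For the reverse direction $f^\star \ge \min$, I would invoke the hypothesis that problem \eqref{eq:pop} admits a global minimizer $u \in S(g)$ with $f(u) = f^\star$. Since every global minimizer is in particular a local minimizer, Lemma~\ref{lem:FJ} yields real numbers $\lambda_0,\dots,\lambda_m$ satisfying the right-hand form of the Fritz John conditions at $u$, namely
\begin{equation*}
\lambda_0^2 \nabla f(u) = \sum_{j=1}^m \lambda_j^2 \nabla g_j(u), \quad \lambda_j^2 g_j(u) = 0 \text{ for } j=1,\dots,m, \quad \sum_{j=0}^m \lambda_j^2 = 1.
\end{equation*}
Setting $\bar\lambda := (\lambda_0,\dots,\lambda_m)$, these identities say precisely that $(u,\bar\lambda) \in V(h_\text{FJ}^+)$ for $h_\text{FJ}^+$ given by \eqref{eq:.polyFJ.plus}. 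Combined with $u \in S(g)$, the pair $(u,\bar\lambda)$ is feasible for the right-hand problem and has objective value $f(u)=f^\star$. In particular the minimum is attained and does not exceed $f^\star$.

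Combining the two inequalities gives the equality claimed, with the minimum attained at $(u,\bar\lambda)$. The only real content of the argument lies in correctly using the second (squared-multiplier) form of the Fritz John conditions from Lemma~\ref{lem:FJ} to match the definition of $h_\text{FJ}^+$; this is the natural companion of Lemma~\ref{lem:equi.prob}, where the unsquared form matched $h_\text{FJ}$. There is no serious obstacle to overcome, since both directions are a direct consequence of Lemma~\ref{lem:FJ}.
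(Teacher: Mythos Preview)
Your proposal is correct and matches the paper's approach: the paper simply states that Lemma~\ref{lem:equi.prob.variation} is a consequence of Lemma~\ref{lem:FJ}, and your two-inequality argument spells out precisely this consequence, correctly using the squared-multiplier form of the Fritz John conditions to land in $V(h_\text{FJ}^+)$.
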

We present  in the following two theorems the main application of Theorems \ref{theo:rep.plus} and \ref{theo:rep.plus2} to polynomial optimization:
\begin{theorem}\label{theo:pop.variation}
Let $f,g_1,\dots,g_m\in\R[x]$. 
Let $f^\star$ be as in problem \eqref{eq:pop} with $g=(g_1,\dots,g_m)$.
Let $h_\text{FJ}$ be as in \eqref{eq:.polyFJ} and $h_\text{FJ}^+$ be as in \eqref{eq:.polyFJ.plus}.
Assume that problem \eqref{eq:pop} has at least one global minimizer.
Let $\Pi g$ be as in \eqref{eq:prod.g}.
Then the following statements hold:
\begin{enumerate}
\item If $f(C(g)\cap S(g))$ is finite, there exists $k\in\N$ such that $\rho_k(f,\Pi g,h_\text{FJ})=f^\star$.
\item If $f(C^+(g)\cap S(g))$ is finite, there exists $k\in\N$ such that $\rho_k(f,\Pi g,h_\text{FJ}^+)=f^\star$.
\end{enumerate}
\end{theorem}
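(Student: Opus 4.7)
The plan is to reduce each statement to an application of Lemma \ref{lem:mom.sos.variation} after producing a suitable polynomial representation via the variational Nichtnegativstellens\"atze of Section \ref{sec:rep.variation}. Throughout, I will work in the enlarged polynomial ring $\R[x,\bar\lambda]$, and regard the inequality constraints $g_1,\dots,g_m$ (which only involve $x$) as constraints in this larger ring; the semi-algebraic set they cut out is then $S(g)\times\R^{m+1}$.

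For the first statement, I would first note that since problem \eqref{eq:pop} admits a global minimizer, Lemma \ref{lem:equi.prob} gives
\begin{equation*}
f^\star=\inf\bigl\{f(x)\,:\,(x,\bar\lambda)\in(S(g)\times\R^{m+1})\cap V(h_\text{FJ})\bigr\}\,.
\end{equation*}
Thus, writing $\bar f^\star:=f^\star$ with the constraint set $S(g)\times\R^{m+1}$ combined with $V(h_\text{FJ})$, the shifted polynomial $f-f^\star$ is non-negative on $S(g)$, and the set $(f-f^\star)(C(g)\cap S(g))$ is finite iff $f(C(g)\cap S(g))$ is, i.e. iff our hypothesis holds. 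Theorem \ref{theo:rep.variation} then yields some $q\in P(g)[x,\bar\lambda]$ such that $(f-f^\star)-q$ vanishes on $(S(g)\times\R^{m+1})\cap V(h_\text{FJ})$. Since $P(g)[x,\bar\lambda]=Q(\Pi g)[x,\bar\lambda]$, this is exactly the hypothesis of Lemma \ref{lem:mom.sos.variation} applied in $\R[x,\bar\lambda]$ with constraints $\Pi g$ (inequalities) and $h_\text{FJ}$ (equalities). The conclusion of that lemma gives $k\in\N$ with $\rho_k(f,\Pi g,h_\text{FJ})=f^\star$, proving (1).

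The second statement follows by the same template with the squared-multiplier objects. I would invoke Lemma \ref{lem:equi.prob.variation} in place of Lemma \ref{lem:equi.prob} to rewrite $f^\star$ as a minimum over $(S(g)\times\R^{m+1})\cap V(h_\text{FJ}^+)$; then apply Theorem \ref{theo:rep.plus2} to $f-f^\star$ (whose image under $f-f^\star$ of $C^+(g)\cap S(g)$ is finite by assumption) to obtain $q\in P(g)[x,\bar\lambda]$ with $(f-f^\star)-q$ vanishing on $S(g)\cap V(h_\text{FJ}^+)$ (with the same convention $S(g)=S(g)\times\R^{m+1}$ in $\R[x,\bar\lambda]$); finally apply Lemma \ref{lem:mom.sos.variation} with $h=h_\text{FJ}^+$ to conclude $\rho_k(f,\Pi g,h_\text{FJ}^+)=f^\star$ for some $k\in\N$.

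There is no real obstacle here beyond bookkeeping: the representation theorems of Section \ref{sec:rep.variation} are built precisely so that their vanishing sets coincide with the feasible sets arising from $S(g)$ intersected with the Fritz John varieties, and Lemma \ref{lem:mom.sos.variation} is the preordering-based analogue of the sixth statement of Lemma \ref{lem:mom.sos} and converts such a vanishing identity into finite convergence of $\rho_k(f,\Pi g,h)$. The mildly delicate point is to keep straight that the non-negativity in the hypothesis of Theorem \ref{theo:rep.variation} (respectively Theorem \ref{theo:rep.plus2}) is required on $S(g)$ rather than on the smaller variety, which is why shifting by $f^\star$ (the minimum over the original constraint set $S(g)$) is exactly what is needed, and that the reformulations of Lemmas \ref{lem:equi.prob} and \ref{lem:equi.prob.variation} rely on the existence of at least one global minimizer, which is why that hypothesis is assumed.
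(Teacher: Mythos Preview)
Your proposal is correct and follows essentially the same approach as the paper: the paper states that the proof ``relies on Theorems \ref{theo:rep.variation}, \ref{theo:rep.plus2} and Lemma \ref{lem:mom.sos.variation}'' and ``is similar to the one of Theorem \ref{theo:pop}'', and your write-up is precisely a detailed execution of that sketch, including the use of Lemmas \ref{lem:equi.prob} and \ref{lem:equi.prob.variation} to identify $f^\star$ with the constrained infimum so that Lemma \ref{lem:mom.sos.variation} applies.
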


\begin{remark}
Since $C^+(g)\cap S(g)\subset C^+(g)$, the second statement of Theorem \ref{theo:pop.variation} holds if $f(C^+(g))$ is finite.
\end{remark}

The proof of Theorem \ref{theo:pop.variation}, which relies on Theorems \ref{theo:rep.variation}, \ref{theo:rep.plus2} and Lemma \ref{lem:mom.sos.variation}, is similar to the one of Theorem \ref{theo:pop}.

\begin{theorem}\label{theo:pop2}
Let $f,g_1,\dots,g_m\in\R[x]$. 
Let $f^\star$ be as in problem \eqref{eq:pop} with $g=(g_1,\dots,g_m)$.
Let $h_\text{FJ}^+$ be as in \eqref{eq:.polyFJ.plus}.
Assume that problem \eqref{eq:pop} has at least one global minimizer and $S(g)$ satisfies the Archimedean condition.
If $f(C^+(g))$ is finite, there exists $k\in\N$ such that $\rho_k(f,g,h_\text{FJ}^+)=f^\star$.
Furthermore, if these exists $R>0$ such that $g_m=R-x_1^2-\dots-x_n^2$, for $k\in\N$ sufficiently large, the Slater condition holds for the SOS relaxation \eqref{eq:sos.relax} of order $k$ with $h=h_\text{FJ}$ or $h=h_\text{FJ}^+$.
\end{theorem}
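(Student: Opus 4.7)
The plan is to mimic the proof of Theorem \ref{theo:pop} and reduce the claim to (i) a Nichtnegativstellensatz of the flavor already established for $h_\text{FJ}^+$ and (ii) the sixth statement of Lemma \ref{lem:mom.sos} that converts such a representation into finite convergence of the SOS relaxations. First, I would invoke Lemma \ref{lem:equi.prob.variation} so that
\begin{equation*}
f^\star=\min\{f(x)\,:\,x\in S(g)\cap V_x(h_\text{FJ}^+)\}=\bar f^\star\quad\text{with}\quad h=h_\text{FJ}^+\,,
\end{equation*}
where $V_x$ denotes the projection of $V(h_\text{FJ}^+)\subset\R^{n+m+1}$ onto the $x$-coordinates; equivalently, $f^\star$ coincides with the minimum of $f$ over $S(g)\cap V(h_\text{FJ}^+)$ seen as a subset of $\R^{n+m+1}$, which puts it in the form treated by Lemma \ref{lem:mom.sos}.

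Next, since $f-f^\star$ is non-negative on $S(g)$ and $f(C^+(g))$ is finite, Theorem \ref{theo:rep.plus} (applied to $f-f^\star$) together with the Archimedean hypothesis yields $q\in Q(g)[x,\bar\lambda]$ such that $f-f^\star-q$ vanishes on $V(h_\text{FJ}^+)$. Because $S(g)$ is Archimedean, the larger constraint set $S(g)\times\R^{m+1}$ cut out with $h_\text{FJ}^+$ still has the Archimedean property in the variables $(x,\bar\lambda)$: indeed $\lambda_0^2+\dots+\lambda_m^2=1$ on $V(h_\text{FJ}^+)$ bounds the $\bar\lambda$-coordinates, so adding $g_{m+1}=R-\sum x_i^2\in Q(g)[x]$ we get that $(R+1)-\sum x_i^2-\sum\lambda_j^2 = g_{m+1}+(1-\sum\lambda_j^2)$ is congruent modulo $I(h_\text{FJ}^+)[x,\bar\lambda]$ to an element of $Q(g)[x,\bar\lambda]$. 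Feeding the representation of $f-f^\star$ into the sixth statement of Lemma \ref{lem:mom.sos} (with ground quadratic module $Q(g)[x,\bar\lambda]$ and ideal $I(h_\text{FJ}^+)[x,\bar\lambda]$) then produces $k\in\N$ with $\rho_k(f,g,h_\text{FJ}^+)=f^\star$.

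For the final Slater assertion, I would use the shape of $h_\text{FJ}^+$: its last coordinate is $b=1-\lambda_0^2-\dots-\lambda_m^2$, and the same computation as in the proof of Theorem \ref{theo:pop} gives
\begin{equation*}
g_m+b=(R+1)-x_1^2-\dots-x_n^2-\lambda_0^2-\dots-\lambda_m^2\,,
\end{equation*}
which is exactly the structural hypothesis of the fifth statement of Lemma \ref{lem:mom.sos}. That lemma directly yields strict feasibility of the SOS relaxation \eqref{eq:sos.relax} with $h=h_\text{FJ}^+$ for all sufficiently large $k$. The same argument with $h=h_\text{FJ}$ works verbatim because $h_\text{FJ}$ also ends with the entry $b=1-\lambda_0^2-\dots-\lambda_m^2$.

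The main obstacle I anticipate is the verification that passing from the representation of Theorem \ref{theo:rep.plus}, which lives in $Q(g)[x,\bar\lambda]$ modulo $I(h_\text{FJ}^+)[x,\bar\lambda]$, actually fits the Archimedean hypothesis required by Lemma \ref{lem:mom.sos}; the Archimedean property of $S(g)$ alone is not for the enlarged constraint system, but the normalization $\sum\lambda_j^2=1$ encoded in $h_\text{FJ}^+$ supplies exactly the missing bound on $\bar\lambda$, and this is what makes the reduction to the sixth statement of Lemma \ref{lem:mom.sos} legitimate. Everything else is a mechanical translation of the Theorem \ref{theo:pop} argument with $C(g),h_\text{FJ}$ replaced by $C^+(g),h_\text{FJ}^+$.
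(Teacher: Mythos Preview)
Your proposal is correct and follows essentially the same route as the paper, which simply says that the proof ``relies on Theorem \ref{theo:rep.plus} together with the fifth and sixth statements of Lemma \ref{lem:mom.sos}, [and] is similar to the one of Theorem \ref{theo:pop}.'' One minor remark: the ``main obstacle'' you raise about the Archimedean property of the enlarged system is a red herring, since the sixth statement of Lemma \ref{lem:mom.sos} (and its proof via Krivine--Stengle) does not require any Archimedean hypothesis---the Archimedean condition on $S(g)$ is only used upstream, in Theorem \ref{theo:rep.plus}, to place $q$ in $Q(g)[x,\bar\lambda]$ rather than $P(g)[x,\bar\lambda]$.
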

The proof of Theorem \ref{theo:pop2}, which relies on Theorem \ref{theo:rep.plus} together with the fifth and sixth statements of Lemma \ref{lem:mom.sos}, is similar to the one of Theorem \ref{theo:pop}.

Combining Lemmas  \ref{lem:rep.convex.nomempinter} and  \ref{lem:mom.sos.variation}, we obtain the following corollary:
\begin{corollary}\label{coro:ball.box.simplex.variation}
Let $f,g_1,\dots,g_m\in\R[x]$.
Let $f^\star$ be as in problem \eqref{eq:pop} with $g=(g_1,\dots,g_m)$.
Let $h_\text{FJ}^+$ be as in \eqref{eq:.polyFJ.plus}.
Assume that problem \eqref{eq:pop} has at least one global minimizer, each $g_j$ is concave, and $S(g)$ has a non-empty interior.
Then $S(g)$ is convex and there exists $k\in\N$ such that $\rho_k(f,\Pi g,h_\text{FJ}^+)=f^\star$ with $\Pi g$ being defined as in \eqref{eq:prod.g}.
Moreover, if $S(g)$ satisfies the Archimedean condition, there exists $k\in\N$ such that $\rho_k(f,g,h_\text{FJ}^+)=f^\star$.
\end{corollary}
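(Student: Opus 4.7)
The plan is to chain together Lemmas \ref{lem:convex.non-emptyinter}, \ref{lem:rep.convex.nomempinter}, \ref{lem:equi.prob.variation}, \ref{lem:mom.sos.variation}, and the sixth statement of Lemma \ref{lem:mom.sos}, following essentially the same blueprint as the proof of Theorem \ref{theo:pop.variation}, but specialized to the concave-constraint setting where $C^+(g)$ is automatically empty.

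First, I would invoke Lemma \ref{lem:convex.non-emptyinter}: since each $g_j$ is concave and $S(g)$ has non-empty interior, $S(g)$ is convex and $C^+(g) = \emptyset$. This is the initial structural observation that trivializes the finiteness hypothesis of the Nichtnegativstellensatz. Next, since problem \eqref{eq:pop} attains its minimum, the polynomial $f - f^\star$ is non-negative on $S(g)$, and Lemma \ref{lem:rep.convex.nomempinter} (whose hypothesis is now verified) produces $q \in P(g)[x, \bar\lambda]$ such that $(f - f^\star) - q$ vanishes on $V(h_\text{FJ}^+)$. This supplies the key representation needed to invoke the convergence result of the Moment-SOS hierarchy.

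To pass from this representation to the equality $\rho_k(f, \Pi g, h_\text{FJ}^+) = f^\star$, I would first reformulate \eqref{eq:pop} via Lemma \ref{lem:equi.prob.variation} as a problem over the augmented space $(x, \bar\lambda)$ with $g_j \ge 0$ as inequalities and $h_\text{FJ}^+ = 0$ as equalities. Then, since $P(g)[x, \bar\lambda] = Q(\Pi g)[x, \bar\lambda]$ and $(f - f^\star) - q$ vanishes on $V(h_\text{FJ}^+)$ (and in particular on $V(h_\text{FJ}^+) \cap (S(g) \times \R^{m+1})$), Lemma \ref{lem:mom.sos.variation} applied in the variables $(x, \bar\lambda)$ yields an integer $k$ with $\rho_k(f, \Pi g, h_\text{FJ}^+) = f^\star$. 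For the Archimedean part, Lemma \ref{lem:rep.convex.nomempinter} additionally delivers $q \in Q(g)[x, \bar\lambda]$; substituting the sixth statement of Lemma \ref{lem:mom.sos} in place of Lemma \ref{lem:mom.sos.variation} then gives $\rho_k(f, g, h_\text{FJ}^+) = f^\star$.

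The only subtle point is applying Lemmas \ref{lem:mom.sos.variation} and \ref{lem:mom.sos} in the enlarged variable space $(x, \bar\lambda)$. The inequality constraints $g_j(x) \ge 0$ depend only on $x$, while the equality constraints in $h_\text{FJ}^+$ couple both variable blocks; the lemmas were stated for a generic variable $x$, so one must rename variables and observe that $g_j$ may be viewed as elements of $\R[x, \bar\lambda]$ without altering the cones $P(g)$ or $Q(g)$. No real obstacle arises because the proofs of those lemmas are indifferent to which polynomials actually depend on all variables, so the argument goes through verbatim in the enlarged setting.
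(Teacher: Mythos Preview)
Your proposal is correct and follows essentially the same route as the paper, which simply records that the corollary is obtained by combining Lemmas~\ref{lem:rep.convex.nomempinter} and~\ref{lem:mom.sos.variation}. You spell out the intermediate steps more carefully---in particular the use of Lemma~\ref{lem:equi.prob.variation} to identify $\bar f^\star$ with $f^\star$, and the appeal to the sixth statement of Lemma~\ref{lem:mom.sos} (rather than Lemma~\ref{lem:mom.sos.variation}) for the Archimedean conclusion---but the underlying argument is the same.
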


\subsection{Numerical examples}

In this subsection we report numerical results produced by the SOS relaxations of the values $\rho_k(f,g,0)$ and $\rho_k(f,g,h_\text{FJ}^+)$ for problem \eqref{eq:pop}, where $h_\text{FJ}^+$ is defined as in \eqref{eq:.polyFJ.plus}. 
We use the same notation as in Section \ref{sec:num}.

Our test problem is taken from Example \ref{exam:nonkkt}, namely $n=2$, $m=3$, $f=x_1-1$ and $g=(g_1,g_2,g_3)=(x_1,x_2,(x_1-1)^3-x_2)$. 
It is clear that $f^\star=0$ which is attained at the unique global minimizer $(1,0)$ for problem \eqref{eq:pop}.
Moreover, the Karush--Kuhn--Tucker conditions do not hold at this minimizer.
By using \cite[Proposition 3.4]{nie2014optimality}, we get $\rho_k(f,g,0)< f^\star$ for all $k\in\N$.
As shown in Example \ref{exam:nonkkt}, there exists $q\in  Q(g)[x,\bar \lambda]$ and $f-q$ vanishes on $V(h_\text{FJ}^+)$.
From this, the sixth statement of Theorem \ref{lem:mom.sos} yields that  $\rho_d(f,g,h_\text{FJ}^+)=f^\star$ for some $d\in\N$.
Thus we obtain $\rho_k(f,g,0)<f^\star=\rho_d(f,g,h_\text{FJ}^+)$, for all $k\in\N$.
Since $h_\text{FJ}^+$ has sign symmetry at $\bar \lambda$, we use TSSOS \cite{wang2021tssos} to exploit this structure when computing $\rho_k(f,g,h_\text{FJ}^+)$.

We display the numerical results in Table \ref{tab:bench.variation}.
\begin{table}
\footnotesize
\caption{\footnotesize Lower bounds on $f^\star=0$}
\label{tab:bench.variation}
\begin{center}
\begin{tabular}{ cccc }
 \hline
\multicolumn{4}{c}{$\rho_k(f,g,h_\text{FJ}^+)$} \\
\hline
$k$& value & time & size\\
$3$ &$-0.38154$ & 0.09& (21,186,216,19)\\
$4$&$-0.01199$ & 0.38& (42,476,1056,54)\\
$5$ &$-0.00822$ & 2.07& (84,1110,4393,100)\\
\hline
\multicolumn{4}{c}{$\rho_k(f,g,0)$}\\
\hline
$k$&value & time & size \\
$13$&$-0.02419$& 1.27 & (105,378,1,4)\\
$14$&$-0.02254$& 2.08 & (120,435,1,4)\\
$15$&$-0.02254$& 3.13 & (136,496,1,4)\\
 \hline
\multicolumn{4}{c}{$\rho_k(f,(g,b),0)$} \\
\hline
$k$&value & time & size \\
$13$&$-0.01490$& 1.68 & (105,378,1,5)\\
$14$&$-0.01305$& 2.43 & (120,435,1,5)\\
$15$&$-0.01267$& 3.03 & (136,496,1,5)\\
\end{tabular}
\end{center}
\end{table}
It shows that the numerical value $-0.00822$ of $\rho_5(f,g,h_\text{FJ}^+)$ is the best lower bound on $f^\star$. 
It takes around $2$ seconds to obtain this numerical value. 
It is worth pointing out that the standard SOS relaxations of the values $\rho_k(f,g,0)$ and  $\rho_k(f,(g,b),0)$ cannot reach the bound $-0.00822$ in less than $3$ seconds in spite of using the additional ball constraint $b=2-x_1^2-x_2^2$.
Another observation is that the size of the SOS relaxations of the value $\rho_k(f,g,h_\text{FJ}^+)$ grows faster than the ones of the values $\rho_k(f,g,0)$ and  $\rho_k(f,(g,b),0)$ when $k$ increases.

\section{Representations with denominators}
\label{sec:rep.gen}
We close the paper with  the following Nichtnegativstellens\"atze:
\begin{theorem}\label{theo:rep2}
Let $f,g_1,\dots,g_m\in\R[x]$ such that $f$ is non-negative on $S(g)$  with $g:=(g_1,\dots,g_m)$. 
Then there exists $q\in P(g)[x,\bar \lambda]$ such that $\lambda_0(f-q)$ vanishes on $V(h_\text{FJ})$, where $\bar\lambda:=(\lambda_0,\dots,\lambda_m)$ and $h_\text{FJ}$ is defined as in \eqref{eq:.polyFJ}.
Moreover, if $S(g)$ satisfies the Archimedean condition, we can take $q\in Q(g)[x,\bar \lambda]$.
\end{theorem}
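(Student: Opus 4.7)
The plan is to adapt the proof of Theorem \ref{theo:rep}, replacing the variety $V(h_\text{FJ})$ by its open subset $A := V(h_\text{FJ}) \setminus \{\lambda_0 = 0\}$, on which the Lagrangian mean value argument goes through without any assumption on $f(C(g))$. A representation valid on $A$ gives $f = q$ wherever $\lambda_0 \ne 0$, and multiplication by $\lambda_0$ trivially handles the complementary piece $V(h_\text{FJ}) \cap \{\lambda_0 = 0\}$.

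First, I would show that $f$ is constant on every semi-algebraically path connected component of $A$. By Lemma \ref{lem:diff.connect} applied to the difference $V(h_\text{FJ}) \setminus V(\lambda_0)$, there are only finitely many such components. The Lagrangian $L = f - \sum_{j=1}^m (\lambda_j/\lambda_0) g_j$ is well-defined on $A$ and equal to $f$ there, and the computation from the proof of Lemma \ref{lem:constant}, combining $\lambda_0 \nabla f = \sum_j \lambda_j \nabla g_j$ with the differentiated complementarity $\dot\lambda_j g_j + \lambda_j \nabla g_j \cdot \dot x = 0$, shows that $L \circ \phi$ has zero subgradient along any semi-algebraic path $\phi$ in $A$; Lemma \ref{lem:mean.val} then forces $f$ constant on the component. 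Since the path is confined to $\{\lambda_0 \ne 0\}$, the intervals of type $[a_j,b_j]$ appearing in the proof of Lemma \ref{lem:constant}, which required the $f(C(g))$-finiteness hypothesis, never arise. In particular, $f(A)$ is finite.

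Next I would invoke Lemma \ref{lem:quadra.variation} with $W = V_\C(h_\text{FJ})$ and the semi-algebraic set $A$ to obtain pairwise disjoint complex varieties $W_1, \dots, W_r \subset V_\C(h_\text{FJ})$, each defined by polynomials in $\R[x,\bar\lambda]$, on which $f$ is constant and that satisfy $(W_1 \cup \dots \cup W_r) \cap A = A$. An application of Lemma \ref{lem:const.func} in the ambient space $\R^{n+m+1}$, with $S(g)$ interpreted as $S(g) \times \R^{m+1}$ (valid since $f$ depends only on $x$), then produces $q \in P(g)[x,\bar\lambda]$ such that $f - q$ vanishes on $(W_1 \cup \dots \cup W_r) \cap \R^{n+m+1} \supseteq A$. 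Hence $\lambda_0(f - q)$ vanishes on all of $V(h_\text{FJ})$: on $A$ because $f = q$, and on $V(h_\text{FJ}) \cap \{\lambda_0 = 0\}$ because of the $\lambda_0$ factor. For the Archimedean refinement I would follow the end of the proof of Theorem \ref{theo:rep}: augment the constraint family by $b := 1 - \sum_{j=0}^m \lambda_j^2$, so that $S(g,b)$ becomes Archimedean via \eqref{eq:large.ball}; the Archimedean form of Lemma \ref{lem:const.func} delivers $p \in Q(g,b)[x,\bar\lambda]$ with the same vanishing property, and peeling off the $b$-term gives $q = \sigma_0 + \sum_{j=1}^m \sigma_j g_j \in Q(g)[x,\bar\lambda]$ that coincides with $p$ on $V(h_\text{FJ})$.

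The main technical obstacle is the first step: verifying that the Lagrangian mean value computation from Lemma \ref{lem:constant} genuinely decouples from the $f(C(g))$-finiteness hypothesis once paths are restricted to the open set $A$. The conceptual payoff is that the denominator $\lambda_0$ absorbs precisely the portion of the variety where the finiteness assumption was previously needed, so no hypothesis on $f(C(g))$ is required here.
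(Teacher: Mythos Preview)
Your proposal is correct and follows essentially the same route as the paper's proof: the paper also restricts to $V(h_\text{FJ})\setminus\{\lambda_0=0\}$, uses the Lagrangian mean-value argument (packaged there as Lemma~\ref{lem:constant2}) to conclude $f$ has finitely many values on this set, builds the disjoint varieties $W_j=V_\C(h_\text{FJ},f-t_j)$ explicitly (where you instead invoke Lemma~\ref{lem:quadra.variation}), applies Lemma~\ref{lem:const.func} with the auxiliary constraint $b=1-\sum_j\lambda_j^2$, and finishes by multiplying through by $\lambda_0$. The only cosmetic difference is that the paper carries $b$ in the preordering case as well before stripping it off, whereas you do so only in the Archimedean refinement; both work.
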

The proof of Theorem \ref{theo:rep2}, which has the same idea as Theorem \ref{theo:rep}, is postponed to below.
Although the representations in Theorem \ref{theo:rep2} have prescribed denominators, we are not required to make any assumption on the image of the set of critical points under $f$.

The following example illustrates the representations stated in Theorem \ref{theo:rep2}:
\begin{example}
Consider Example \ref{exam:infinite.critical} where $V(h_\text{FJ})=\{(0,t,0,\pm 1)\,:\,t\in\R\}$ which gives $\lambda_0(f-0)=0$ on $V(h_\text{FJ})$ since $\lambda_0=0$ on $V(h_\text{FJ})$.
Similar consideration applies to Example \ref{exam:counterexample}.
\end{example}
The following lemma is utilized to prove Theorem \ref{theo:rep2}:
\label{sec:proof.rep2}
\begin{lemma}\label{lem:constant2}
Let $f,g_1,\dots,g_m\in\R[x]$ such that $f(C(g))$ with $g:=(g_1,\dots,g_m)$ is finite.
Let $h_\text{FJ}$ be defined as in \eqref{eq:.polyFJ}.
Let $W$ be a semi-algebraically path connected component of $V(h_\text{FJ})\backslash \{\lambda_0=0\}$. Then $f$ is constant on $W$.
\end{lemma}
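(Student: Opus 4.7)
The plan is to reuse the ``$\lambda_0 \neq 0$'' branch of the argument already carried out in Lemma \ref{lem:constant}, which becomes the \emph{entire} argument here because the component $W$ is disjoint from $\{\lambda_0=0\}$ by hypothesis. In particular the finiteness of $f(C(g))$ will not really enter the proof: in Lemma \ref{lem:constant} it was needed only to dispose of the subintervals on which $\lambda_0$ vanished, and those subintervals are absent now.

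First I would pick two points $(x^{(0)},\bar\lambda^{(0)}),(x^{(1)},\bar\lambda^{(1)})\in W$ and, using semi-algebraic path connectedness, join them by a continuous semi-algebraic path $\phi(\tau)=(x(\tau),\bar\lambda(\tau))$ lying in $W$. Since $\lambda_0(\tau)\neq 0$ throughout $[0,1]$, the Lagrangian
\[
L(x,\bar\lambda)=f(x)-\sum_{j=1}^m\frac{\lambda_j}{\lambda_0}g_j(x)
\]
is well-defined on $W$, and on $V(h_\text{FJ})\setminus\{\lambda_0=0\}$ it coincides with $f$ because of the complementary slackness relation $\lambda_jg_j(x)=0$. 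Thus $(L\circ\phi)(\tau)=f(x(\tau))$ on the whole interval $[0,1]$. By Lemma \ref{lem:composit.semi-al} the composition $L\circ\phi$ is semi-algebraic, and by Lemma \ref{lem:semial.func.anal} it is continuous and piecewise real-analytic on $[0,1]$.

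The core step is to verify that $(L\circ\phi)'(\tau)=0$ on each open analytic subinterval. Using the Fritz John identity $\lambda_0\nabla f=\sum_j\lambda_j\nabla g_j$ valid on $V(h_\text{FJ})$, the chain rule gives
\[
\frac{d}{d\tau}f(x(\tau))=\nabla f(x(\tau))^\top x'(\tau)=\sum_{j=1}^m\frac{\lambda_j(\tau)}{\lambda_0(\tau)}\frac{d}{d\tau}g_j(x(\tau)).
\]
On any connected analytic subinterval the identity $\lambda_j(\tau)g_j(x(\tau))\equiv 0$, combined with the identity theorem for real-analytic functions, forces either $\lambda_j\equiv 0$ or $g_j\circ x\equiv 0$ on that piece; in either case the $j$-th summand vanishes. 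Hence $(L\circ\phi)'\equiv 0$ on each differentiable piece, so Lemma \ref{lem:mean.val} applied to $L\circ\phi$ yields $(L\circ\phi)(0)=(L\circ\phi)(1)$, i.e., $f(x^{(0)})=f(x^{(1)})$.

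The main obstacle I anticipate is the analytic-dichotomy step inside the chain-rule computation: one must spell out carefully that the real-analytic factors of the product $\lambda_j\cdot(g_j\circ x)\equiv 0$ on a connected open interval must separate by the identity theorem for real-analytic functions, so that every summand in the Lagrangian derivative vanishes. Once that subtlety is handled, everything else is a direct specialization of the machinery already developed for Lemma \ref{lem:constant}.
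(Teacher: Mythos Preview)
Your proof is correct and follows essentially the same route as the paper's: connect two points of $W$ by a continuous semi-algebraic path, observe that the Lagrangian $L$ coincides with $f$ along the path, argue that $(L\circ\phi)'\equiv 0$ on each analytic piece, and conclude via Lemma~\ref{lem:mean.val}. The paper simply asserts that $L\circ\phi$ ``has zero subgradient'' without elaboration, so your identity-theorem argument (on each analytic subinterval $\lambda_j\cdot(g_j\circ x)\equiv 0$ forces one factor to vanish identically) actually supplies the missing justification for that step; your remark that the finiteness of $f(C(g))$ plays no role here is also correct---the hypothesis is vestigial in this lemma.
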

\begin{proof}
Choose two arbitrary points $(x^{(0)},\bar \lambda^{(0)})$, $(x^{(1)},\bar \lambda^{(1)})$ in $W$. 
We claim that $f(x^{(0)}) = f(x^{(1)})$.
By assumption, there exists a continuous piecewise-differentiable path $\phi(\tau) = (x(\tau), \bar \lambda(\tau))$, for $\tau\in[0,1]$, lying inside $W$ such that $\phi(0) = (x^{(0)},\bar \lambda^{(0)})$ and $\phi(1) = (x^{(1)},\bar \lambda^{(1)})$ (see, e.g., \cite[Theorem 1.8.1]{pham2016genericity}). 
The Lagrangian function $L(x,\bar \lambda)$ defined in \eqref{eq:Lagran}
is equal to $f(x)$ on $V(h_\text{FJ})\backslash \{\lambda_0=0\}$, which contains $\phi([0,1])$.
By Lemma \ref{lem:composit.semi-al}, the function $L\circ \phi$ is semi-algebraic.
Moreover, the function $L\circ \phi$ is continuous since $L$ and $\phi$ are continuous.
It implies that $L\circ \phi$ is a continuous piecewise-differentiable function thanks to Lemma \ref{lem:semial.func.anal}.
Note that the function
$L\circ \phi$
has zero subgradient on $[0,1]$.
From Lemma \ref{lem:mean.val}, it follows that $f(x (0) )=(L\circ \phi)(0)= (L\circ \phi)(1)= f(x (1) )$.
We now obtain $f (x^{(0)})$ = $f (x^{(1)})$ and hence that $f$ is constant on $W$.
\end{proof}

\subsubsection*{Proof of Theorem \ref{theo:rep2}}
\begin{proof}
Using Lemma \ref{lem:diff.connect}, we decompose $V(h_\text{FJ})\backslash \{\lambda_0=0\}$ into semi-algebraically path connected components:
$Z_1,\dots,Z_s$.
from this, Lemma \ref{lem:constant} yields that $f$ is constant on $Z_i$, which implies that $f(V(h_\text{FJ})\backslash \{\lambda_0=0\})$.
We write
\begin{equation}\label{eq:image.real2}
f(V(h_\text{FJ})\backslash \{\lambda_0=0\}) = \{t_1 ,\dots, t_r \} \subset \R\,,
\end{equation}
where $t_i\ne t_j$ if $i\ne j$.
For $j=1,\dots,r$, set
$W_j:=V_\C(h_\text{FJ},f-t_j)$.
Then $W_j$ is a complex variety defined by finitely many polynomials in $\R[x]$.
We claim that $W_1,\dots,W_r$ are pairwise disjoint. 
Otherwise, let $(x,\bar\lambda)\in W_i\cap W_j$ with $i\ne j$. 
It implies that $t_i=f(x)=t_j$ which is impossible.
Let $U=W_1\cup\dots\cup W_r$.
We now prove that 
\begin{equation}\label{eq:sets.equ}
V(h_\text{FJ})\backslash \{\lambda_0=0\}=(U\backslash \{\lambda_0=0\})\cap \R^{n+m+1}\,.
\end{equation}
Let $(x,\bar\lambda)\in V(h_\text{FJ})\backslash \{\lambda_0=0\}\cap \R^{n}$. 
By \eqref{eq:image.real2}, there exists $j\in\{1,\dots,r\}$ such that $f(x)=t_j$.
It implies that $(x,\bar\lambda)\in W_j\subset U$ and so we get $(x,\bar\lambda)\in U\cap \R^{n+m+1}$. 
Thus $V(h_\text{FJ})\backslash \{\lambda_0=0\}\subset (U \backslash \{\lambda_0=0\})\cap \R^{n+m+1}$ since $(x,\bar\lambda)$ is arbitrary.
Conversely, suppose that $(x,\bar\lambda)\in (U\backslash \{\lambda_0=0\})\cap \R^{n+m+1}$. Then there is $j\in\{1,\dots,r\}$ such that $x\in W_j$.
It implies that $(x,\bar\lambda)\in V(h_\text{FJ})$ by the definition of $W_j$. 
Then $(x,\bar\lambda)\in V(h_\text{FJ})\backslash \{\lambda_0=0\}$.
Thus $(U\backslash \{\lambda_0=0\})\cap \R^{n+m+1}\subset V(h_\text{FJ})\backslash \{\lambda_0=0\}$ since $(x,\bar\lambda)$ is arbitrary.

Let $b=1-\lambda_0^2-\dots-\lambda_m^2$.
By the definition of $U$, Lemma \ref{lem:const.func} shows that there exists $p\in P(g,b)[x,\bar\lambda]$ such that $f-p$ vanishes on $U\cap\R^{n+m+1}$.
We write $p$ as in \eqref{eq:preo.rep.b}
for some $\sigma_\alpha,\psi_\beta\in\Sigma^2[x,\bar{\lambda}]$.
Let 
$q=\sum_{\alpha\in\{0,1\}^m}\sigma_\alpha g^\alpha\in P(g)[x,\bar\lambda]$.
Since $b=0$ on $V(h_\text{FJ})$, $f=p=q$ on $V(h_\text{FJ})\backslash \{\lambda_0=0\}$ thanks to \eqref{eq:sets.equ}.
Thus $\lambda_0(f-q)$ vanishes on $V(h_\text{FJ})$.

Assume that $S(g)$ satisfies the Archimedean condition. Then there exists $R>0$ such that $g_{m+1}=R-x_1^2-\dots-x_n^2\in Q(g)[x]$.
It implies that $S(g,b)$ with $b=1-\lambda_0^2+\dots-\lambda_m^2$ satisfies the Archimedean condition due to \eqref{eq:large.ball}.
By the definition of $U$ Lemma \ref{lem:const.func} yields that there exists $p\in Q(g,b)[x,\bar\lambda]$ such that $f-p$ vanishes on $U\cap\R^{n+m+1}$.
We write $p$ as in \eqref{eq:quadra.rep.b}
for some $\sigma_j\in\Sigma^2[x,\bar{\lambda}]$.
Let 
$q=\sigma_0+\sum_{j=1}^m\sigma_j g_j\in Q(g)[x,\bar\lambda]$.
Since $b=0$ on $V(h_\text{FJ})$, $f=p=q$ on $V(h_\text{FJ})\backslash \{\lambda_0=0\}$ thanks to \eqref{eq:sets.equ}.
Thus $\lambda_0(f-q)$ vanishes on $V(h_\text{FJ})$.
\end{proof}

\paragraph{Acknowledgements.}
The author was supported by the MESRI funding from EDMITT.
\bibliographystyle{abbrv}

\end{document}